\documentclass[a4paper, 10pt, oneside, onecolumn]{article}

\RequirePackage{geometry}
\geometry{twoside,
	paperwidth=210mm,
	paperheight=297mm,
	textheight=622pt,
	textwidth=468pt,
	centering,
	headheight=50pt,
	headsep=12pt,
	footskip=40pt,
	footnotesep=24pt plus 2pt minus 12pt,
	columnsep=2pc
}

\usepackage[utf8]{inputenc}
\usepackage[T1]{fontenc}
\usepackage{lmodern}
\usepackage{amsmath}
\usepackage{amssymb}
\usepackage{amsfonts}
\usepackage{amsthm}
\usepackage{xcolor}
\usepackage{hyperref}
\usepackage{cleveref}
\usepackage{mathtools}
\usepackage{constants}
\usepackage[autostyle=true]{csquotes}

\newconstantfamily{C}{symbol=C}

\hypersetup{
	colorlinks=false,
	pdfborder={0 0 0},
	pdftitle={Does strong repulsion lead to smooth solutions in a repulsion-attraction chemotaxis system even when starting with highly irregular initial data?},
	pdfauthor={Frederic Heihoff},
	pdfkeywords={},
	bookmarksopen=true,
}

\renewcommand{\phi}{\varphi}

\newtheorem{base}{Base}[section]
\numberwithin{equation}{section}

\theoremstyle{plain}
\newtheorem{theorem}[base]{Theorem}
\newtheorem{lemma}[base]{Lemma}

\newtheorem{corollary}[base]{Corollary}

\theoremstyle{definition}

\newcommand{\R}{\mathbb{R}}
\newcommand{\N}{\mathbb{N}}
\renewcommand{\d}{\,\mathrm{d}}
\newcommand{\laplace}{\Delta}
\newcommand{\grad}{\nabla}
\renewcommand{\div}{\nabla \cdot}
\renewcommand{\L}[1]{{L^{#1}(\Omega)}}

\newcommand{\defs}{\coloneqq}
\newcommand{\sfed}{\eqqcolon}
\newcommand{\stext}[1]{\;\;\text{ #1 }\;\;}
\newcommand{\eps}{\varepsilon}

\newcommand{\tmax}{{T_{\mathrm{max}}}}
\newcommand{\tmaxeps}{T_{\mathrm{max}, \eps}}

\newcommand{\ue}{u_\eps}
\newcommand{\ve}{v_\eps}
\newcommand{\we}{w_\eps}
\newcommand{\ze}{z_\eps}
\newcommand{\uet}{u_{\eps t}}
\newcommand{\vet}{v_{\eps t}}

\newcommand{\Mp}{\mathcal{M}_+(\overline{\Omega})}

\newcommand{\uparam}{{\kappa}}

\newcommand{\Capprox}{\widehat{C_{\mathrm{S2}}}}
\newif\ifclarification
\clarificationtrue

\allowdisplaybreaks

\newcommand\numberthis{\addtocounter{equation}{1}\tag{\theequation}}

\makeatletter
\g@addto@macro\bfseries{\boldmath}
\makeatother

\title{Does strong repulsion lead to smooth solutions in a repulsion-attraction chemotaxis system even when starting with highly irregular initial data?}
\author{
	Frederic Heihoff\footnote{fheihoff@math.uni-paderborn.de}\\
	{\small Institut f\"ur Mathematik, Universit\"at Paderborn,}\\
	{\small 33098 Paderborn, Germany}
}
\date{}
\begin{document}

\maketitle

\begin{abstract}
	\noindent
	It has been well established that, in attraction-repulsion Keller--Segel systems of the form
	\begin{equation*}
		\left\{
		\begin{aligned}
			u_t &= \Delta u - \chi \nabla \cdot (u\nabla v) + \xi \nabla \cdot (u\nabla w),\\
			\tau v_t &= \Delta v + \alpha u - \beta v, \\
			\tau w_t &= \Delta w + \gamma u - \delta w
		\end{aligned}
		\right.
	\end{equation*}
	in a smooth bounded domain $\Omega \subseteq \mathbb{R}^n$, $n\in\mathbb{N}$, with Neumann boundary conditions and parameters $\chi, \xi \geq 0$, $\alpha,\beta,\gamma,\delta > 0$ and $\tau \in \{0,1\}$, finite-time blow-up can be ruled out in many scenarios given sufficiently smooth initial data if the repulsive chemotaxis is sufficiently stronger than its attractive counterpart. In this paper, we will go - in a sense - a step further than this by studying the same system with initial data that could already be understood as being in a blown-up state (e.g.\ a positive Radon measure for the first solution component) and then ask the question whether sufficiently strong repulsion has enough of a regularizing effect to lead to the existence of a smooth solution, which is still connected to said initial data in a sensible fashion. Regarding this, we in fact establish that the construction of such a solution is possible in the two-dimensional parabolic-parabolic system and the two- and three-dimensional parabolic-elliptic system under appropriate assumptions on the interaction of repulsion and attraction as well as the initial data. 
	\\[0.5em]
	\textbf{Keywords:} attraction-repulsion; Keller--Segel; measure-valued initial data; smooth solution \\
	\textbf{MSC 2020:} 35Q92 (primary); 35K10;	35J15; 35K55; 35A09; 35B65; 92C17
\end{abstract}

\section{Introduction}
One of the fundamental questions in the mathematical analysis of partial differential equation models of chemotaxis (cf.\ \cite{BellomoMathematicalTheoryKeller2015}) used in the analysis of various biological organisms is whether the struggle between the forces of diffusion and chemotaxis ultimately leads to finite-time blow-up, regularization or even stabilization of solutions. Importantly, the interest in the detailed analysis of solution behavior of such models does not purely stem from mathematical curiosity about these often challenging systems, but also chiefly from a desire to see whether the models appropriately represent the observed behavior of the biological organisms that inspired them and thus could prove to be a useful tool in practical applications. 
\\[0.5em]
Most of the early analytical endeavors in this area, growing from the seminal paper by Keller and Segel in 1970 (cf.\ \cite{KellerInitiationSlimeMold1970}) and emboldened by the subsequent successful analysis of their proposed model (cf.\ \cite{BellomoMathematicalTheoryKeller2015}), have to our knowledge focused on the interplay of the regularizing effects of diffusion and the potentially destabilizing effects of attractive chemotaxis in various scenarios. Though more recently in an effort to e.g.\ understand Alzheimer's disease (cf.\ \cite{luca2003chemotactic}) or describe quorum-sensing effects observed in certain kinds of bacteria (cf.\ \cite{PainterVolumefillingQuorumsensingModels2002}), variations of the original Keller--Segel model have been introduced, which not only feature an attractive but also a repulsive chemical. A model of exactly this type will be the central object of interest in this paper. More precisely, we will be analyzing the system of partial differential equations
\begin{equation}\label{problem}
	\left\{
	\begin{aligned}
		u_t &= \laplace u - \chi \div (u\grad v) + \xi \div (u\grad w)&& \text{ on } \Omega \times (0,\infty), \\
		\tau v_t &= \laplace v + \alpha u - \beta v && \text{ on } \Omega \times (0,\infty), \\
		\tau w_t &= \laplace w + \gamma u - \delta w  && \text{ on } \Omega \times (0,\infty),\\
		0 &= \grad u \cdot \nu = \grad v \cdot \nu = \grad w \cdot \nu && \text{ on } \partial \Omega \times (0,\infty)
	\end{aligned}
	\right.
\end{equation}
in a smooth bounded domain $\Omega \subseteq \R^n$, $n\in\N$, with parameters $\chi,\xi \geq 0$, $\alpha,\beta,\gamma,\delta > 0$ and $\tau \in \{0,1\}$. Herein, the first equation models the movement of the organisms in question toward the attractive chemical represented by $v$ at rate $\chi$ and away from the repulsive chemical represented by $w$ at rate $\xi$. As seen in the second and third equation, both chemicals are produced by the organisms at rates depending on the choice of $\alpha$ and $\gamma$, respectively, and decay over time at rates $\beta$ and $\delta$, respectively. Notably, the second and third equation can either be of parabolic or elliptic type depending on the choice of parameter $\tau$. This choice is generally interpreted as either the chemicals conforming to a time evolution at similar time scales as the organisms in the parabolic case or the chemicals reacting almost immediately to changes in organism concentration in the elliptic case.
\paragraph{Prior work.}From an intuitive standpoint and in many cases very much by design, one would expect a sufficiently strong repulsive influence to counteract the aggregation behavior often underlying finite-time blow-up and thus leading more readily to the global existence of classical solutions. In fact given regular initial data, this intuition seems to be supported by prior mathematical analysis as it has been shown that, if the repulsive taxis in the system (\ref{problem}) is strictly stronger than its attractive counterpart in the sense that $\xi \gamma - \chi \alpha > 0$, then global classical solutions exist in two dimensions if $\tau = 1$ and in arbitrary dimension if $\tau = 0$ (cf.\ \cite{LiuGlobalBoundednessFully2015}, \cite{TaoCOMPETINGEFFECTSATTRACTION2013}). It has further been shown that under potentially additional parameter restrictions said solutions are even globally bounded or exhibit certain large-time stabilization properties (cf.\ \cite{JinBoundednessAttractionrepulsionKellerSegel2015}, \cite{JinGlobalStabilizationFull2020}, \cite{LinGlobalExistenceConvergence2016}, \cite{LiuPatternFormationAttractionrepulsion2013}).  Conversely if attraction dominates over repulsion, the finite-time blow-up results already established for attraction-only systems (cf.\ e.g.\ \cite{NagaiblowupRadiallySymmetric1995}, \cite{NagaiblowupNonradialSolutions2001}, \cite{WinklerAggregationVsGlobal2010}, \cite{WinklerFinitetimeblowupHigherdimensional2013}) seem to largely translate to the competition case (cf.\ \cite{JinBoundednessblowupCritical2016}, \cite{lankeitFinitetimeblowupThreedimensional2021}, \cite{TaoCOMPETINGEFFECTSATTRACTION2013}). Naturally apart from system (\ref{problem}), which stays fairly close to the original Keller--Segel system, many of its canonical variations have also been explored as well. To mention a few, there has been some consideration of models, in which the attractant is consumed instead of produced (cf.\ \cite{FrassuBoundednessNonlinearAttractionrepulsion2021}, \cite{FrassuBoundednessChemotaxisSystem2021}), in which the taxis mechanisms further interact with a logistic source term (cf.\ \cite{LiAttractionrepulsionChemotaxisSystem2016}, \cite{ZhangAttractionrepulsionChemotaxisSystem2016}), or in which the movement mechanisms feature some form of nonlinearity (cf.\ \cite{ChiyoBoundednessFullyParabolic2022}, \cite{FrassuBoundednessNonlinearAttractionrepulsion2021}, \cite{LinBoundednessBlowHigherdimensional2016}). Apart from this, there has also been some analysis of the interaction between attraction and consumption in the whole space case (cf.\ \cite{NagaiGlobalExistenceSolutions2021}, \cite{YamadaGlobalExistenceBoundedness2022}).
\\[0.5em]
Let us further briefly mention that there is another prominent setting which at times deals with interaction of attraction and repulsion, namely predator-prey models. Here, the predators, which are generally modeled by the first of two diffusive equations, are attracted by the prey. The prey in turn, which is modeled by the second of the aforementioned equations, is repelled by the predators. If both taxis mechanisms are present in such a setting however, the situation seems to be much less clear cut than in the attraction-repulsion model (\ref{problem}) as even the construction of (potentially only generalized) solutions seems to be rather challenging given that cross-diffusion is present in both equations (cf.\ \cite{FuestGlobalSolutionsHomogeneous2020}, \cite{FuestGlobalWeakSolutions2021}, \cite{TaoExistenceTheoryQualitative2020}). As such, most efforts in this area focus only on one of the two mechanisms and remove the other.
\paragraph{Main result.} 
As the boundary between finite-time blow-up and global existence for the system (\ref{problem}) has at this point been fairly thoroughly explored in the literature discussed above, we will in this paper go a step further in a sense by addressing the following question: If the system (\ref{problem}) already starts in a state resembling blow-up (e.g.\ a Dirac measure for the first solution component), can sufficiently strong repulsion be enough of a regularizing influence counteracting  attraction to still yield classical solutions, which remain connected to the initial state in a reasonable way? Notably, not only is it well-documented that such an immediate smoothing property is exhibited by the pure Neumann heat equation (cf.\ \cite{HenryGeometricTheorySemilinear1981}, \cite{LunardiAnalyticSemigroupsOptimal1995}), but very similar questions have been posed and positively answered for other cross-diffusion systems, providing us with a degree of optimism regarding this type of inquiry. To be more explicit, construction of such solutions has been accomplished in the parabolic-elliptic repulsive Keller--Segel system on two- and three-dimensional domains (cf.\ \cite{HeihoffExistenceGlobalSmooth2021}) as well as in the standard parabolic-parabolic attractive Keller--Segel system on one-dimensional domains (cf.\ \cite{WinklerInstantaneousRegularizationDistributions2019}) and on two-dimensional domains under the regularizing influence of a logistic source term (cf.\ \cite{LankeitIrregularInitialData}). Without the regularizing aid of such a logistic source, mild solutions, which become immediately bounded in $L^{\frac{3}{2}}(\Omega)$, have also been constructed for the attractive system on two-dimensional domains for measure-valued initial data with small mass or any $L^1(\Omega)$ initial data (cf.\ \cite{BilerLocalGlobalSolvability1998}).
There have also been some similar discussions in the two-dimensional whole space (cf.\ \cite{BedrossianExistenceUniquenessLipschitz2014}, \cite{BilerCauchyProblemSelfsimilar1995}, \cite{BilerExistenceSolutionsKellerSegel2015}, \cite{RaczynskiStabilityPropertyTwodimensional2009}), the radially symmetric (cf.\ \cite{WangImmediateRegularizationMeasuretype2021}, \cite{WinklerHowStrongSingularities2019}) as well as the toroidal case (cf.\ \cite{SenbaWeakSolutionsParabolicelliptic2002}).
\\[0.5em] 
As such in an effort to expand on this exact area of inquiry and in many ways as a direct extension of the results presented in \cite{HeihoffExistenceGlobalSmooth2021}, \cite{LiuGlobalBoundednessFully2015} as well as \cite{TaoCOMPETINGEFFECTSATTRACTION2013},
the main result of this paper is the construction of classical solutions to (\ref{problem}) with measure-valued initial data for the first solution component and, in the parabolic-parabolic case, appropriately regular initial data for the second and third solution component under sufficiently strong repulsion in the two-dimensional parabolic-parabolic and two- as well as three-dimensional parabolic-elliptic case. More precisely, we will prove the following result:
\begin{theorem}\label{theorem:main}
	Let $\Omega \subseteq \R^n$, $n \in \N$, be a bounded domain with a smooth boundary, $\chi, \xi \geq 0$ and $\alpha, \beta, \gamma, \delta > 0$ as well as $\tau \in \{0,1\}$. Let further $u_0 \in \Mp$ be an initial datum with $m \defs u_0(\overline{\Omega}) > 0$, where $\Mp$ is the set of positive Radon measures with the vague topology. If $\tau = 1$, let further $v_0, w_0 \in W^{1,r}(\Omega)$ with $r \in (\tfrac{6}{5},2)$ be some additional nonnegative initial data.
	If
	\begin{alignat}{3}
		\tau &= 1, \quad n &= 2 \stext{ and } \xi\gamma  - \chi\alpha  &\geq 0 \text{ or } \tag{S1}\label{scenario_1} \\
		\tau &= 0, \quad n &= 2 \stext{ and } \xi\gamma  - \chi\alpha  &\geq -\frac{C_{\mathrm{S2}}}{m} \text{ or }\tag{S2}\label{scenario_2} \\
		\tau &= 0, \quad n &= 3 \stext{ and } \xi\gamma  - \chi\alpha  &\geq 0 \stext{ as well as } u_0 \in L^\uparam(\Omega) \text{ with some } \uparam \in (1,2), \tag{S3}\label{scenario_3}
	\end{alignat}
	where $C_{\mathrm{S2}} > 0$ is a constant only depending on the domain $\Omega$, then there exist nonnegative functions $u \in C^{2,1}(\overline{\Omega}\times(0,\infty))$ and $v,w \in C^{2,\tau}(\overline{\Omega}\times(0,\infty))$ solving (\ref{problem}) classically and attaining their initial data in the following fashion:
	\begin{align}
		u(\cdot, t) &\rightarrow u_0 &&\text{ in } \Mp \label{u_continuity},\\
		v(\cdot, t) &\rightarrow v_0 &&\text{ in } W^{1,r}(\Omega) \text{ if } \tau = 1 \label{v_continuity}, \\
		w(\cdot, t) &\rightarrow w_0 &&\text{ in } W^{1,r}(\Omega) \text{ if } \tau = 1 \label{w_continuity} 
	\end{align}
	as $t \searrow 0$, where we interpret the functions $u(\cdot, t)$, $t > 0$, as the positive Radon measures $ u(x,t)\mathrm{d} x$ with $\mathrm{d} x$ being the standard Lebesgue measure on $\Omega$.
\end{theorem}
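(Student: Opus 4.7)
The plan is a regularization-approximation-compactness scheme. First, we mollify $u_0$ (extend by zero to $\R^n$, convolve with a standard smoothing kernel, restrict to $\overline\Omega$) to obtain smooth nonnegative $u_{0,\eps} \in C^\infty(\overline\Omega)$ with $u_{0,\eps} \to u_0$ vaguely in $\Mp$ as $\eps \searrow 0$ and mass converging to $m$; in scenario (\ref{scenario_3}) we additionally get $u_{0,\eps} \to u_0$ in $L^\uparam(\Omega)$. In the parabolic-parabolic case $v_0,w_0$ can be treated directly or after analogous mollification. Since each scenario's sign condition on $\xi\gamma-\chi\alpha$ lies inside the global existence regimes established in \cite{LiuGlobalBoundednessFully2015} and \cite{TaoCOMPETINGEFFECTSATTRACTION2013}, each approximate problem admits a global classical solution $(\ue,\ve,\we)$.

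The central task is then to obtain $\eps$-independent estimates on compact subsets of $\overline\Omega\times(0,\infty)$. A convenient device is to introduce the combined signal $\ze \defs \xi \we - \chi \ve$, which collapses the first equation into the single-signal form $\uet = \laplace \ue + \div(\ue \grad \ze)$, while the $\ze$-equation is driven by the source $(\xi\gamma-\chi\alpha)\ue$ up to benign linear terms in $\ve,\we$. Under (\ref{scenario_1}) and (\ref{scenario_3}) this source is nonnegative; under (\ref{scenario_2}) it is allowed to be mildly negative in a way that can be absorbed by the sharp two-dimensional Moser--Trudinger inequality, whose explicit mass-dependence is exactly what pins down the threshold $-C_{\mathrm{S2}}/m$. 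Testing the $\ue$-equation with $\log \ue$ (and in the parabolic-parabolic case also testing the $\ze$-equation with $\ue$ to absorb the nonlocal source) should yield $\eps$-uniform control on $\int_\Omega \ue \log \ue$ away from $t=0$, and the resulting bound can be upgraded to $L^p(\Omega)$-estimates for all $p<\infty$ via Moser--Trudinger in two dimensions or via Gagliardo--Nirenberg together with the assumed $L^\uparam$-regularity in three dimensions. Standard parabolic bootstrapping then yields higher-order $\eps$-uniform bounds on every $\overline\Omega\times[t_0,\infty)$ with $t_0>0$, from which Arzelà--Ascoli produces a classical limit $(u,v,w)$ on $\overline\Omega\times(0,\infty)$.

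To recover the initial trace, testing the weak form of the first equation against $\phi\in C^\infty(\overline\Omega)$ and using mass conservation leads to an inequality of the type
\[
	\left| \int_\Omega \ue(\cdot,t)\phi - \int_\Omega u_{0,\eps}\phi \right| \leq C(\phi)\Bigl(t + \int_0^t\!\!\int_\Omega \ue\, |\grad \ze|\Bigr),
\]
and if the right-hand side can be shown to vanish as $t\searrow 0$ uniformly in $\eps$ (via Cauchy--Schwarz combined with the entropy bound and standard $L^2$-bounds on $\grad \ze$), then passing to the limits $\eps\searrow 0$ and $t\searrow 0$ successively yields (\ref{u_continuity}). In the parabolic-parabolic case, Duhamel representations of $\ve,\we$ together with $L^1$-control of $\ue$ and standard heat semigroup $L^p$--$L^q$ estimates on the inhomogeneous part give (\ref{v_continuity}) and (\ref{w_continuity}).

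The main obstacle is transparent: because $u_0$ may carry atoms, $\int_\Omega u_{0,\eps}\log u_{0,\eps}\to\infty$ as $\eps\searrow 0$, so no $\eps$-uniform estimate can be propagated from $t=0$. Every useful bound must instead be of genuine smoothing type, with prefactors vanishing, or diverging only integrably, as $t\searrow 0$. Finding the right weighted testing functions and critical functional inequalities to close such an estimate in each of the three scenarios is where the real structural content of the theorem sits: the logarithmic nature of the critical Sobolev embedding in two dimensions is exactly what powers (\ref{scenario_1}) and (\ref{scenario_2}) and pins down the sharp constant $C_{\mathrm{S2}}$, and the failure of this logarithmic embedding in three dimensions is precisely what forces the additional hypothesis $u_0\in L^\uparam(\Omega)$ in (\ref{scenario_3}).
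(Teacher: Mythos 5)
Your overall architecture (smooth approximation of the data, global approximate solutions, the reduction $\ze=\xi\we-\chi\ve$, estimates that degrade as $t\searrow 0$, bootstrap and compactness away from $t=0$, and the initial trace via smallness of $\int_0^t\int_\Omega \ue|\grad\ze|$) coincides with the paper's, but the decisive steps are either left open or rest on tools that do not do the job. First, global existence of the approximate solutions is \emph{not} covered by the cited references in the range $\xi\gamma-\chi\alpha\le 0$: the results of Liu--Tao and Tao--Wang require strictly dominant repulsion, so the borderline case $\zeta=0$ and, above all, Scenario (\ref{scenario_2}) with slightly negative $\zeta$ need a dedicated argument (in the paper this is Lemma 2.1, and it is already here that a first restriction on $C_{\mathrm{S2}}$ is fixed). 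Second, the mechanism behind (\ref{scenario_2}) is not the sharp Moser--Trudinger inequality: the paper absorbs the bad term $-\tfrac{\zeta}{p}\int_\Omega \ue^{p+1}$ using a mass-weighted Gagliardo--Nirenberg inequality of the form $\int_\Omega \ue^{p+1}\le K m\int_\Omega|\grad\ue^{p/2}|^2+Km^{p+1}$ inside an $L^p$-testing scheme whose superlinear-decay ODE yields $\int_\Omega\ue^p\le Ct^{-n(p-1)/2}$; the constant $C_{\mathrm{S2}}$ comes from this Gagliardo--Nirenberg constant, and no sharpness is claimed or needed. Your entropy-based route (testing with $\log\ue$, upgrading to all $L^p$ via Moser--Trudinger) is not developed enough to replace this: an $L\log L$ bound does not upgrade to $L^p$ by Moser--Trudinger alone, and in (\ref{scenario_1}) the case $\zeta=0$ makes the entropy term disappear from the energy altogether, which the paper has to treat separately.

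The most serious gap is the uniform-in-$\eps$ smallness of $\int_0^t\|\ue(\cdot,s)\grad\ze(\cdot,s)\|_\L{1}\d s$, which you propose to obtain ``via Cauchy--Schwarz combined with the entropy bound and standard $L^2$-bounds on $\grad\ze$.'' This cannot close: the entropy bound does not control $\|\ue\|_\L{2}$, and for measure-type initial data there is no $\eps$-uniform bound on $\|\grad\ze\|_\L{2}$ up to $t=0$ (the baseline regularity is only $W^{1,p}$ with $p<\tfrac{n}{n-1}$ in the elliptic cases, respectively $W^{1,r}$ with $r<2$ in the parabolic case; the $L^2$-gradient bound from the energy decays like $t^{-\lambda}$). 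What actually makes the argument work in the paper are time-weighted dissipation estimates -- the $t^\lambda$-damped functional $\zeta\int_\Omega\ue\ln\ue+\tfrac12\int_\Omega|\grad\ze|^2$ yielding $\int_0^t s^\lambda\int_\Omega|\grad\ue|^2/\ue$ and $\int_0^t s^{2\lambda}\int_\Omega|\grad\ze|^4$, with $\lambda$ tuned to $r>\tfrac65$ -- combined with a Young splitting of $\ue|\grad\ze|$, plus a separate semigroup estimate $\|\grad\ze\|_\L{\infty}\lesssim t^{-1/r}$ when $\zeta=0$, and, in (\ref{scenario_2})/(\ref{scenario_3}), the decaying bounds $\|\ue\|_\L{p}\le Ct^{-n(p-1)/(2p)}$ for carefully chosen exponents ($p=\tfrac52$ paired with $\grad\ze\in L^{5/3}$, respectively $p=\tfrac{3\uparam}{4\uparam-3}$ paired with $W^{2,\uparam}\hookrightarrow W^{1,\frac{3\uparam}{3-\uparam}}$ -- this is exactly where the hypothesis $u_0\in L^\uparam(\Omega)$ enters, not through a failed logarithmic embedding). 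You correctly diagnose that all estimates must be of smoothing type with integrable degradation, but the construction of precisely these weighted estimates is the structural core of the proof and is missing from your proposal. Two further minor points: plain mollification after extension by zero neither preserves the mass $m$ exactly nor handles mass concentrated near $\partial\Omega$ (the paper prescribes $\int_\Omega u_{0,\eps}=m$ and, for (\ref{scenario_3}) and for $v_0,w_0$, uses heat-semigroup regularization, which is then exploited again in the proof of (\ref{v_continuity})--(\ref{w_continuity})); and in the trace argument the test function must satisfy $\grad\psi\cdot\nu=0$ on $\partial\Omega$ (with a density step for general $\phi\in C^0(\overline\Omega)$) before the Laplacian can be moved onto it.
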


\paragraph{Approach.} 
As is not uncommon, the construction of our desired solution will be based on approximating them by a family of solutions $(\ue, \ve, \we)_{\eps \in (0,1)}$, for which global existence is much easier to establish. To this end, we will spend the next section approximating our initial data by smooth functions in a fashion convenient for later arguments and then prove that with such smooth initial data classical solutions to (\ref{problem}) exist globally as an extension of the arguments presented in \cite{LiuGlobalBoundednessFully2015} and \cite{TaoCOMPETINGEFFECTSATTRACTION2013}. Additionally in this section, we also introduce the functions $\ze \defs \xi \we - \chi \ve$ for each $\eps \in (0,1)$, which allow us to transform the system (\ref{problem}) to the system (\ref{problem_simplified}), because the second system will prove more convenient for some of the later arguments. The remainder of this paper will then be devoted to deriving uniform a priori bounds for exactly these approximate solutions to facilitate a central compact embedding argument, which will serve as the source of our actual solutions, as well as to ensure that the thus constructed solutions have our desired continuity properties at $t=0$.
\\[0.5em]
Naturally any substantial a priori estimates have to necessarily decay toward $t = 0$ if they are to be uniform in the approximation parameter $\eps$ as our initial data are very irregular. As such, the a priori estimates serving as the linchpins of all further considerations will roughly have the form 
\begin{equation}\label{eq:linchpin_estimate1}
	G(\ue(\cdot, t),\ze(\cdot, t)) \leq C t^{-\lambda}	
\end{equation}
or 
\begin{equation}\label{eq:linchpin_estimate2}
	\int_0^t s^\lambda \, G(\ue(\cdot, s),\ze(\cdot, s)) \d s \leq C
\end{equation}
for some $\lambda > 0$, where $G$ is some key norm or functional and the parameter $\lambda$ represents how severely the estimate decays close to $t = 0$. The derivation of these estimates will take place in Section 3 and, while most of the arguments afterward will handle both the parabolic-parabolic and parabolic-elliptic cases in a fairly integrated fashion, said derivation will use very different methods depending on the value of $\tau$. 
\\[0.5em]
For the parabolic-elliptic case, we essentially adapt methods found in \cite{HeihoffExistenceGlobalSmooth2021} to the system (\ref{problem}). At its core, the argument boils down to testing the first equation in (\ref{problem}) with $\ue^{p-1}$ and then after two partial integrations directly replacing the resulting $\laplace \ve$ and $\laplace \we$ terms by lower order expressions using the elliptic second and third equations in (\ref{problem}). Applying carefully chosen interpolation inequalities as well as elliptic regularity theory to this then allows us to derive a differential inequality with super-linear decay for $\int_\Omega \ue^{p}$, which is sufficient to yield an estimate of the type (\ref{eq:linchpin_estimate1}) for $\int_\Omega \ue^p$ with any $p\in (1,\infty)$.
\\[0.5em]
In the parabolic-parabolic case, our approach hinges on the use of the well-known energy-type functional $\mathcal{F}(t) \defs \zeta\int_\Omega \ue \ln (\ue) + \frac{1}{2} \int_\Omega |\grad \ze|^2$ with $\zeta \defs \xi \gamma - \chi \alpha$. But as this functional cannot necessarily be uniformly bounded at $t = 0$ due to our initial data potentially not having finite energy, we further decouple it from the initial data by multiplying it with $t^{\lambda}$, $\lambda > 0$. Using testing based methods, analysis of this functional then not only yields an estimate of type (\ref{eq:linchpin_estimate1}) for the functional itself but crucially also a bound of type (\ref{eq:linchpin_estimate2}) for the dissipative terms $\int_\Omega |\laplace \ze|^2$ and $\zeta \int_\Omega |\grad \sqrt{\ue}|^2$. Notably while the first set of bounds could also be achieved by a similar super-linear decay approach as described in the previous paragraph, the latter bounds seem to be much more conveniently accessible by analyzing the aforementioned time-dampened version of the functional $\mathcal{F}$. And importantly, it is in fact exactly said latter bounds that will allow us to prove our desired continuity properties at $t=0$ in this scenario, as well as allow us to derive a set of uniform bounds for $\int_\Omega \ue^2$ away from $t=0$ to give us a similar starting point to the parabolic-elliptic case for the next section.
\\[0.5em]
In Section 4, we then use the uniform bounds for $\int_\Omega \ue^n$ away from $t=0$, which we have at this point established in all scenarios, as the basis for a bootstrap argument taking us all the way to uniform bounds for the first solution components in $C^{2+ \theta, 1+\frac{\theta}{2}}(\overline{\Omega}\times[t_0,t_1])$ and uniform bounds for the second and third solution components in $C^{2+ \theta, \tau+\frac{\theta}{2}}(\overline{\Omega}\times[t_0,t_1])$ with $t_1 > t_0 > 0$. We do this mostly using the variation-of-constants representation of the involved equations or corresponding elliptic regularity theory as well as fairly standard Hölder regularity theory from \cite{LadyzenskajaLinearQuasilinearEquations1988}, \cite{LiebermanHolderContinuityGradient1987} as well as \cite{PorzioVespriHoelder}. Due to the compact embedding properties of Hölder spaces this immediately allows us to construct our desired solutions $(u,v,w)$ as limits of the thus far discussed approximate ones and argue that they classically solve (\ref{problem}) as the resulting strong convergence properties safely transfer any solution properties from the approximate solutions to their limits.
\\[0.5em]
It thus only remains to be shown that said solutions are connected to our initial data in the fashion outlined in (\ref{u_continuity})--(\ref{w_continuity}), which will be the main subject of Section 5. To do this for the first solution component, we essentially start by proving that the approximate solutions are uniformly continuous at $t=0$ in the sense of (\ref{u_continuity}). We accomplish this by showing that the space-time integral $\int_0^t \|\ue(\cdot,s) \grad \ze(\cdot, s)\|_\L{1} \d s$ related to the chemotaxis mechanism becomes uniformly small as $t$ goes to zero in all scenarios. That it is possible to prove such a property chiefly depends on the degradation toward zero of our estimates derived in Section 2 to be sufficiently benign. Notably, the magnitude of the degradation parameters naturally depends on the regularity of our initial data in the sense that better regularity leads to smaller degradation toward zero and thus the derivation of the aforementioned bound is, however indirect, the source of our initial data regularity needs in \Cref{theorem:main}. We then use said property combined with the first equation in (\ref{problem}) and the fundamental theorem of calculus to gain our desired uniform continuity property, which by virtue of the already established convergence properties translates immediately to our actual solutions. Using a convenient property of our initial data approximation, a similar argument built on semigroup methods grants us properties (\ref{v_continuity}) and (\ref{w_continuity}) in the parabolic-parabolic case. 

\section{Regularized initial data and approximate solutions}

From here on out, we fix the system parameters $\chi, \xi \geq 0$, $\alpha, \beta, \gamma, \delta > 0$ and $\tau \in \{0,1\}$ as well as a domain $\Omega \subseteq \R^n$, $n\in\N$, with a smooth boundary for the remainder of the paper. We further fix some initial data $u_0 \in \Mp$ with $m \defs u_0(\overline{\Omega}) > 0$, where $\Mp$ is the set of positive Radon measures with the vague topology, as well as nonnegative $v_0,w_0 \in W^{1,r}(\Omega)$ with $r \in (\frac{6}{5}, 2)$ if $\tau = 1$. Moreover if $u_0$ is additionally assumed to be an element of $L^\uparam(\Omega)$ for some $\uparam\in(1,2)$, we also fix this parameter $\uparam$. Otherwise, we let $\uparam$ be equal to $2$ so $\uparam$ is defined in all scenarios for convenience of notation in some later arguments.
\\[0.5em]
To construct the solutions laid out in \Cref{theorem:main}, we will use a family of approximate solutions, which will later be argued to converge to our desired solutions. This section will thus be devoted to the construction of said approximate solutions and to facilitate this we will begin by approximating our potentially highly irregular initial data by smooth functions, which will in fact be the only regularization necessary. 
\\[0.5em] 
As such, we now fix a family of approximate positive initial data $(u_{0,\eps})_{\eps \in (0,1)} \subseteq C^{\infty}(\overline{\Omega})$ such that 
\begin{equation}\label{eq:approx_u0_properties}
	u_{0,\eps} \rightarrow u_0 \;\;\;\; \text{ in } \Mp \text{ as } \eps\searrow 0 \stext{ as well as } \int_\Omega	u_{0,\eps} = u_0(\overline{\Omega}) = m \;\;\;\;\text{ for all } \eps \in (0,1),
\end{equation}
where we interpret the functions $u_{0,\eps}$ as the positive Radon measures $u_{0,\eps}(x)\mathrm{d}x$ with $\mathrm{d}x$ being the standard Lebesgue measure on $\Omega$. For a discussion of how this can be achieved, we refer the reader to e.g.\ \cite[Remark 2.2]{HeihoffExistenceGlobalSmooth2021}.
If $u_0$ is additionally an element of $L^\uparam(\Omega)$, we let $u_{0,\eps} \defs e^{\eps\laplace}u_0 \in C^{\infty}(\overline{\Omega})$ for all $\eps \in (0,1)$ instead, where $(e^{t\laplace})_{t \geq 0}$ is the Neumann heat semigroup on $\Omega$. By the continuity, positivity and mass conservation properties of said semigroup, this not only directly ensures the previously prescribed properties but also that 
\begin{equation}\label{eq:approx_u0_higher_properties}
	u_{0,\eps} \rightarrow u_0 \;\;\;\;  \text{ in } L^\uparam(\Omega) \text{ as } \eps \searrow 0
\end{equation}
as well. Similarly if $\tau = 1$, we let \begin{equation}\label{eq:v0_w0_definition}
	v_{0,\eps} \defs e^{\eps(\laplace - \beta)}v_0 \defs e^{-\eps\beta}e^{\eps\laplace}v_0 \in C^\infty(\overline{\Omega})
	\stext{ and } w_{0,\eps} \defs e^{\eps(\laplace - \delta)}w_0 \defs e^{-\eps\delta}e^{\eps\laplace}w_0 \in C^\infty(\overline{\Omega})
\end{equation}
for all $\eps \in (0,1)$.
These approximate functions are nonnegative due to the maximum principle and have the following convergence property due to the continuity of the semigroup at $t = 0$:
\begin{equation}\label{eq:approx_vw0_properties}
	v_{0,\eps} \rightarrow v_0 \stext{ and } w_{0,\eps} \rightarrow w_0 \;\;\;\; \text{ in } W^{1,r}(\Omega) \text{ as } \eps\searrow 0.
\end{equation}
As a convenient by-product of this construction, we also gain that 
\begin{equation}\label{eq:approx_data_mass_conservation}
	\int_\Omega v_{0,\eps} \leq \int_\Omega v_0 \stext{ and } \int_\Omega w_{0,\eps} \leq \int_\Omega w_0
\end{equation}
again due to the mass conservation property of the Neumann heat semigroup.
\\[0.5em]
As it will not only be a useful tool in arguing that our approximate solutions are in fact global, but will also play a key part in many of our later derivations of a priori estimates, we will now introduce the following transformation for classical solutions to (\ref{problem}): For any such solution $(u,v,w)$, we let
\begin{equation}\label{eq:z}
	z(x,t) \defs \xi w(x,t) - \chi v(x,t) \stext{ and } \zeta \defs \xi\gamma  - \chi\alpha \in \R \stext{ as well as } \sigma \defs \chi(\beta-\delta) \in \R
\end{equation}
for all $x \in \overline{\Omega}$, $t \in [0,\infty)$. Then $(u,z,v)$  solves the related system 
\begin{equation}\label{problem_simplified}
	\left\{
	\begin{aligned}
		u_t &= \laplace u + \div (u\grad z) && \text{ on } \Omega \times (0,\infty), \\
		\tau z_t &= \laplace z - \delta z + \zeta u + \sigma v && \text{ on } \Omega \times (0,\infty), \\
		\tau v_t &= \laplace v + \alpha u - \beta v  && \text{ on } \Omega \times (0,\infty), \\
		0 &= \grad u \cdot \nu = \grad z \cdot \nu = \grad v \cdot \nu  && \text{ on } \partial \Omega \times (0,\infty).
	\end{aligned}
	\right.
\end{equation} 
In part using this, we can now formulate the necessary existence result for our approximate solutions.
\begin{lemma}\label{lemma:approx_exist}
	There exists $\Capprox > 0$ such that, if we assume (\ref{scenario_1}), (\ref{scenario_2}) with $C_{\mathrm{S2}} \leq \Capprox$ or (\ref{scenario_3}), then the following holds:
	\\[0.5em]
	For each $\eps \in (0,1)$, there exist a positive function $\ue  \in C^0(\overline{\Omega}\times[0,\infty))\cap C^{2,1}(\overline{\Omega}\times(0,\infty))$ and nonnegative functions $\ve, \we \in C^0(\overline{\Omega}\times[0,\infty))\cap C^{2,1}(\overline{\Omega}\times(0,\infty))$ such that  $(\ue, \ve, \we)$ is a classical solution to (\ref{problem}) with initial data $u_{0,\eps}$ as well as initial data $v_{0,\eps}$ and $w_{0,\eps}$ if $\tau = 1$. Further,
	\begin{equation}\label{eq:mass_conservation}
		\int_\Omega \ue(\cdot, t)	= \int_\Omega u_{0,\eps} = u_0(\overline{\Omega}) = m 
	\end{equation}
	as well as
	\begin{equation}\label{eq:mass_bound}
		\int_\Omega \ve(\cdot, t) \leq \max\left( \frac{\alpha m}{\beta}, \tau \int_\Omega v_0 \right) \stext{ and } \int_\Omega \we(\cdot, t) \leq \max\left( \frac{\gamma m}{\delta}, \tau \int_\Omega w_0 \right)
	\end{equation}
	for all $t \in (0,\infty)$.
\end{lemma}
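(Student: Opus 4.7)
The plan is to first establish local-in-time existence by a standard Banach fixed-point argument based on the smoothing of the Neumann heat semigroup acting on the smooth initial data $u_{0,\eps}, v_{0,\eps}, w_{0,\eps}$; this yields a classical solution on a maximal interval $[0, \tmaxeps)$ together with the extensibility criterion that either $\tmaxeps = \infty$ or $\|\ue(\cdot,t)\|_{L^\infty(\Omega)} \to \infty$ as $t \nearrow \tmaxeps$. Strict positivity of $\ue$ and nonnegativity of $\ve, \we$ then follow componentwise from the parabolic (respectively elliptic) maximum principle applied to the smooth, positive/nonnegative initial data. Integrating the first equation of (\ref{problem}) over $\Omega$ and using the Neumann boundary conditions immediately gives (\ref{eq:mass_conservation}), whereas integrating the second and third equations produces linear inhomogeneous ODEs (or algebraic identities if $\tau = 0$) for $\int_\Omega \ve$ and $\int_\Omega \we$ whose solutions verify (\ref{eq:mass_bound}) by elementary comparison against the constant supersolutions indicated on the right-hand side.

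The substantive task is to rule out $\tmaxeps < \infty$, for which it is sufficient to produce a uniform-in-time $L^p$ bound on $\ue$ for some $p > n$ on every finite time interval. Switching to the transformed variable $\ze \defs \xi\we - \chi\ve$ satisfying (\ref{problem_simplified}) and testing its first equation with $\ue^{p-1}$, one obtains
\begin{equation*}
\frac{1}{p}\frac{d}{dt}\int_\Omega \ue^p + \frac{4(p-1)}{p^2}\int_\Omega |\grad \ue^{p/2}|^2 = \frac{p-1}{p}\int_\Omega \ue^p \laplace \ze.
\end{equation*}
In the parabolic-elliptic scenarios (\ref{scenario_2}) and (\ref{scenario_3}) the right-hand side is treated by substituting the elliptic identity $\laplace\ze = \delta\ze - \zeta \ue - \sigma\ve$ coming from the second equation of (\ref{problem_simplified}), isolating the sign-critical contribution $-\frac{p-1}{p}\zeta\int_\Omega \ue^{p+1}$. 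When $\zeta \geq 0$ this term has a favorable sign and can be moved to the left, while when $\zeta$ is slightly negative as allowed in (\ref{scenario_2}) the corresponding positive contribution is absorbed into the gradient dissipation $\int_\Omega |\grad \ue^{p/2}|^2$ by means of a two-dimensional Gagliardo--Nirenberg inequality used together with the mass bound (\ref{eq:mass_conservation}); the numerical value of the Gagliardo--Nirenberg constant arising here is precisely what fixes the admissible $\Capprox$. The remaining lower-order contributions involving $\ze$ and $\ve$ are then controlled by elliptic regularity for the second and third equations of (\ref{problem_simplified}) together with standard Young absorption.

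The two-dimensional parabolic-parabolic case (\ref{scenario_1}) instead relies on the energy-based strategy of \cite{LiuGlobalBoundednessFully2015} and \cite{TaoCOMPETINGEFFECTSATTRACTION2013} centered on the functional $\mathcal{F}(t) \defs \zeta \int_\Omega \ue\ln\ue + \tfrac{1}{2}\int_\Omega|\grad\ze|^2$: testing the $\ue$-equation with $\ln\ue$ and the $\ze$-equation with $-\laplace \ze$ yields a dissipation identity that, combined with the two-dimensional Gagliardo--Nirenberg inequality and (\ref{eq:mass_conservation}), produces uniform bounds on $\int_\Omega \ue\ln\ue$ and $\int_\Omega |\grad\ze|^2$ and subsequently on $\int_\Omega \ue^p$ for every $p > 1$. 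The one subtlety relative to the strict-inequality versions of the cited papers is admitting the boundary equality $\zeta = 0$: in that case $\mathcal{F}$ degenerates to $\tfrac{1}{2}\int_\Omega|\grad\ze|^2$ and the $\ze$-equation in (\ref{problem_simplified}) loses its direct coupling to $\ue$ through $\zeta\ue$, being forced only by $\sigma\ve$; combining heat-semigroup $L^p$-$L^q$ estimates for $\ze$ with the $L\log L$-dissipation coming from testing the $\ue$-equation with $\ln\ue$ and the $L^1$-bound on $\ve$ is then enough to close the bootstrap. Once $L^p$-control on $\ue$ for some $p > n$ is available in each scenario, a routine variation-of-constants bootstrap for $\ue$ together with parabolic or elliptic regularity for $\ze, \ve$ and $\we$ upgrades it to an $L^\infty$-bound on every compact subinterval, contradicting the extensibility criterion and hence forcing $\tmaxeps = \infty$. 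I expect the principal obstacles to be the explicit tracking of the Gagliardo--Nirenberg constant that determines $\Capprox$ in (\ref{scenario_2}) and the handling of the degenerate boundary case $\zeta = 0$ in (\ref{scenario_1}).
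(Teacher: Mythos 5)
Your proposal is correct and follows essentially the same strategy as the paper: local existence with the $L^\infty$ extensibility criterion, mass identities by integration, positivity via the maximum principle, and global existence secured by absorbing the sign-critical term $-\tfrac{\zeta}{p}\int_\Omega \ue^{p+1}$ into the dissipation through a mass-dependent Gagliardo--Nirenberg inequality (which is exactly what fixes $\Capprox$ in Scenario (\ref{scenario_2})), followed by elliptic/semigroup regularity and a variation-of-constants bootstrap to $L^\infty$. The only notable difference is presentational: for $\xi\gamma-\chi\alpha>0$ the paper simply cites the known global existence results of \cite{LiuGlobalBoundednessFully2015} and \cite{TaoCOMPETINGEFFECTSATTRACTION2013} rather than re-deriving the energy-functional argument, and in the borderline case $\zeta=0$ it closes the argument purely by semigroup/elliptic regularity for the then decoupled $\ze$-equation, so the $L\log L$-dissipation you invoke there is not actually needed.
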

\begin{proof}
	As the system described in (\ref{problem}) is the same as the one discussed in \cite{TaoCOMPETINGEFFECTSATTRACTION2013}, we can in fact use Lemma 3.1 from the aforementioned paper to ensure that, for each $\eps \in (0,1)$, nonnegative local solutions exist on some time interval $(0, \tmaxeps)$ and, if $\tmaxeps < \infty$, then $\limsup_{t\nearrow \tmax}\|\ue(\cdot, t)\|_\L{\infty} = \infty$. The mass conservation property (\ref{eq:mass_conservation}) and mass boundedness properties in (\ref{eq:mass_bound}) then immediately follow by integrating the equations in (\ref{problem}). Positivity of $\ue$ is further a direct consequence of the maximum principle.
  	\\[0.5em]
	If $\xi\gamma  - \chi\alpha > 0$, finite-time blow-up of the above solutions has already been ruled out in \cite[Theorem 1.1]{LiuGlobalBoundednessFully2015} or \cite[Theorem 2.1]{TaoCOMPETINGEFFECTSATTRACTION2013} for all of our scenarios.
	\\[0.5em]
	If $\xi\gamma  - \chi\alpha = 0$, then $\zeta = 0$ and thus the second equation in the closely related problem (\ref{problem_simplified}) does not directly depend on $\ue$ anymore, which makes the system much less challenging. In two and three dimensions, we can thus use either elliptic regularity theory or semigroup methods to first conclude that there exist bounds for $\ve$ in $L^p(\Omega)$ for all $p\in(1,n)$ using the mass conservation property (\ref{eq:mass_conservation}), which by the same line of reasoning gives us a bound for $\ze$ in $W^{1,p}(\Omega)$ for any $p\in(1,\infty)$ and $\ze$ defined as in (\ref{eq:z}) (cf.\ \Cref{lemma:gradz_bound} for a similar argument). Similar to the reasoning employed in \Cref{lemma:u_linfty_bound} another semigroup based argument then allows us to rule out finite-time blow-up in this case altogether.
	\\[0.5em]
	If $\xi\gamma  - \chi\alpha < 0$, we are necessarily in Scenario (\ref{scenario_2}). As thus $n = 2$, we can use the same elliptic regularity theory for $L^1(\Omega)$ source terms (cf.\ \cite[Lemma 23]{BrezisSemilinearSecondorderElliptic1973}) we will later also use in \Cref{lemma:baseline_elliptic} to fix a constant $K_1(p) > 0$ for each $p\in (1,\infty)$ such that $\int_\Omega \we^p \leq K_1(p)$ on $(0,\tmaxeps)$ due to the mass conservation property (\ref{eq:mass_conservation}). Further using a straightforward consequence of the Gagliardo--Nirenberg inequality (cf.\ \cite[Lemma 4.2]{HeihoffExistenceGlobalSmooth2021}), we then fix $K_2(p) > 0$ for each $p \in (1,\infty)$ such that
	\begin{equation}\label{eq:gni_exist}
		\int_\Omega \ue^{p+1} \leq K_2(p) m \int_\Omega |\grad \ue^\frac{p}{2}|^2 + K_2(p) m^{p+1}
	\end{equation}
	for all $\eps \in (0,1)$ and $t\in(0,\tmaxeps)$, where $K_2(p)$ only depends on $p$ and the domain $\Omega$. Having fixed these constants, we now let $\Capprox \defs \frac{1}{4 K_2(8)}$.
	\\[0.5em]
	We then test the first equation in (\ref{problem}) with $\ue^{p-1}$, use partial integration and then use the second and third equation in (\ref{problem}), the estimate in (\ref{scenario_2}) with $C_{\mathrm{S2}} \leq \Capprox$ as well as Young's inequality to gain
	\begin{align*}
		\frac{1}{p(p-1)}\frac{\d}{\d t}\int_\Omega \ue^p 
		&= -\frac{4}{p^2}\int_\Omega |\grad \ue^\frac{p}{2}|^2 + \frac{\chi}{p}\int_\Omega \grad \ue^p \cdot \grad \ve - \frac{\xi}{p}\int_\Omega \grad \ue^p \cdot \grad \we \\
		&= -\frac{4}{p^2}\int_\Omega |\grad \ue^\frac{p}{2}|^2 - \frac{\chi}{p}\int_\Omega \ue^p \laplace \ve + \frac{\xi}{p}\int_\Omega \ue^p \laplace \we \\
		&= -\frac{4}{p^2}\int_\Omega |\grad \ue^\frac{p}{2}|^2 - \frac{\zeta}{p}\int_\Omega \ue^{p+1} - \frac{\chi \beta}{p}\int_\Omega \ve \ue^p + \frac{\xi \delta}{p}\int_\Omega \we \ue^p \\
		&\leq -\frac{4}{p^2}\int_\Omega |\grad \ue^\frac{p}{2}|^2 + \frac{\Capprox}{m p}\int_\Omega \ue^{p+1} +\frac{\xi \delta}{p}\int_\Omega \we \ue^p \\
		&\leq -\frac{4}{p^2}\int_\Omega |\grad \ue^\frac{p}{2}|^2 + \frac{2\Capprox}{m p}\int_\Omega \ue^{p+1} + K_3(p) m^p\int_\Omega \we^{p+1} \\
		&\leq -\frac{4}{p^2}\int_\Omega |\grad \ue^\frac{p}{2}|^2 + \frac{2\Capprox}{m p}\int_\Omega \ue^{p+1} + K_1(p+1)K_3(p) m^p\numberthis \label{eq:local_exist_test}
	\end{align*}
	for all $\eps \in (0,1)$, $t\in(0,\tmaxeps)$ and $p\in(2,\infty)$ with $K_3(p) \defs (\frac{\xi \delta}{p})^{p+1}(\frac{p}{\Capprox})^p$. If we now apply (\ref{eq:gni_exist}) to the above and set $p = 8$, we gain  
	\[
		\frac{1}{56}\frac{\d}{\d t}\int_\Omega \ue^8 \leq K_4
	\]
	for all $\eps \in (0,1)$ and $t\in(0,\tmaxeps)$ with $K_4 \defs (\frac{1}{16} + K_1(9)K_3(8) )m^8$ by our choice of $\Capprox$. Thus by time integration and standard elliptic regularity theory (cf.\ \cite{FriedmanPartialDifferentialEquations1969}) applied to the second and third equation in (\ref{problem}), it follows that there exists $K_5 > 0$ such that  
	\[
		\|\ue(\cdot, t)\|_\L{8} \leq K_5, \;\;\;\;\|\ve(\cdot, t)\|_{W^{2,8}(\Omega)} \leq K_5 \stext{ as well as }\|\we(\cdot, t)\|_{W^{2,8}(\Omega)} \leq K_5
	\]
	for all $\eps \in (0,1)$ and $t\in(0,\tmaxeps)$ if $\tmaxeps$ is finite.
	With $\ze$ defined as in (\ref{eq:z}), this directly gives us
	\[
		\|\ze(\cdot, t)\|_{W^{2,8}(\Omega)} \leq K_5(|\xi| + |\chi|) \sfed K_6 
	\]
	for all $\eps \in (0,1)$ and $t\in(0,\tmaxeps)$ if $\tmaxeps$ is finite. Using the variation-of-constants representation for $\ue$ corresponding to the first equation in (\ref{problem_simplified}) and the smoothing properties of the Neumann heat semigroup (cf.\ \cite[Lemma 1.3]{WinklerAggregationVsGlobal2010}), it further follows that 
	\begin{align*}
		\|\ue(\cdot, t)\|_\L{\infty} &\leq \left\|\, e^{t\laplace} u_{0,\eps} + \int_0^t e^{(t-s)\laplace} \div (\ue(\cdot, s) \grad \ze(\cdot, s)) \d s \,\right\|_\L{\infty} \\ 
		&\leq K_6 \|u_{0,\eps}\|_\L{\infty} + K_7 \int_0^t (1 + (t-s)^{-\frac{3}{4}}) \|\ue(\cdot, t) \grad \ze(\cdot, s)\|_\L{4} \d s \\
		&\leq K_6 \|u_{0,\eps}\|_\L{\infty} +  K_5 K_6 K_7 \int_0^t (1+ s^{-\frac{3}{4}}) \d s
	\end{align*}
	for all $\eps \in (0,1)$ and $t\in(0,\tmaxeps)$ with some appropriate constant $K_7 > 0$ if $\tmaxeps$ is finite. As the remaining integral is bounded for $t\in(0,\tmaxeps)$ if $\tmaxeps$ is finite, the above inequality in fact rules out finite-time blow-up in this scenario as well and thus completes the proof.
\end{proof}
\noindent
As we will henceforth always work in at least one of the three scenarios (\ref{scenario_1}), (\ref{scenario_2}) with a constant smaller than the one introduced in the above lemma or (\ref{scenario_3}), we will also fix the solutions constructed above as $(\ue, \ve, \we)$ for all $\eps\in(0,1)$ as a matter of convenience for the remainder of the paper. We further always correspondingly define $\ze$ as in (\ref{eq:z}).

\section{A priori estimates degrading toward zero}
\label{section:apriori}
As is typical for a construction of this kind, we will now spend the remainder of this paper deriving sufficient a priori estimates to gain our desired solutions as limits of the approximate solutions fixed in the previous section. Notably as a consequence of the low initial data regularity, most of these bounds necessarily need to decay as $t\searrow 0$ if they are to be independent of $\eps$. While we will see that the degree to which this decay happens is not important to ensure that our limit functions solve (\ref{problem}) on $\Omega\times(0,\infty)$, we will in fact need more qualitative information regarding the decay to ensure that the limit functions are continuous at $t = 0$ in the sense laid out in (\ref{u_continuity}), (\ref{v_continuity}) and (\ref{w_continuity}). As such, the aim of this section is to derive exactly such uniform a priori information for $\ue$, $\ve$, $\we$ and $\ze$.
\\[0.5em]
As the parabolic-parabolic and parabolic-elliptic cases call for very different methods to establish this important baseline information, we will address them here separately.  

\subsection{The parabolic-parabolic case}

Before we approach the derivation of the titular a priori estimates decaying close to time zero for the parabolic-parabolic case, we first derive the best to be expected Sobolev bounds for $\ve$, $\we$ and $\ze$ that hold up to time zero corresponding with the initial data regularity for $v_0$ and $w_0$ as well as the mass conservation property (\ref{eq:mass_conservation}). These bounds will later serve as a useful baseline for interpolation. 

\begin{lemma}\label{lemma:baseline_parabolic}
	Assume we are in Scenario (\ref{scenario_1}). Then there exists $C > 0$ such that 
	\[
		\|\ve(\cdot, t)\|_{W^{1,r}(\Omega)} \leq C, \;\;\;\; \|\we(\cdot, t)\|_{W^{1,r}(\Omega)} \leq C, \;\;\;\; \|\ze(\cdot, t)\|_{W^{1,r}(\Omega)} \leq C	
	\]
	for all $t\in[0,\infty)$ and $\eps \in (0,1)$ with $r\in(\frac{6}{5}, 2)$ as fixed at the beginning of this section.
\end{lemma}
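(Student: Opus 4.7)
Since we are in Scenario (\ref{scenario_1}), $\tau=1$, so $v_\eps$ and $w_\eps$ satisfy genuine parabolic equations and we have variation-of-constants representations
\[
	\ve(\cdot,t) = e^{t(\laplace - \beta)} v_{0,\eps} + \alpha \int_0^t e^{(t-s)(\laplace-\beta)}\ue(\cdot,s)\d s,
\]
and analogously for $\we$ with parameters $\delta, \gamma$ and initial datum $w_{0,\eps}$. The idea is to estimate each summand in $W^{1,r}(\Omega)$ using the standard smoothing estimates for the Neumann heat semigroup (as in \cite[Lemma 1.3]{WinklerAggregationVsGlobal2010}), exploiting only the $L^1$-mass conservation (\ref{eq:mass_conservation}) of $\ue$ and the $W^{1,r}$-convergence (\ref{eq:approx_vw0_properties}) of the approximate data.

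The first summand is easy: the contraction property of $e^{t(\laplace-\beta)}$ on $W^{1,r}(\Omega)$ combined with (\ref{eq:approx_vw0_properties}) yields
\[
	\|e^{t(\laplace-\beta)} v_{0,\eps}\|_{W^{1,r}(\Omega)} \leq e^{-\beta t}\|v_{0,\eps}\|_{W^{1,r}(\Omega)} \leq C_1
\]
uniformly in $t\geq 0$ and $\eps \in (0,1)$, since the $v_{0,\eps}$ form a $W^{1,r}$-convergent and hence bounded sequence. For the Duhamel term, I would separately bound the $L^r$- and the gradient-$L^r$-norm. Using the $(L^1\to L^r)$-estimate for the Neumann heat semigroup in dimension $n=2$ together with the exponential factor from $-\beta$, we obtain
\[
	\|e^{(t-s)(\laplace-\beta)}\ue(\cdot,s)\|_\L{r} \leq C_2 e^{-\beta(t-s)}\bigl(1 + (t-s)^{-(1-1/r)}\bigr) \|\ue(\cdot,s)\|_\L{1} = C_2 m\, e^{-\beta(t-s)}\bigl(1 + (t-s)^{-(1-1/r)}\bigr),
\]
and analogously
\[
	\|\grad e^{(t-s)(\laplace-\beta)}\ue(\cdot,s)\|_\L{r} \leq C_3 m\, e^{-\beta(t-s)}\bigl(1 + (t-s)^{-\frac{3}{2}+\frac{1}{r}}\bigr).
\]
Because $r<2$ we have $-\tfrac{3}{2}+\tfrac{1}{r} > -1$, so both singularities are integrable at $s=t$, and the exponential decay at $s\to-\infty$ makes the integrals $\int_0^t e^{-\beta(t-s)}(1+(t-s)^{-\frac{3}{2}+\frac{1}{r}})\d s$ uniformly bounded in $t\in[0,\infty)$. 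Adding the two bounds yields the desired estimate for $\ve$, and the same argument (with $\gamma, \delta$ in place of $\alpha, \beta$) gives the bound for $\we$. The estimate for $\ze \defs \xi\we - \chi\ve$ then follows by the triangle inequality.

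\textbf{Main obstacle.} The only delicate point is the integrability of the gradient singularity, which requires $-\tfrac{3}{2}+\tfrac{1}{r}>-1$, i.e.\ $r<2$; this is exactly the upper bound in the hypothesis $r\in(\tfrac{6}{5},2)$. The lower bound $r>\tfrac{6}{5}$ plays no role in the present lemma and reflects requirements arising elsewhere in the paper. Otherwise the argument is routine semigroup bookkeeping, relying only on mass conservation for $\ue$ and on the uniform $W^{1,r}$-boundedness of $(v_{0,\eps})$ and $(w_{0,\eps})$ furnished by (\ref{eq:approx_vw0_properties}).
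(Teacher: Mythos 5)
Your proposal is correct and follows essentially the same route as the paper: variation-of-constants representation, Neumann heat semigroup smoothing with the singularity exponent $\frac1r-\frac32>-1$ coming from $r<2$, mass conservation of $\ue$, and uniform $W^{1,r}(\Omega)$-boundedness of the approximate data, the only bookkeeping difference being that the paper controls the zero-order part via the mass bound (\ref{eq:mass_bound}) and the Poincar\'e inequality rather than a direct $L^1$--$L^r$ smoothing estimate for the Duhamel term. One cosmetic caveat: on $W^{1,r}(\Omega)$ you should quote the semigroup estimate $\|\grad e^{t(\laplace-\beta)}\phi\|_{L^r(\Omega)}\le K\,\|\grad \phi\|_{L^r(\Omega)}$ with some constant $K\ge 1$ (as the paper does) rather than an exact contraction with constant $e^{-\beta t}$, which changes nothing in the argument.
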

\begin{proof}
	Given the convergence properties in (\ref{eq:approx_vw0_properties}), we can fix $K_1 > 0$ such that 
	\[
		\|\ve(\cdot, 0)\|_{W^{1,r}(\Omega)} \leq K_1.
 	\]
	Then using the variation-of-constants representation of the second equation in (\ref{problem}) and known smoothing properties of the Neumann heat semigroup (cf.\ \cite[Lemma 1.3]{WinklerAggregationVsGlobal2010}), we can fix $K_2 > 0$ such that 
	\begin{align*}
		\|\grad \ve(\cdot, t)\|_\L{r} &\leq \left\| \, \grad e^{t(\laplace-\beta)}\ve(\cdot, 0) + \alpha\int_0^t \grad e^{(t-s)(\laplace-\beta)}\ue(\cdot, s)  \d s \, \right\|_\L{r}	\\
		&\leq K_2 \|\grad \ve(\cdot, 0)\|_\L{r} + \alpha K_2\int_0^t (1+(t-s)^{\frac{1}{r}-\frac{3}{2}})e^{-\beta(t-s)} \|\ue(\cdot, s)\|_\L{1} \d s \\
		&\leq K_1 K_2 +  \alpha m K_2 \int_0^t (1+ s^{\frac{1}{r} - \frac{3}{2}})e^{-\beta s} \d s
	\end{align*}
	for all $t\in [0,\infty)$ and $\eps \in (0,1)$. As due to $r \in (1,2)$ the remaining integral term is bounded independent of $t$, our desired bound for $\ve$ follows from the above by combining it with the mass bound from (\ref{eq:mass_bound}) and e.g.\ the Poincaré inequality. By essentially the same reasoning, we obtain a corresponding bound for $\we$. The bound for $\ze$ then follows immediately as $\ze$ is merely a linear combination of $\ve$ and $\we$.
\end{proof}\noindent
Since in Scenario (\ref{scenario_1}) we work in a two-dimensional setting, the Sobolev embedding theorem directly yields the following corollary to the above, which we will use to handle the case $\sigma \neq 0$ whenever necessary.
\begin{corollary}\label{corollary:v_l2}
	Assume we are in Scenario (\ref{scenario_1}). Then for each $p\in [1,\frac{2r}{2-r}]$, there exists $C \equiv C(p) > 0$ such that 
	\[
		\|\ve(\cdot, t)\|_\L{p} \leq C	
	\]
	for all $t\in[0,\infty)$ and $\eps \in (0,1)$.
\end{corollary}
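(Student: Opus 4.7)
The statement is essentially an immediate corollary of \Cref{lemma:baseline_parabolic}, relying only on the Sobolev embedding theorem in dimension $n=2$. The plan is therefore very short.

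First, I would invoke \Cref{lemma:baseline_parabolic} to fix a constant $K_1 > 0$ with $\|\ve(\cdot,t)\|_{W^{1,r}(\Omega)} \leq K_1$ for all $t \in [0,\infty)$ and $\eps \in (0,1)$. Since we are in Scenario (\ref{scenario_1}), we have $n = 2$ and $r \in (\tfrac{6}{5}, 2) \subset (1, n)$, so the Sobolev embedding theorem gives the continuous embedding
\[
	W^{1,r}(\Omega) \hookrightarrow L^{p^{\ast}}(\Omega), \qquad p^{\ast} \defs \frac{nr}{n-r} = \frac{2r}{2-r},
\]
on the smooth bounded domain $\Omega$. Combining this embedding with the uniform $W^{1,r}(\Omega)$ bound already yields a uniform bound of the form $\|\ve(\cdot,t)\|_{L^{p^{\ast}}(\Omega)} \leq K_2$ for some $K_2 > 0$ independent of $t$ and $\eps$.

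For the remaining range $p \in [1, p^{\ast})$, I would simply use that $\Omega$ has finite Lebesgue measure, so the standard interpolation/Hölder inequality gives the continuous embedding $L^{p^{\ast}}(\Omega) \hookrightarrow L^{p}(\Omega)$ with an embedding constant depending on $p$ and $|\Omega|$. Setting $C(p)$ to be the product of $K_2$ with this embedding constant finishes the proof.

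There is essentially no obstacle here; the only point to be slightly careful about is confirming that $p^\ast = \tfrac{2r}{2-r}$ is indeed the correct Sobolev exponent for the subcritical regime $r < n = 2$, which it is, and that the embedding is valid on general smooth bounded domains (which is standard, e.g.\ via the extension theorem).
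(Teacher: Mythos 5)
Your proposal is correct and matches the paper's own (implicit) argument exactly: the paper derives this corollary from \Cref{lemma:baseline_parabolic} via the Sobolev embedding $W^{1,r}(\Omega)\hookrightarrow L^{\frac{2r}{2-r}}(\Omega)$ in the two-dimensional setting, with the lower exponents handled by the boundedness of $\Omega$, just as you describe.
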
  
\noindent
To now gain the central result of this section, we will employ an energy-type argument based on the familiar  functional $\mathcal{F}(t) \defs \zeta\int_\Omega \ue \ln(\ue) + \frac{1}{2}\int_\Omega |\grad \ze|^2$ multiplied by $t^{\lambda}$ with a sufficiently strong dampening factor $\lambda$ to make it initial-data independent while ensuring that the resulting additional terms can still be absorbed by the dissipative terms. This will not only allow us to derive a bound for said energy-type functional itself but more importantly some dampened space-time integral bounds corresponding to higher order terms of $\ue$ and $\ze$, which will prove crucial to connect our later constructed solutions to their initial data.
Further note that this argument centrally relies on the restriction $\zeta = \xi\gamma- \chi \alpha \geq 0$ from (\ref{scenario_1}) to ensure that the functional is always bounded from below and in fact this is the main reason this restriction is necessary for Scenario (\ref{scenario_1}). 
\begin{lemma}\label{lemma:energy}
	Assume we are in Scenario (\ref{scenario_1}). Then there exists $\lambda \in (0,\frac{2}{3})$ such that, for each $T > 0$, there is $C \equiv C(T) > 0$ with 
	\[
		\zeta\int_\Omega \ue(\cdot,t) \ln(\ue(\cdot,t)) \leq C t^{-\lambda} \stext{ and } \int_\Omega |\grad \ze(\cdot,t)|^2 \leq C t^{-\lambda} 
	\]
	as well as 
	\[
		\zeta\int_0^t s^{\lambda} \int_\Omega \frac{|\grad \ue(x,s)|^2}{\ue(x,s)} \d x \d s \leq C 	 \stext{ and } \int_0^t s^{\lambda} \int_\Omega |\laplace \ze(x,s)|^2 \d x \d s \leq C 	
	\]
	for all $t \in (0,T)$ and $\eps \in (0,1)$.
\end{lemma}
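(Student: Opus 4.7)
The idea is to dampen the standard energy-type functional $\mathcal F(t) \defs \zeta\int_\Omega \ue \ln \ue + \tfrac12 \int_\Omega |\grad \ze|^2$ by the factor $t^\lambda$ and to absorb the cost of the dampening via Gagliardo--Nirenberg interpolation against the baseline $W^{1,r}$-information from \Cref{lemma:baseline_parabolic}. Working in the transformed system (\ref{problem_simplified}), I would first test the $\ue$-equation against $\ln(\ue)+1$ and the $\ze$-equation against $-\laplace \ze$; the crucial cancellation is that the resulting mixed terms $\zeta\int_\Omega \ue\laplace\ze$ annihilate, leaving
\[
 \mathcal F'(t) + \zeta\int_\Omega \tfrac{|\grad \ue|^2}{\ue} + \int_\Omega |\laplace \ze|^2 + \delta\int_\Omega |\grad \ze|^2 = -\sigma \int_\Omega \ve \laplace \ze.
\]
Abbreviating the dissipative quantity on the left by $D(t)$, multiplying by $t^\lambda$ and integrating on $(0,t)$ --- noting that the boundary contribution at $s=0$ drops out because $\lambda>0$ and $\mathcal F$ stays finite at $s=0$ for each fixed $\eps$ --- one arrives at
\[
 t^\lambda \mathcal F(t) + \int_0^t s^\lambda D(s)\d s \leq \lambda \int_0^t s^{\lambda-1}\mathcal F(s)\d s + |\sigma|\int_0^t s^\lambda\int_\Omega \ve |\laplace \ze| \d x\d s.
\]

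The $\sigma$-contribution is straightforward to handle: Young's inequality together with \Cref{corollary:v_l2} (applied with $p=2$, which is admissible because $r>1$) bounds it by $\tfrac12\int_0^t s^\lambda \int_\Omega |\laplace \ze|^2 + C(T)$, the first summand being absorbed by the left-hand dissipation.

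The key step is controlling the degeneracy $\lambda\int_0^t s^{\lambda-1}\mathcal F(s)\d s$. For the $|\grad \ze|^2$-component I would combine \Cref{lemma:baseline_parabolic} with the elliptic estimate $\|D^2 \ze\|_{L^2}\leq C(\|\laplace \ze\|_{L^2}+1)$ and the two-dimensional Gagliardo--Nirenberg inequality (with exponent $\theta=1-r/2$) to obtain
\[
 \|\grad \ze\|_{L^2}^2 \leq C\|\laplace \ze\|_{L^2}^{2-r} + C,
\]
and then invoke Young's inequality with exponents $\tfrac{2}{2-r}$ and $\tfrac{2}{r}$ to split $s^{\lambda-1}\|\laplace\ze\|_{L^2}^{2-r}$ as $\mu s^\lambda \|\laplace\ze\|_{L^2}^2 + C_\mu s^{\lambda - 2/r}$. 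Integrability of $s^{\lambda-2/r}$ near zero forces $\lambda>(2-r)/r$, which is exactly where the hypothesis $r>\tfrac{6}{5}$ is needed: it guarantees $(2-r)/r<\tfrac{2}{3}$, so the admissible window $\lambda\in\bigl((2-r)/r,\tfrac{2}{3}\bigr)$ is nonempty. For the $\ue\ln\ue$-component (which is irrelevant if $\zeta=0$), I would use $\ue\ln\ue \leq \ue^{1+\theta}+C_\theta$ combined with the two-dimensional Gagliardo--Nirenberg bound $\|\ue\|_{L^{1+\theta}}^{1+\theta}\leq Cm\|\grad\sqrt{\ue}\|_{L^2}^{2\theta}$ and another Young inequality with exponents $1/\theta$, $1/(1-\theta)$, choosing $\theta>0$ small enough (relative to the already-fixed $\lambda$) so that the residual power of $s$ is integrable at zero.

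Choosing $\mu$ small to absorb all absorbable terms into $\int_0^t s^\lambda D(s)\d s$, then using $\ue\ln\ue\geq -e^{-1}$ to get a uniform lower bound on $\mathcal F$ in terms of $\zeta|\Omega|$, one reads off the two stated estimates. The main obstacle is precisely the delicate Young-inequality bookkeeping in the third step above: the exponent $\lambda$ must simultaneously be large enough to render $s^{\lambda-2/r}$ integrable near $0$ and small enough to keep $t^\lambda\mathcal F(t)\to 0$ as $t\searrow 0$ so that the argument decouples from the $\eps$-dependent initial data, and the feasible window is exactly what the regularity $r>\tfrac{6}{5}$ of $v_0,w_0$ supplies.
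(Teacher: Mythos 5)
Your proposal is correct and is essentially the paper's own argument: the same time-dampened functional $\zeta\int_\Omega \ue\ln\ue+\tfrac12\int_\Omega|\grad\ze|^2$, the same cancellation of the mixed $\zeta$-terms, the same Gagliardo--Nirenberg/Young bookkeeping against the $W^{1,r}$ baseline of \Cref{lemma:baseline_parabolic} leading to the condition $\lambda-\tfrac{2}{r}>-1$ (i.e.\ $r>\tfrac65$), and the same treatment of the $\sigma$-term via \Cref{corollary:v_l2}. The only cosmetic deviations are that the paper differentiates $t^\lambda\mathcal F(t)$ directly rather than integrating $s^\lambda\mathcal F'(s)$ by parts, and that the upper restriction $\lambda<\tfrac23$ is not actually needed for this argument but is imposed because later lemmas (continuity at $t=0$) require it.
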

\begin{proof}
We begin by fixing $\lambda \in (0,\frac{2}{3})$ such that 
\[
	\lambda - \frac{2}{r} > -1,
\]  
which is possible as $r > \frac{6}{5}$.
\\[0.5em]
We now test the first equation in (\ref{problem_simplified}) with $t^{\lambda}\ln(\ue)$ and use partial integration to gain
\begin{equation}\label{eq:lnu_test}
	\frac{\d}{\d t} \left[ t^{\lambda} \int_\Omega \ue \ln(\ue)\right] = -t^{\lambda}\int_\Omega \frac{|\grad \ue|^2}{\ue} - t^{\lambda}\int_\Omega \grad \ue \cdot \grad \ze + \lambda t^{\lambda - 1}\int_\Omega \ue \ln(\ue)
\end{equation}
for all $t \in (0,\infty)$ and $\eps \in (0,1)$. We now fix $p \in (1,2)$ such that 
\[ 
	\frac{1-p}{2-p} + \lambda > 0
\]
and then employ the Gagliardo--Nirenberg inequality as well as Young's inequality combined with the mass conservation property (\ref{eq:mass_conservation}) to gain $K_1 > 0$ such that 
\begin{align*}
	\int_\Omega \ue \ln(\ue) &\leq \frac{1}{e(p-1)}\int_\Omega \ue^p = \frac{1}{e(p-1)} \|\ue^\frac{1}{2}\|^{2p}_\L{2p} \\
	&\leq K_1 \|\grad \ue^\frac{1}{2}\|^{2p - 2}_\L{2} + K_1 = \frac{K_1}{2^{2p - 2}} \left(\int_\Omega \frac{|\grad \ue|^2}{\ue}\right)^{p - 1} + K_1 \\
	&\leq \frac{t}{2\lambda} \int_\Omega \frac{|\grad \ue|^2}{\ue} + K_2t^\frac{1-p}{2-p} + K_1
\end{align*}
for all $t \in (0,\infty)$ and $\eps \in (0,1)$ with $K_2 \defs 2^\frac{1-p}{2-p} \lambda^\frac{p-1}{2-p} K_1^\frac{1}{2-p}$. We then apply this to (\ref{eq:lnu_test}) to gain
\begin{equation}\label{eq:lnu_test2}
	\frac{\d}{\d t} \left[t^{\lambda} \int_\Omega \ue \ln(\ue)\right] \leq -\frac{t^{\lambda}}{2}\int_\Omega \frac{|\grad \ue|^2}{\ue} - t^{\lambda}\int_\Omega \grad \ue \cdot \grad \ze + \lambda K_2 t^{\frac{1-p}{2-p} + \lambda - 1} + \lambda K_1 t^{\lambda - 1}
\end{equation}
for all $t\in (0,\infty)$ and $\eps \in (0,1)$.
\\[0.5em]
As our next step, we test the second equation in (\ref{problem_simplified}) with $-t^{\lambda} \laplace \ze$ and use partial integration to gain 
\begin{align*}
	&\frac{\d}{\d t} \left[ \frac{t^{\lambda}}{2} \int_\Omega |\grad \ze|^2 \right] \\
	=& -t^{\lambda} \int_\Omega |\laplace \ze|^2 - \delta t^{\lambda} \int_\Omega |\grad \ze|^2 + \zeta t^{\lambda} \int_\Omega \grad \ze \cdot \grad \ue - \sigma t^{\lambda} \int_\Omega \ve\laplace \ze + \tfrac{\lambda}{2} t^{\lambda - 1} \int_\Omega |\grad \ze|^2 \numberthis\label{eq:laplacez_test}
\end{align*}
for all $t \in (0,\infty)$ and $\eps \in (0,1)$. Another application of the Gagliardo--Nirenberg inequality as well as Young's inequality combined with elliptic regularity theory and the baseline bound in $W^{1,r}(\Omega)$ established in \Cref{lemma:baseline_parabolic} further yields $K_3 > 0$ such that
\begin{align*}
	\int_\Omega |\grad \ze|^2 &= \|\grad \ze\|^2_\L{2} \leq K_3 \|\laplace \ze\|^{2-r}_\L{2} + K_3 = K_3 \left(\int_\Omega |\laplace \ze|^2 \right)^{1-\frac{r}{2}} + K_3 \\
	&\leq \frac{t}{\lambda} \int_\Omega |\laplace \ze|^2 + K_4 t^{1-\frac{2}{r}} + K_3
\end{align*}
for all $t \in (0,\infty)$ and $\eps \in (0,1)$ with $K_4 \defs K_3^{\frac{2}{r}} \lambda^{\frac{2}{r} - 1}$. If we now apply this as well as Young's inequality and \Cref{corollary:v_l2} to (\ref{eq:laplacez_test}), we gain 
\begin{align*}
	&\frac{\d}{\d t} \left[ \frac{t^{\lambda}}{2} \int_\Omega |\grad \ze|^2 \right] \\
	\leq& -\frac{t^{\lambda}}{2} \int_\Omega |\laplace \ze|^2 + \zeta t^{\lambda} \int_\Omega \grad \ze \cdot \grad \ue - \sigma t^{\lambda} \int_\Omega \ve \laplace \ze  +  \frac{\lambda K_4}{2} t^{\lambda-\frac{2}{r}} + \frac{\lambda K_3}{2}t^{\lambda - 1} \\
	%
	%
	%
	%
	%
	%
	%
	%
	\leq& -\frac{t^{\lambda}}{4} \int_\Omega |\laplace \ze|^2 + \zeta t^{\lambda} \int_\Omega \grad \ze \cdot \grad \ue + \sigma^2 K_5^2 t^{\lambda} +  \frac{\lambda K_4}{2} t^{\lambda-\frac{2}{r}} + \frac{\lambda K_3}{2}t^{\lambda - 1} 
\end{align*}
for all $t \in (0,\infty)$ and $\eps \in (0,1)$ with $K_5 > 0$ as provided by \Cref{corollary:v_l2} as $\frac{2r}{2-r} \geq 2$. Combining this with an appropriately scaled (\ref{eq:lnu_test2}) yields 
\begin{align*}
	&\frac{\d}{\d t} \left[ \zeta t^{\lambda} \int_\Omega \ue \ln(\ue) + \frac{t^{\lambda}}{2}\int_\Omega |\grad \ze|^2 \right] + \zeta\frac{t^{\lambda}}{2} \int_\Omega \frac{|\grad \ue|^2}{\ue} + \frac{ t^{\lambda}}{4} \int_\Omega |\laplace \ze|^2 \\
	& \;\;\;\;\;\; \leq K_6 \left[ t^{\frac{1-p}{2-p} + \lambda - 1} + t^{\lambda - \frac{2}{r}}  + t^{\lambda - 1} + t^{\lambda} \right]
\end{align*}
for all $t \in (0,\infty)$ and $\eps \in (0,1)$ with $K_6 \defs \max(\zeta\lambda K_2, \frac{\lambda K_4}{2}, \zeta \lambda K_1 + \frac{\lambda K_3}{2}, \sigma^2 K_5^2)$. Given that our choices of $p$ and $\lambda$ ensure that the exponents on the right side of the above inequality are all larger than negative one and $\zeta \geq 0$, our desired result follows immediately by time integration.
\end{proof}\noindent
The above result now leads directly into a straightforward but important corollary.
\begin{corollary}\label{corollary:energy}
Assume we are in Scenario (\ref{scenario_1}). Then for each $T > 0$, there exists $C \equiv C(T) > 0$ such that 
\[
	\int_0^t s^{2\lambda} \int_\Omega |\grad \ze(x,s)|^4 \d x \d s \leq C
\]
for all $t \in (0,T)$ and $\eps \in (0,1)$ with $\lambda$ as in \Cref{lemma:energy}.
\end{corollary}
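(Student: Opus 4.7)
The plan is to control $\|\grad \ze(\cdot,s)\|_{L^4(\Omega)}^4$ pointwise in $s$ by interpolating between the two bounds already provided by \Cref{lemma:energy}: the decaying bound on $\int_\Omega |\grad \ze|^2$ (which behaves like $s^{-\lambda}$) and the time-integrated bound on $\int_\Omega |\laplace \ze|^2$ (which has the weight $s^\lambda$). Since we work in dimension $n = 2$ under Scenario (\ref{scenario_1}), the Gagliardo--Nirenberg inequality supplies exactly the right interpolation.

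More concretely, I would first apply the two-dimensional Gagliardo--Nirenberg inequality together with elliptic regularity theory for the Neumann Laplacian to obtain a constant $K_1 > 0$ (independent of $\eps$) with
\begin{equation*}
\|\grad \ze(\cdot,s)\|_{L^4(\Omega)}^4 \leq K_1 \bigl( \|\laplace \ze(\cdot,s)\|_{L^2(\Omega)}^2 + \|\ze(\cdot,s)\|_{L^2(\Omega)}^2 \bigr)\, \|\grad \ze(\cdot,s)\|_{L^2(\Omega)}^2
\end{equation*}
for all $s \in (0,T)$ and all $\eps \in (0,1)$. The $L^2$-bound on $\ze$ is supplied uniformly (even on $[0,\infty)$) by \Cref{lemma:baseline_parabolic} combined with the Sobolev embedding $W^{1,r}(\Omega) \hookrightarrow L^2(\Omega)$, which holds because $r > 6/5 > 1$ in a two-dimensional domain.

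Multiplying the displayed estimate by $s^{2\lambda}$ and using $\|\grad \ze(\cdot,s)\|_{L^2(\Omega)}^2 \leq C s^{-\lambda}$ from \Cref{lemma:energy} yields
\begin{equation*}
s^{2\lambda} \int_\Omega |\grad \ze(\cdot,s)|^4 \leq K_2\, s^{\lambda} \int_\Omega |\laplace \ze(\cdot,s)|^2 + K_2\, s^{\lambda}
\end{equation*}
for some $K_2 > 0$ and all $s \in (0,T)$, $\eps \in (0,1)$. Integrating in $s$ over $(0,t)$, the first term on the right-hand side is bounded uniformly in $\eps$ by \Cref{lemma:energy} itself, while the second integrates to $K_2\, t^{\lambda+1}/(\lambda+1) \leq K_2 T^{\lambda+1}/(\lambda+1)$ since $\lambda > 0$. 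Combining these two contributions furnishes the claimed bound.

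The only mildly delicate point is verifying the Gagliardo--Nirenberg plus elliptic-regularity step; however, this is standard in this two-dimensional Neumann setting, so I do not expect real obstacles. The real content of the corollary is simply the Young-type split $s^{2\lambda} = s^{\lambda} \cdot s^{\lambda}$, which exactly matches one pointwise-in-time estimate with one space-time bound from \Cref{lemma:energy}.
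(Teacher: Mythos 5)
Your proposal is correct and follows essentially the same route as the paper: a Gagliardo--Nirenberg interpolation of $\|\grad \ze\|_{L^4(\Omega)}^4$ against $\|\laplace \ze\|_{L^2(\Omega)}^2$ and $\|\grad \ze\|_{L^2(\Omega)}^2$, followed by the split $s^{2\lambda}=s^{\lambda}\cdot s^{\lambda}$ so that the pointwise decay of $\int_\Omega|\grad\ze|^2$ and the weighted space-time bound on $\int_\Omega|\laplace\ze|^2$ from \Cref{lemma:energy} can each absorb one factor. The only cosmetic difference is that the paper keeps the lower-order Gagliardo--Nirenberg term as $\|\grad\ze\|_{L^2(\Omega)}^4$ (again controlled by \Cref{lemma:energy}), whereas you invoke elliptic regularity and the uniform $L^2$ bound on $\ze$ from \Cref{lemma:baseline_parabolic}; both are fine.
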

\begin{proof}
	Using the Gagliardo--Nirenberg inequality, we see that 
	\begin{align*}
		t^{2\lambda}\int_\Omega |\grad \ze|^4 =& t^{2\lambda} \|\grad \ze\|^4_\L{4} \leq Kt^{2\lambda}\|\laplace \ze\|^2_\L{2} \|\grad \ze\|^2_\L{2} + 
		K t^{2\lambda}\|\grad \ze\|^4_\L{2} \\
		=& K \left(t^{\lambda}\int_\Omega |\laplace \ze|^2 \right) \left( t^{\lambda}\int_\Omega |\grad \ze|^2 \right) + K \left( t^{\lambda} \int_\Omega |\grad \ze|^2 \right)^2
	\end{align*}
	for all $\eps \in (0,1)$ and $t\in(0,\infty)$ with some appropriate $K > 0$. Given the bounds already established in \Cref{lemma:energy}, this immediately gives us our desired result.
\end{proof}
\noindent
While we have now already established all the estimates 
necessary to ensure continuity of our later constructed solutions at time $t=0$, we will now derive a further $L^2(\Omega)$ estimate for $\ue$ away from $t=0$ to facilitate the uniform handling of both the elliptic and parabolic cases in the next section, which is devoted to the derivation of regularity away from $t=0$. This is achieved by adapting a standard testing argument for parabolic-parabolic repulsion systems in a similar manner to the above energy-based arguments.
\begin{lemma}\label{lemma:u2_bound}
Assume we are in Scenario (\ref{scenario_1}). Then for each $t_0, t_1 \in \R$ with $t_1 > t_0 > 0$, there exists $C \equiv C(t_1, t_0) > 0$ such that
\[
	\| \ue(\cdot, t) \|_\L{2} \leq C
\]
for all $t\in(t_0,t_1)$ and $\eps \in (0,1)$.
\end{lemma}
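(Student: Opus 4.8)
The plan is to test the first equation in (\ref{problem_simplified}) with $\ue$ and absorb the cross-diffusion term $\int_\Omega \ue \grad \ue \cdot \grad \ze$ against the diffusive term $\int_\Omega |\grad \ue|^2$, at the cost of a term controlled by $\int_\Omega \ue^2 |\grad \ze|^2$. The latter is the crux: I would estimate it by H\"older with exponents $2$ and $2$ as $\|\ue\|_{L^4(\Omega)}^2 \|\grad \ze\|_{L^4(\Omega)}^2$, then invoke the two-dimensional Gagliardo--Nirenberg inequality $\|\ue\|_{L^4(\Omega)}^2 \leq K \|\grad \ue\|_{L^2(\Omega)} \|\ue\|_{L^2(\Omega)} + K \|\ue\|_{L^1(\Omega)}^2$ together with mass conservation (\ref{eq:mass_conservation}) to reduce matters to something of the form $K \|\grad \ue\|_{L^2(\Omega)} \|\ue\|_{L^2(\Omega)} \|\grad \ze\|_{L^4(\Omega)}^2 + \dots$, and finally Young's inequality to split off a small multiple of $\|\grad \ue\|_{L^2(\Omega)}^2$ and leave behind a term bounded by $K \|\ue\|_{L^2(\Omega)}^2 \|\grad \ze\|_{L^4(\Omega)}^4$ plus lower-order pieces.

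This yields a differential inequality of the schematic shape
\[
	\frac{\d}{\d t} \int_\Omega \ue^2 + \int_\Omega |\grad \ue|^2 \leq h(t)\int_\Omega \ue^2 + g(t),
\]
where $h(t) \defs K\|\grad \ze(\cdot,t)\|_{L^4(\Omega)}^4$ and $g(t)$ collects the mass-dependent remainders (and is locally bounded). The point of \Cref{corollary:energy} is precisely that $\int_0^t s^{2\lambda} h(s)\d s$ is finite, hence $\int_{t_*}^{t} h(s)\d s$ is finite on any interval $[t_*, t_1]$ with $t_* > 0$, since there $s^{2\lambda}$ is bounded below. So $h \in L^1_{\loc}((0,\infty))$ and a Gronwall argument is available away from zero.

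To get the actual bound on $(t_0,t_1)$, I would first use \Cref{lemma:energy} to select an auxiliary time $t_* \in (0, t_0)$ at which $\int_\Omega \ue^2(\cdot, t_*)$ is finite uniformly in $\eps$ — this does \emph{not} follow from \Cref{lemma:energy} directly (that lemma controls $\ue\ln\ue$, not $\ue^2$), so instead I would integrate the differential inequality over a short interval and use the dissipation: from $\int_0^t s^\lambda \int_\Omega |\grad \sqrt{\ue}|^2 \leq C$ and two-dimensional Gagliardo--Nirenberg one obtains $\int_0^t s^\lambda \|\ue(\cdot,s)\|_{L^2(\Omega)}^2 \d s \leq C$, whence there is some $t_* \in (0,t_0)$ with $\|\ue(\cdot, t_*)\|_{L^2(\Omega)}^2 \leq C/\int_0^{t_0} s^\lambda \d s$ uniformly in $\eps$. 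Then Gronwall's inequality applied from $t_*$ to $t \in (t_0, t_1)$, using $\int_{t_*}^{t_1} h(s)\d s \leq (t_*)^{-2\lambda}\int_0^{t_1} s^{2\lambda}h(s)\d s < \infty$ from \Cref{corollary:energy}, produces the claimed $\eps$-independent bound $\|\ue(\cdot,t)\|_{L^2(\Omega)} \leq C(t_0,t_1)$.

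The main obstacle is the handling of $\int_\Omega \ue^2|\grad\ze|^2$: one has to be careful that the Gagliardo--Nirenberg/Young splitting leaves a coefficient in front of $\int_\Omega \ue^2$ that is integrable in time — this is exactly where the quartic $L^4$-gradient control of $\ze$ from \Cref{corollary:energy} (rather than a mere $L^2$ bound) is needed, and where the restriction $\lambda < \tfrac23$ ensuring $2\lambda < \tfrac43 < \infty$ keeps the weight harmless. Everything else — obtaining a finite starting value at some $t_* \in (0,t_0)$ from the space-time bounds of \Cref{lemma:energy}, and the final Gronwall step — is routine.
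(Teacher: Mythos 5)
Your overall strategy (an $L^2$ testing/Gronwall argument whose growth coefficient is $\|\grad\ze\|_{L^4(\Omega)}^4$, controlled via \Cref{corollary:energy}) is the same as the paper's, and the Hölder/Gagliardo--Nirenberg/Young manipulations of the cross term are fine. The genuine gap is in your construction of the starting value at $t_*$. You invoke the dissipation bound of \Cref{lemma:energy} in the form $\int_0^t s^\lambda \int_\Omega |\grad\sqrt{\ue}|^2 \leq C$, but the lemma only provides this with the prefactor $\zeta$, i.e.\ $\zeta\int_0^t s^{\lambda}\int_\Omega \frac{|\grad\ue|^2}{\ue} \leq C$. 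In Scenario (\ref{scenario_1}) only $\zeta = \xi\gamma - \chi\alpha \geq 0$ is assumed, so $\zeta$ may vanish, and in that case the bound is vacuous: your deduction $\int_0^t s^\lambda\|\ue(\cdot,s)\|_{L^2(\Omega)}^2\d s \leq C$, and hence the pigeonhole choice of a time $t_*\in(0,t_0)$ with a uniform-in-$\eps$ $L^2$ bound, is unjustified when $\zeta = 0$. (A smaller cosmetic point: the pigeonhole time $t_*$ depends on $\eps$ and should be confined to, say, $(t_0/2,t_0)$ so that $\int_{t_*}^{t_1} h$ stays uniformly bounded; that part is harmless, but the $\zeta=0$ case is not.)

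The paper avoids needing any starting value at all: it tests the first equation in (\ref{problem_simplified}) with $(t-\tfrac{t_0}{2})^2\ue$, so that the weighted quantity $y_\eps(t) = 1 + (t-\tfrac{t_0}{2})^2\int_\Omega\ue^2$ satisfies $y_\eps(\tfrac{t_0}{2}) = 1$ identically, and then runs exactly your Gronwall comparison with the coefficient $1+\int_\Omega|\grad\ze|^4$, which is integrable on $(\tfrac{t_0}{2},t_1)$ by \Cref{corollary:energy} irrespective of the sign or vanishing of $\zeta$. To repair your version you would either have to adopt such a time-weighted test function, or treat the case $\zeta = 0$ separately (there the second equation in (\ref{problem_simplified}) decouples from $\ue$, so much stronger bounds on $\grad\ze$ are available by semigroup or elliptic-type arguments, as the paper does in a different context in \Cref{lemma:uniform_u_continuity_help}); as written, the proof only covers $\zeta>0$.
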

\begin{proof}
	We begin by fixing $t_1 > t_0 > 0$ and then use \Cref{corollary:energy} to further fix $K_1 \equiv K_1(t_0, t_1) > 0$ such that 
	\[
		\int_{\frac{t_0}{2}}^{t_1} \int_\Omega |\grad \ze|^4 \leq K_1
	\]
	for all $\eps \in (0,1)$.
	\\[0.5em]
	We now test the first equation in (\ref{problem_simplified}) with $(t - \frac{t_0}{2})^2\ue$ to gain 
	\begin{align*}
		\frac{\d}{\d t} \left[ \frac{(t - \frac{t_0}{2})^2}{2}\int_\Omega \ue^2 \right] 	&= -(t - \tfrac{t_0}{2})^2 \int_\Omega |\grad \ue|^2 - (t - \tfrac{t_0}{2})^2\int_\Omega \ue \grad \ue \cdot \grad \ze + (t-\tfrac{t_0}{2})\int_\Omega \ue^2 \\
		&\leq -\frac{(t - \tfrac{t_0}{2})^2}{2} \int_\Omega |\grad \ue|^2 + \frac{(t - \tfrac{t_0}{2})^2}{2}\int_\Omega \ue^2 |\grad \ze|^2 + (t-\tfrac{t_0}{2})\int_\Omega \ue^2 \\
		&\leq  -\frac{(t - \tfrac{t_0}{2})^2}{2} \int_\Omega |\grad \ue|^2 + \frac{(t - \tfrac{t_0}{2})^2}{2}\left(\int_\Omega \ue^4\right)^\frac{1}{2} \left( \int_\Omega |\grad \ze|^4 \right)^\frac{1}{2} + (t-\tfrac{t_0}{2})\int_\Omega \ue^2 \numberthis \label{eq:u2_test}
	\end{align*}
	for all $t\in(\frac{t_0}{2}, t_1)$ and $\eps \in (0,1)$ by applying the Young and Hölder inequalities.
	\\[0.5em]
	Using the Gagliardo--Nirenberg inequality combined with the mass conservation property (\ref{eq:mass_conservation}), we now fix $K_2 > 1$ such that  
	\begin{align*}
		\int_\Omega \ue^2 = \|\ue\|^2_\L{2} &\leq K_2 \|\grad \ue\|_\L{2} + K_2 = K_2 \sqrt{\int_\Omega |\grad \ue|^2} + K_2 \\
		&\leq \frac{(t-\frac{t_0}{2})}{4} \int_\Omega |\grad \ue|^2 + (t-\tfrac{t_0}{2})^{-1} K_2^2 + K_2 
	\end{align*}
	and 
	\begin{align*}
		\int_\Omega \ue^4 = \|\ue\|^4_\L{4} \leq K_2^2 \|\grad \ue\|^2_\L{2}\|\ue\|^2_\L{2} + K_2^2 = K_2^2 \left( \int_\Omega |\grad \ue|^2 \right) \left( \int_\Omega \ue^2 \right) + K_2^2  
	\end{align*}
	for all $t \in (\tfrac{t_0}{2},t_1)$ and $\eps \in (0,1)$. Applying these inequalities to (\ref{eq:u2_test}) then yields
	\begin{align*}
		\frac{\d}{\d t} \left[ \frac{(t - \frac{t_0}{2})^2}{2}\int_\Omega \ue^2 \right] &\leq -\frac{(t - \tfrac{t_0}{2})^2}{4} \int_\Omega |\grad \ue|^2  + \frac{K_2(t - \tfrac{t_0}{2})^2}{2}   \left(\int_\Omega |\grad \ue|^2 \right)^\frac{1}{2} \left(\int_\Omega \ue^2\right)^\frac{1}{2}\left( \int_\Omega |\grad \ze|^4 \right)^\frac{1}{2} 
		\\
		&+ \frac{K_2(t - \tfrac{t_0}{2})^2}{2} \left( \int_\Omega |\grad \ze|^4 \right)^\frac{1}{2} + K_3 \\
		&\leq K_4 (t - \tfrac{t_0}{2})^2  \left( \int_\Omega |\grad \ze|^4 \right) \left(\int_\Omega \ue^2 \right) + K_4\left( \int_\Omega |\grad \ze|^4 \right) + K_4 \\
		&=  K_4 \left( \int_\Omega |\grad \ze|^4 \right) \left( 1 +  (t - \tfrac{t_0}{2})^2\int_\Omega \ue^2 \right) + K_4
	\end{align*}
	for all $t \in (\tfrac{t_0}{2},t_1)$ and $\eps \in (0,1)$ with $K_3 \equiv K_3(t_0, t_1) \defs K_2^2 + (t_1 - \frac{t_0}{2}) K_2$ and $K_4 \equiv K_4(t_0, t_1) \defs  \max(\frac{K_2^2}{4}, K_3 + (t_1 - \tfrac{t_0}{2})^4)$. Setting $y_\eps(t) \defs 1 + (t - \frac{t_0}{2})^2\int_\Omega \ue^2(\cdot, t)$ for all $t\in[\tfrac{t_0}{2},t_1)$, the above implies that the function $y_\eps$ satisfies
	\[
		y_\eps'(t) \leq 2K_4 \left( 1 + \int_\Omega |\grad \ze(\cdot ,t)|^4 \right)y_\eps(t)
	\]
	for all $t \in (\tfrac{t_0}{2},t_1)$ and $\eps \in (0,1)$ as well as $y_\eps(\tfrac{t_0}{2}) = 1$ for all $\eps \in (0,1)$. By a standard comparison with the explicit solution to the differential equality corresponding to the above inequality, this then yields 
	\begin{align*}
		(t - \tfrac{t_0}{2})^2 \int_\Omega \ue^2(\cdot, t) &\leq 1 + (t - \tfrac{t_0}{2})^2 \int_\Omega \ue^2(\cdot, t) = y_\eps(t)\\
		&\leq \exp\left( 2K_4(t-\tfrac{t_0}{2}) + 2K_4\int_\frac{t_0}{2}^t \int_\Omega |\grad \ze(\cdot, s)|^4 \d s  \right) \\
		&\leq \exp\left( 2K_4(t_1-\tfrac{t_0}{2})  + 2K_1K_4\right) \sfed K_5
	\end{align*}
	for all $t \in (\tfrac{t_0}{2},t_1)$ and $\eps \in (0,1)$. Thus
	\[
		\int_\Omega \ue^2(\cdot, t) \leq \frac{4K_5}{t_0^2}
	\]
	for all $t \in (t_0,t_1)$ and $\eps \in (0,1)$, which is sufficient to complete the proof.
\end{proof}

\subsection{The parabolic-elliptic case}

We now turn our attention to the parabolic-elliptic case.
Here, our first step again entails the derivation of bounds for $\ve$, $\we$ and $\ze$ up to $t=0$ as a baseline for future interpolation. This time our argument is naturally based on elliptic regularity theory as opposed to its parabolic counterparts. 
\begin{lemma}\label{lemma:baseline_elliptic}
	Assume we are in Scenario (\ref{scenario_2}) with $C_{\mathrm{S2}} \leq \Capprox$ and $\Capprox$ as in \Cref{lemma:approx_exist} or in Scenario (\ref{scenario_3}). Then for each $p \in (1, \frac{n}{n-1})$, there exists $C_1 \equiv C_1(p) > 0$ such that 
	\[
		\|\ve(\cdot, t)\|_{W^{1,p}(\Omega)}  \leq C_1, \;\;\;\; \|\we(\cdot,t)\|_{W^{1,p}(\Omega)}  \leq C_1, \;\;\;\; \|\ze(\cdot, t)\|_{W^{1,p}(\Omega)} \leq C_1	
	\]
	and, for each $q \in (1, \frac{n}{n-2})$, there exists $C_2 \equiv C_2(q) > 0$ such that 
	\[
		\|\ve(\cdot, t)\|_\L{q} \leq C_2, \;\;\;\; \|\we(\cdot, t)\|_\L{q} \leq C_2, \;\;\;\; \|\ze(\cdot, t)\|_\L{q} \leq C_2	
	\]
	for all $t\in[0,\infty)$ and $\eps \in (0,1)$.
\end{lemma}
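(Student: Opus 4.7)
The plan is to apply classical elliptic regularity theory for equations with $L^1$-source terms to the stationary second and third equations in (\ref{problem}). Since $\tau = 0$ in both (\ref{scenario_2}) and (\ref{scenario_3}), these reduce to
\begin{equation*}
    -\laplace \ve + \beta \ve = \alpha \ue \stext{ and } -\laplace \we + \delta \we = \gamma \ue \stext{ on } \Omega,
\end{equation*}
with homogeneous Neumann boundary conditions. By the mass conservation property (\ref{eq:mass_conservation}) of \Cref{lemma:approx_exist}, the right-hand sides are uniformly bounded in $L^1(\Omega)$ by $\alpha m$ and $\gamma m$, respectively, uniformly in $\eps \in (0,1)$ and $t \in [0,\infty)$.

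The key ingredient is the regularity result already invoked in the proof of \Cref{lemma:approx_exist}, namely \cite[Lemma 23]{BrezisSemilinearSecondorderElliptic1973} (or the closely related Brezis--Strauss framework), which guarantees that weak solutions of elliptic equations of the above type with $L^1$-data and Neumann boundary conditions belong to $W^{1,p}(\Omega)$ for every $p \in (1, \tfrac{n}{n-1})$, with their $W^{1,p}$-norm controlled linearly by the $L^1$-norm of the source and a constant depending only on $p$ and $\Omega$. Coercivity of $-\laplace + \beta$ and $-\laplace + \delta$ on Neumann data is ensured by the positivity of $\beta$ and $\delta$, so the reference applies directly. This yields the first family of bounds for $\ve$ and $\we$.

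For the $L^q$-bounds with $q \in (1, \tfrac{n}{n-2})$, I would either invoke the same reference (which typically produces $L^q$-estimates up to this exponent for $L^1$-source data) or, equivalently, combine the already-established $W^{1,p}$-bound with the Sobolev embedding $W^{1,p}(\Omega) \hookrightarrow L^\frac{np}{n-p}(\Omega)$; as $p$ ranges over $(1, \tfrac{n}{n-1})$, the Sobolev exponent $\tfrac{np}{n-p}$ sweeps out the interval $(\tfrac{n}{n-1}, \tfrac{n}{n-2})$, and complementing this with a trivial bound on the low end via H\"older's inequality covers the full stated range. The analogous bounds for $\ze$ then follow immediately from its definition $\ze = \xi \we - \chi \ve$ and the triangle inequality, as $\ze$ is merely a linear combination of $\ve$ and $\we$.

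I do not foresee any serious obstacle in this lemma: the argument is essentially a citation of a well-known piece of linear elliptic theory applied twice, followed by a trivial linear combination. The only point requiring mild care is that the statement is uniform in $t$ and $\eps$, which is automatic here because the constants in the Brezis-type estimate depend only on $p$, $\Omega$ and the coefficients $\beta$, $\delta$, while the $L^1$-bound on the source is $\eps$- and $t$-independent by (\ref{eq:mass_conservation}).
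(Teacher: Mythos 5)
Your proposal is correct and follows essentially the same route as the paper: the paper's proof likewise cites the mass conservation and boundedness properties (\ref{eq:mass_conservation}), (\ref{eq:mass_bound}) together with the $L^1$-source elliptic regularity result of \cite[Lemma 23]{BrezisSemilinearSecondorderElliptic1973} applied to the second and third equations in (\ref{problem}), Sobolev embedding for the $L^q$-bounds, and the fact that $\ze$ is a linear combination of $\ve$ and $\we$.
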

\begin{proof}
	This is a direct consequence of the mass conservation and boundedness properties in (\ref{eq:mass_conservation}) and (\ref{eq:mass_bound}) combined with the elliptic regularity theory for $L^1(\Omega)$ source terms from \cite[Lemma 23]{BrezisSemilinearSecondorderElliptic1973} applied to the second and third equation in (\ref{problem}) and the embedding properties of Sobolev spaces as well as the fact that $\ze$ is merely a linear combination of $\ve$ and $\we$.
\end{proof}
\noindent
As the system (\ref{problem}) in its elliptic variant will present us with very similar structural challenges and advantages as the system discussed in \cite{HeihoffExistenceGlobalSmooth2021}, we will largely follow the approach from the aforementioned reference to derive the key result of this section. To not unnecessarily reiterate already established reasoning, we will give the following argument in a complete but fairly swift fashion and refer the reader to \cite{HeihoffExistenceGlobalSmooth2021} for some of the more involved parameter calculations.
\\[0.5em]
This argument is also where most of the assumptions in Scenario (\ref{scenario_2}) and (\ref{scenario_3}) become important. Notably, it is here where the final value of $C_{\mathrm{S2}}$ is being fixed as a refinement of our choice of $\Capprox$ in \Cref{lemma:approx_exist}.
\begin{lemma}\label{lemma:elliptic_bound}
	There exists a constant $C_{\mathrm{S2}} > 0$ smaller than the constant $\Capprox$ introduced in \Cref{lemma:approx_exist} such that, if we are in Scenario (\ref{scenario_2}) with said constant $C_{\mathrm{S2}}$ or in Scenario (\ref{scenario_3}), the following holds:
	\\[0.5em]
	For each $T > 0$, there exists $C \equiv C(T) > 0$ such that 
	\[
		\int_\Omega \ue^p(\cdot,t) \leq Ct^{-\frac{n(p-1)}{2}}	
	\]
	for $p\in\{n, \frac{5}{2}, \frac{3\uparam}{4\uparam-3}\}$ and, if $u_0 \in L^\uparam(\Omega)$,
	\[
		\|\ue(\cdot, t)\|_\L{\uparam} \leq C	
	\]
	for all $t \in (0,T)$ and $\eps \in (0,1)$.
\end{lemma}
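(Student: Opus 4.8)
The plan is to adapt the $L^p$-testing scheme from \cite{HeihoffExistenceGlobalSmooth2021} to the system (\ref{problem}) in its elliptic form, exploiting the fact that in the parabolic-elliptic case the Laplacians of $\ve$ and $\we$ can be eliminated pointwise. Concretely, I would fix $p > 1$ and test the first equation in (\ref{problem}) with $\ue^{p-1}$; after two partial integrations the cross-diffusion contributions become $-\tfrac{\chi}{p}\int_\Omega \ue^p \laplace \ve + \tfrac{\xi}{p}\int_\Omega \ue^p \laplace \we$, and using the elliptic second and third equations in (\ref{problem}) to substitute $\laplace \ve = \beta\ve - \alpha\ue$ and $\laplace \we = \delta\we - \gamma\ue$ this turns into
\[
	\frac{1}{p(p-1)}\frac{\d}{\d t}\int_\Omega \ue^p + \frac{4}{p^2}\int_\Omega |\grad \ue^{p/2}|^2 = -\frac{\zeta}{p}\int_\Omega \ue^{p+1} - \frac{\chi\beta}{p}\int_\Omega \ve\ue^p + \frac{\xi\delta}{p}\int_\Omega \we\ue^p,
\]
exactly as in the proof of \Cref{lemma:approx_exist}. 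In Scenario (\ref{scenario_3}) we have $\zeta \geq 0$, so the first term on the right is harmless; in Scenario (\ref{scenario_2}) we have $\zeta \geq -C_{\mathrm{S2}}/m$, so this term is bounded by $\tfrac{C_{\mathrm{S2}}}{mp}\int_\Omega \ue^{p+1}$, and the smallness of $C_{\mathrm{S2}}$ — to be fixed here as a refinement of $\Capprox$ — will let the Gagliardo--Nirenberg inequality of the form (\ref{eq:gni_exist}) absorb it into the good dissipative term $\int_\Omega|\grad\ue^{p/2}|^2$ with room to spare.

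The remaining work is to control $\int_\Omega \ve\ue^p$ and $\int_\Omega \we\ue^p$ and to close a super-linear-decay differential inequality for $y_\eps(t) \defs \int_\Omega \ue^p$. For the chemical terms I would use Hölder's inequality to split $\int_\Omega \we\ue^p \leq \|\we\|_\L{q}\|\ue\|_\L{p q'}^{p}$ with $q < \tfrac{n}{n-2}$ chosen as large as the integrability of $\we$ from \Cref{lemma:baseline_elliptic} permits, then interpolate $\|\ue\|_\L{pq'}$ between $L^1$ (mass conservation, (\ref{eq:mass_conservation})) and the Sobolev-embedded target of $\|\grad\ue^{p/2}\|_\L{2}$, and finally apply Young's inequality so that the leading part is again absorbed by $-\tfrac{4}{p^2}\int_\Omega|\grad\ue^{p/2}|^2$. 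The outcome should be an estimate of the shape
\[
	y_\eps'(t) \leq -C_1 y_\eps(t)^{\theta} + C_2
\]
with $\theta > 1$ (the super-linearity coming from the $p+1$ versus $p$ mismatch of powers and the mass normalization), whence an ODE comparison gives $y_\eps(t) \leq C t^{-1/(\theta-1)}$, and the exponent $\tfrac{1}{\theta-1}$ works out to the claimed $\tfrac{n(p-1)}{2}$ after tracking the Gagliardo--Nirenberg interpolation exponents. One runs this for $p = n$ first (this is the bound needed for the Section 4 bootstrap), and then, observing that the argument only needs $\ue$ bounded in some $L^{p_0}(\Omega)$ up to $t=0$ or a sufficiently mild decay thereof, feeds the output back in to reach $p = \tfrac{5}{2}$ and $p = \tfrac{3\uparam}{4\uparam-3}$; the latter two are tuned for the later Hölder-regularity steps, and $\tfrac{3\uparam}{4\uparam-3} \in (1,2)$ makes sense precisely because $\uparam \in (1,2)$. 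The $t$-independent bound $\|\ue(\cdot,t)\|_\L{\uparam} \leq C$ when $u_0 \in L^\uparam(\Omega)$ is obtained the same way but starting from the genuinely $\eps$-uniform initial bound $\|u_{0,\eps}\|_\L{\uparam} \leq C$ guaranteed by (\ref{eq:approx_u0_higher_properties}), so that the ODE comparison yields a constant rather than a $t^{-\lambda}$ blow-up near zero.

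The main obstacle I anticipate is the bookkeeping in $n = 3$: there $q$ is confined to $(1,3)$, so $q'$ stays above $\tfrac{3}{2}$, and one must check that the Gagliardo--Nirenberg interpolation exponent $\theta = \theta(n,p,q)$ for $\|\ue\|_\L{pq'}$ in terms of $\|\grad\ue^{p/2}\|_\L{2}$ and $\|\ue^{p/2}\|_\L{2/p}$ genuinely lands strictly below $2$ (so the Young step can absorb it) and simultaneously produces an exponent on $y_\eps$ that is strictly larger than $1$ — this is the same delicate balance carried out in \cite{HeihoffExistenceGlobalSmooth2021}, and is why the dimension restriction $n \leq 3$ is forced in Scenario (\ref{scenario_3}). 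A secondary subtlety is that for Scenario (\ref{scenario_2}) the $\zeta\int_\Omega\ue^{p+1}$ term and the $\we$-term compete for the same dissipation, so the value of $C_{\mathrm{S2}}$ must be chosen small enough for \emph{both} (in particular for the specific finite list $p\in\{n,\tfrac52,\tfrac{3\uparam}{4\uparam-3}\}$, which with $n=2$ and $\uparam=2$ reduces to $\{2,\tfrac52,\tfrac65\}$), and this is exactly the point at which the final $C_{\mathrm{S2}} \leq \Capprox$ gets pinned down. Everything else — the partial integrations, the Young's-inequality splittings, the ODE comparison — is routine and parallels the already-exhibited computation (\ref{eq:local_exist_test}).
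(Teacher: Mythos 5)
Your plan is sound and, in all structural essentials, is the paper's own argument: test the first equation with $\ue^{p-1}$, eliminate $\laplace\ve$ and $\laplace\we$ through the elliptic second and third equations, absorb the $\zeta$-term in Scenario (\ref{scenario_2}) by choosing $C_{\mathrm{S2}}$ small relative to the Gagliardo--Nirenberg constant in (\ref{eq:ue_pp1_gni}) (and smaller than $\Capprox$), and close a differential inequality with superlinear decay whose comparison argument gives the $t^{-\frac{n(p-1)}{2}}$ bounds, while plain time integration from the $\eps$-uniform bound (\ref{eq:approx_u0_higher_properties}) gives the $L^\uparam(\Omega)$ statement. The one point where you genuinely deviate is the production term: the paper splits $\frac{\xi\delta}{p}\int_\Omega \we\ue^p$ by Young into $\int_\Omega\we^{1+\frac{np}{2}}$ plus a multiple of $\int_\Omega\ue^{p+\frac{2}{n}}$ (absorbed by the dissipation via (\ref{eq:ue_pp1_gni})), and then controls $\int_\Omega\we^{1+\frac{np}{2}}$ by interpolating $\we$ between $W^{2,p}(\Omega)$ and the baseline $L^q(\Omega)$ bound of \Cref{lemma:baseline_elliptic} together with elliptic regularity, so that this term ends up dominated by half of the superlinear decay term; you instead estimate $\int_\Omega\we\ue^p\le\|\we\|_\L{q}\|\ue\|_\L{pq'}^p$ and interpolate $\|\ue\|_\L{pq'}$ between the conserved mass and $\|\grad\ue^{p/2}\|_\L{2}$ — this also closes, since the required Gagliardo--Nirenberg exponent stays strictly below $2$ exactly when $q>\frac n2$, which \Cref{lemma:baseline_elliptic} permits because $\frac n2<\frac{n}{n-2}$, and it is if anything more economical as it never needs $\we$ expressed through $\ue$ (note also that the $\ve$-term carries a favourable sign and can simply be dropped). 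Three corrections to your bookkeeping: the superlinearity of the ODE cannot come from the ``$p+1$ versus $p$ mismatch'' — in (\ref{scenario_2}) the $\int_\Omega\ue^{p+1}$ term has the wrong sign and in (\ref{scenario_3}) one may have $\zeta=0$ — rather it comes from the retained fraction of $\int_\Omega|\grad\ue^{p/2}|^2$, which by Gagliardo--Nirenberg and mass conservation dominates $c\left(\int_\Omega\ue^p\right)^{1+\frac{2}{n(p-1)}}-C$, and this is what produces the exponent $\frac{n(p-1)}{2}$; no feeding of the $p=n$ output back in is needed (each $p$ in the finite list is run independently, using only mass conservation and the baseline bound for $\we$, which is also why the estimates are $\eps$-uniform and initial-data-independent via the ODE comparison of \cite[Lemma 3.1]{HeihoffExistenceGlobalSmooth2021}); and $\frac{3\uparam}{4\uparam-3}$ ranges over $(\frac 65,3)$ for $\uparam\in(1,2)$, not over $(1,2)$.
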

\begin{proof}
	To make sure that our choice of $C_{\mathrm{S2}}$ is sound, we will now fix $C_{\mathrm{S2}}$ before starting our argument proper. To this end, we fix $K_1 > 0$ such that 
	\begin{equation}\label{eq:ue_pp1_gni}
		\int_\Omega \ue^{p+\frac{2}{n}} \leq K_1 m^\frac{2}{n} \int_\Omega |\grad \ue^\frac{p}{2}|^2 + K_1 m^{p+\frac{2}{n}}
	\end{equation}
	for all $t\in(0,\infty)$, $\eps \in (0,1)$ and $p\in\{n, \frac{5}{2}, \uparam, \frac{3\uparam}{4\uparam-3}\}$ using the consequence of the Gagliardo--Nirenberg inequality laid out in \cite[Lemma 4.2]{HeihoffExistenceGlobalSmooth2021}. We then choose $C_{\mathrm{S2}} > 0$ such that $C_{\mathrm{S2}} \leq \frac{2}{p K_1}$ for all $p\in\{n, \frac{5}{2}, \uparam, \frac{3\uparam}{4\uparam-3}\}$ and such that it is smaller than the constant $\Capprox$ introduced in \Cref{lemma:approx_exist}.
	\\[0.5em]
	Similar to some of the arguments presented in \Cref{lemma:approx_exist} and due to the fact that both of our considered scenarios are of parabolic-elliptic type, we begin by testing the first equation in (\ref{problem}) with $\ue^{p-1}$ and then use the second and third equation in (\ref{problem}) to gain that 
	\begin{align*}
		\frac{1}{p(p-1)} \frac{\d}{\d t} \int_\Omega \ue^p &= -\frac{4}{p^2} \int_\Omega |\grad \ue^\frac{p}{2}|^2 - \frac{\chi}{p} \int_\Omega \ue^{p}\laplace \ve + \frac{\xi}{p} \int_\Omega \ue^{p}\laplace \we \\
		&= -\frac{4}{p^2} \int_\Omega |\grad \ue^\frac{p}{2}|^2 - \frac{\zeta}{p} \int_\Omega \ue^{p+1} - \frac{\chi \beta}{p} \int_\Omega \ve \ue^p + \frac{\xi \delta}{p} \int_\Omega \we \ue^p \\
		&\leq -\frac{4}{p^2} \int_\Omega |\grad \ue^\frac{p}{2}|^2 - \frac{\zeta}{p} \int_\Omega \ue^{p+1} + \frac{\xi \delta}{p} \int_\Omega \we \ue^p
		\label{eq:ue_elliptic_test_1} \numberthis
	\end{align*}
	for all $t \in (0,\infty)$, $\eps \in (0,1)$ and $p \in (1,\infty)$. 
	If we are in Scenario (\ref{scenario_3}), $\zeta \geq 0$ and thus
	\[
		- \frac{\zeta}{p} \int_\Omega \ue^{p+1} \leq 0
	\]
	for all $t \in (0,\infty)$, $\eps \in (0,1)$ and $p \in (1,\infty)$ . If we are in Scenario (\ref{scenario_2}), we can apply (\ref{eq:ue_pp1_gni}) to gain that 
	\[
		- \frac{\zeta}{p} \int_\Omega \ue^{p+1} \leq \frac{C_{\mathrm{S2}}}{m p} \int_\Omega \ue^{p+1} \leq \frac{2}{p^2} \int_\Omega |\grad \ue^\frac{p}{2}|^2 + \frac{2 m^p}{p^2}
	\]
	for all $t \in (0,\infty)$, $\eps \in (0,1)$ and $p\in\{n, \frac{5}{2}, \uparam, \frac{3\uparam}{4\uparam-3}\}$ by our previous choice of $C_{\mathrm{S2}}$ and the fact that in this scenario $n = 2$. As such in both scenarios, we gain
	\begin{equation}\label{eq:ue_elliptic_test_2}
		\frac{1}{p(p-1)} \frac{\d}{\d t} \int_\Omega \ue^p \leq -\frac{2}{p^2} \int_\Omega |\grad \ue^\frac{p}{2}|^2 + \frac{\xi \delta}{p} \int_\Omega \we \ue^p + \frac{2m^p}{p^2}
	\end{equation}
	for all $t\in (0,\infty)$, $\eps \in (0,1)$ and $p\in\{n, \frac{5}{2}, \uparam, \frac{3\uparam}{4\uparam-3}\}$ from (\ref{eq:ue_elliptic_test_1}).
	\\[0.5em]
	By employing Young's inequality again combined with (\ref{eq:ue_pp1_gni}), we further see that 
	\[
		\frac{\xi \delta}{p} \int_\Omega \we \ue^p \leq K_2 \int_\Omega \we^{1+\frac{np}{2}} + \frac{1}{p^2 m^\frac{2}{n} K_1} \int_\Omega \ue^{p + \frac{2}{n}} \leq K_2 \int_\Omega \we^{1+\frac{np}{2}}  + \frac{1}{p^2} \int_\Omega |\grad \ue^\frac{p}{2}|^2 + \frac{m^p}{p^2}
	\]
	with an appropriate $K_2 > 0$, which applied to (\ref{eq:ue_elliptic_test_2}) yields 
	\[
		\frac{1}{p(p-1)} \frac{\d}{\d t} \int_\Omega \ue^p \leq -\frac{1}{p^2} \int_\Omega |\grad \ue^\frac{p}{2}|^2 + K_2 \int_\Omega \we^{1+\frac{np}{2}} + \frac{3m^p}{p^2}
	\]
	for all $t\in (0,\infty)$, $\eps \in (0,1)$ and $p\in\{n, \frac{5}{2}, \uparam, \frac{3\uparam}{4\uparam-3}\}$. By applying a straightforward consequence of the Gagliardo--Nirenberg inequality found in \cite[Lemma 4.2]{HeihoffExistenceGlobalSmooth2021} to the above, we can then find $K_3 > 0$ such that 
	\begin{equation}\label{eq:ue_elliptic_test_3}
		\frac{1}{p(p-1)} \frac{\d}{\d t} \int_\Omega \ue^p \leq -\frac{1}{K_3} \left(\int_\Omega \ue^p \right)^{1+\frac{2}{n(p-1)}} + K_2 \int_\Omega \we^{1+\frac{np}{2}} + K_3
	\end{equation}
	for all $t\in (0,\infty)$, $\eps \in (0,1)$ and $p\in\{n, \frac{5}{2}, \uparam, \frac{3\uparam}{4\uparam-3}\}$. Yet another application of the Gagliardo--Nirenberg inequality in a fashion very similar to \cite[Lemma 4.3]{HeihoffExistenceGlobalSmooth2021} then further yields $K_4 > 0$ such that 
	\[
		\|\we\|_\L{1+\frac{np}{2}} \leq K_4 \|\we\|^\theta_{W^{2, p}}\|\we\|^{1-\theta}_\L{q}
	\]
	with $q \equiv q(p) \defs (n(n-1) + 2\frac{n}{p})(2 + \frac{n}{p})^{-1} \in (1, \frac{n}{n-2})$ and $\theta \equiv \theta(p) \defs \frac{p}{2} (1+\frac{np}{2})^{-1} \in (0,1)$ for all $t \in (0,\infty)$, $\eps \in (0,1)$ and $p\in\{n, \frac{5}{2}, \uparam, \frac{3\uparam}{4\uparam-3}\}$. Combining this with elliptic regularity theory and the baseline established in \Cref{lemma:baseline_elliptic} further yields $K_5, K_6 > 0$ such that 
	\[
		\int_\Omega \we^{1+\frac{np}{2}} \leq K_5 \|\ue\|^\frac{p}{2}_\L{p} = K_5 \left( \int_\Omega \ue^p \right)^\frac{1}{2} \leq \frac{1}{2 K_2 K_3} \left(\int_\Omega \ue^p \right)^{1+\frac{2}{n(p-1)}} 
		+ K_6
	\]
	for all $t \in (0,\infty)$, $\eps\in(0,1)$ and $p\in\{n, \frac{5}{2}, \uparam, \frac{3\uparam}{4\uparam-3}\}$. Applying this to (\ref{eq:ue_elliptic_test_3}) gives us that 
	\[
		\frac{1}{p(p-1)} \frac{\d}{\d t} \int_\Omega \ue^p \leq -\frac{1}{2 K_3} \left(\int_\Omega \ue^p \right)^{1+\frac{2}{n(p-1)}} + K_3 + K_2 K_6
	\]
	for all $t \in (0,\infty)$, $\eps\in(0,1)$ and $p\in\{n, \frac{5}{2}, \uparam, \frac{3\uparam}{4\uparam-3}\}$. From this, our desired results follow directly from either \cite[Lemma 3.1]{HeihoffExistenceGlobalSmooth2021}, which provides us with an initial data independent bound decaying towards zero for ODEs with superlinear decay, or by a straightforward time integration if $u_0 \in L^\uparam(\Omega)$ as this implies that the family $(u_{0,\eps})_{\eps\in(0,1)}$ is uniformly bounded in $L^\uparam(\Omega)$ due to (\ref{eq:approx_u0_higher_properties}). 
\end{proof}

\section{Estimates away from $t=0$ and the construction of our solution candidates}

Our aim for this section is to use the uniform bound for $\ue$ in $L^n(\Omega)$ and the uniform bound for $\grad \ze$ in $L^1(\Omega)$  away from $t=0$, which we established in the previous section for both parabolic-parabolic as well as parabolic-elliptic scenarios, as basis for a bootstrap argument to derive sufficient Hölder bounds for $\ue$, $\ve$ and $\we$ and thus construct a candidate for our desired solution via limit process. Notably, we will strive to prove sufficiently strong bounds such that said limit solutions immediately inherit the classical solution properties from the approximate solutions. To do this, we will generally focus on the more challenging parabolic cases and only give references for how similar results can be achieved in the elliptic cases where applicable.
\\[0.5em] 
We begin this process by first establishing bounds for $\ve$, $\we$ and $\ze$ in all $W^{1,p}(\Omega)$ with finite $p$ by either semigroup methods or elliptic regularity theory. Following this, we then use said bounds for a semigroup based argument to derive a uniform bound for $\ue$ in $L^\infty(\Omega)$.

\begin{lemma}\label{lemma:gradz_bound}
Assume we are in Scenario (\ref{scenario_1}), Scenario (\ref{scenario_2}) with $C_{\mathrm{S2}}$ as in \Cref{lemma:elliptic_bound} or in Scenario (\ref{scenario_3}). Then for each $p \in \left(1, \infty\right)$ and $t_0, t_1 \in \R$ with $t_1 > t_0 > 0$, there exists $C\equiv C(p, t_0, t_1) > 0$ such that
\[
	\|\ve(\cdot,t)\|_{W^{1,p}(\Omega)} \leq C, \;\;\;\; \|\we(\cdot,t)\|_{W^{1,p}(\Omega)} \leq C \stext{ and }\|\ze(\cdot,t)\|_{W^{1,p}(\Omega)} \leq C
\]
for all $t \in (t_0, t_1)$.
\end{lemma}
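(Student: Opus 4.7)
The plan is to handle the three cases essentially in parallel by first establishing a uniform $L^n(\Omega)$ bound for $\ue$ on a slightly larger time interval $(t_0/2, t_1)$ and then feeding this into the appropriate regularity theory for the second and third equations in (\ref{problem}). The needed $L^n$ bound is already available: in Scenario (\ref{scenario_1}) (where $n=2$) this is \Cref{lemma:u2_bound}, while in Scenarios (\ref{scenario_2}) and (\ref{scenario_3}) this is contained in \Cref{lemma:elliptic_bound}, which even gives us uniform $L^n$ bounds on $(t_0/2,t_1)$ (actually stronger, but $L^n$ suffices). Since $\ze = \xi \we - \chi \ve$, it will suffice to prove the desired $W^{1,p}$ bounds for $\ve$ and $\we$ separately.

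In the parabolic-elliptic cases (\ref{scenario_2}) and (\ref{scenario_3}), the argument is essentially immediate: $\ve$ and $\we$ solve the Neumann problems $-\laplace \ve + \beta \ve = \alpha \ue$ and $-\laplace \we + \delta \we = \gamma \ue$, so standard elliptic regularity theory (e.g. from \cite{FriedmanPartialDifferentialEquations1969}) combined with the $L^n(\Omega)$ bound for $\ue$ on $(t_0/2, t_1)$ and the mass bound (\ref{eq:mass_bound}) yields uniform $W^{2,n}(\Omega)$ bounds for $\ve$ and $\we$ on $(t_0, t_1)$. Since $W^{2,n}(\Omega) \hookrightarrow W^{1,p}(\Omega)$ for every $p \in (1,\infty)$ in dimension $n \in \{2,3\}$, this yields the claim in these scenarios.

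For the parabolic-parabolic case (\ref{scenario_1}), we rely on the variation-of-constants representation of the second equation together with the standard smoothing estimates for the Neumann heat semigroup (cf.\ \cite[Lemma 1.3]{WinklerAggregationVsGlobal2010}). Writing
\[
	\grad \ve(\cdot, t) = \grad e^{(t-t_0/2)(\laplace - \beta)} \ve(\cdot, t_0/2) + \alpha \int_{t_0/2}^t \grad e^{(t-s)(\laplace - \beta)} \ue(\cdot, s) \d s
\]
for $t \in (t_0, t_1)$, we estimate the first term in $L^p(\Omega)$ using the baseline $W^{1,r}(\Omega)$ bound from \Cref{lemma:baseline_parabolic} together with the semigroup smoothing from $L^r$ to $W^{1,p}$, where the singularity $(t-t_0/2)^{-\frac{1}{2}-\frac{n}{2}(1/r-1/p)}$ is harmless since $t - t_0/2 \geq t_0/2 > 0$. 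For the Duhamel integral we invoke the $L^2(\Omega)$ bound for $\ue$ from \Cref{lemma:u2_bound} and the estimate
\[
	\|\grad e^{(t-s)(\laplace-\beta)} \ue(\cdot, s)\|_\L{p} \leq K(1 + (t-s)^{-\frac{1}{2} - \frac{n}{2}(\frac{1}{2} - \frac{1}{p})}) e^{-\beta(t-s)}\|\ue(\cdot, s)\|_\L{2}.
\]
Because $n=2$, the exponent equals $-\frac{1}{2} - \frac{1}{2} + \frac{1}{p} = -1 + \frac{1}{p} > -1$ for every finite $p$, so the time integral converges uniformly in $\eps$ and $t \in (t_0, t_1)$. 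Combined with a Poincar\'e-type argument to upgrade the gradient bound to a $W^{1,p}(\Omega)$ bound via the mass control (\ref{eq:mass_bound}), this gives the desired estimate for $\ve$, and the bound for $\we$ follows by an entirely analogous computation using the third equation in (\ref{problem}).

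The main obstacle, if any, is bookkeeping: one must be careful that the baseline $W^{1,r}$ bound at the starting time $t_0/2$ combined with the $L^2$ bound on $(t_0/2, t_1)$ propagates through the Duhamel formula without producing any additional $\eps$-dependence, and that the semigroup estimate is applied with exponents in its valid range (in particular, $p \geq 2$; for $p \in (1,2)$ one passes through H\"older's inequality using the finiteness of $|\Omega|$).
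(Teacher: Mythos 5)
Your proposal is correct and follows essentially the same route as the paper: elliptic regularity plus Sobolev embedding when $\tau=0$, and for $\tau=1$ the Duhamel representation starting at $t_0/2$ with Neumann heat semigroup smoothing, using the $L^n$ bound for $\ue$ away from $t=0$ and then Poincaré with the mass bound to upgrade to $W^{1,p}$. The only cosmetic difference is that the paper estimates the initial term via gradient-to-gradient smoothing from the $L^1$ bound on $\grad\ve(\cdot,t_0/2)$, whereas you smooth the full $W^{1,r}$ baseline norm — both are harmless since $t-t_0/2\geq t_0/2>0$.
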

\begin{proof}
Fix $t_1 > t_0 > 0$ and $p\in(1,\infty)$.
\\[0.5em]
Then according to either \Cref{lemma:baseline_parabolic} and \Cref{lemma:u2_bound} or \Cref{lemma:baseline_elliptic} and \Cref{lemma:elliptic_bound} depending on the scenario, there exists $K_1 \equiv K_1(t_0, t_1) > 0$ such that 
\[
	\|\ue(\cdot, t)\|_\L{n} \leq K_1 \stext{ and } \|\grad \ve(\cdot, t)\|_\L{1} \leq K_1	
\]
for all $t \in [\frac{t_0}{2}, t_1)$. 
\\[0.5em]
If $\tau = 1$, we can use the variation-of-constants representation of the second equation in (\ref{problem}) and well-known smoothing properties of the Neumann heat semigroup (cf.\ \cite[Lemma 1.3]{WinklerAggregationVsGlobal2010}) to gain a constant $K_2 > 0$ such that 
\begin{align*} 
	\|\grad \ve(\cdot,t)\|_\L{p} &= \left\|\, \grad e^{\left(t-\frac{t_0}{2}\right)(\laplace - \beta)}\ve(\cdot, \tfrac{t_0}{2}) + \alpha\int_{t_0/2}^t \grad e^{(t-s)(\laplace - \beta)}\ue(\cdot, s) \d s \,\right\|_\L{p} \\
	&\leq K_1 K_2 \left(1 + \left(t - \tfrac{t_0}{2}\right)^{\frac{n}{2p}-\frac{n}{2}}\right) + \alpha K_1 K_2\int_{t_0/2}^t \left( 1+ (t-s)^{\frac{n}{2p}- 1}\right)\d s \\
	&\leq K_1K_2 \left(1 + \left(\tfrac{t_0}{2}\right)^{\frac{n}{2p}-\frac{n}{2}}\right) + \alpha K_1 K_2 (t_1 - \tfrac{t_0}{2}) + \alpha K_1K_2\tfrac{2p}{n}(t_1 - \tfrac{t_0}{2})^\frac{n}{2p} 
\end{align*}
for all $t\in(t_0, t_1)$ and $\eps \in (0,1)$, which gives us the first of our desired bounds when combined with the mass boundedness property (\ref{eq:mass_bound}) and the Poincaré inequality. Essentially the same argument can be applied to $\we$ to gain the second bound. As $\ze$ is merely a linear combination of $\ve$ and $\we$, the bound for $\ze$ then follows immediately and completes the proof.
\\[0.5em]
If $\tau = 0$, the same bounds follow directly from elliptic regularity theory (cf.\ \cite{FriedmanPartialDifferentialEquations1969}) combined with the Sobolev embedding theorem. 
\end{proof}

\begin{lemma}\label{lemma:u_linfty_bound}
	Assume we are in Scenario (\ref{scenario_1}), Scenario (\ref{scenario_2}) with $C_{\mathrm{S2}}$ as in \Cref{lemma:elliptic_bound} or in Scenario (\ref{scenario_3}).
	Then for each $t_0, t_1 \in \R$ with $t_1 > t_0 > 0$, there exists $C \equiv C(t_0, t_1) > 0$ such that 
	\[
		\|\ue(\cdot, t)\|_\L{\infty}\leq C
	\]
	for all $t \in (t_0, t_1)$ and $\eps \in (0,1)$.
\end{lemma}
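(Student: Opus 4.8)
The plan is to derive the $L^\infty$ bound for $\ue$ via the variation-of-constants representation corresponding to the first equation in (\ref{problem_simplified}), namely
\[
	\ue(\cdot, t) = e^{(t-t_\ast)\laplace}\ue(\cdot, t_\ast) + \int_{t_\ast}^t e^{(t-s)\laplace}\div\bigl(\ue(\cdot, s)\grad\ze(\cdot, s)\bigr)\d s
\]
for a suitable $t_\ast \in (0, t_0)$, say $t_\ast = t_0/2$, so that all the input bounds from \Cref{lemma:u2_bound}, \Cref{lemma:elliptic_bound} and \Cref{lemma:gradz_bound} are available on $[t_0/2, t_1)$. First I would fix $t_1 > t_0 > 0$ and use \Cref{lemma:gradz_bound} to pick some large exponent $q$ (depending only on $n$, say $q > 2n$) such that $\|\grad\ze(\cdot, t)\|_\L{q} \leq K_1$ on $[t_0/2, t_1)$, and simultaneously use the $L^n$ (parabolic-elliptic) respectively $L^2$ (parabolic-parabolic, via \Cref{lemma:u2_bound}) bound for $\ue$ as the seed of a bootstrap.

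The core is a standard Moser--Alikakos-type iteration: setting $M_\rho(t) \defs \sup_{s\in[t_0/2, t]}\|\ue(\cdot, s)\|_\L{\rho}$, one applies the $L^\rho$--$L^{p}$ smoothing estimate for $\grad\cdot e^{\sigma\laplace}$ (cf.\ \cite[Lemma 1.3]{WinklerAggregationVsGlobal2010}), together with Hölder's inequality to split $\|\ue\grad\ze\|_\L{\theta} \leq \|\ue\|_\L{p}\|\grad\ze\|_\L{q}$ with $\frac1\theta = \frac1p + \frac1q$, to obtain a recursive estimate of the form
\[
	M_{p}(t) \leq K_2\bigl(\tfrac{t_0}{2}\bigr)^{-a} M_p\bigl(\tfrac{t_0}{2}\bigr)_{\text{(initial layer)}} + K_3\int_{t_0/2}^t \bigl(1 + (t-s)^{-\kappa}\bigr)M_{p'}(s)\d s
\]
where $p' < p$ is the previous exponent in the iteration and $\kappa = \kappa(n, p, q) < 1$ as long as $q$ was chosen large enough (this is exactly where $q > 2n$ is used to keep the singularity integrable uniformly along the iteration). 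Since each step strictly increases the integrability exponent by a fixed factor bounded away from $1$, finitely many steps take us from $L^n$ (or $L^2$) to $L^\infty$, and because the integral kernel $(1+(t-s)^{-\kappa})$ is integrable on the bounded interval $[t_0/2, t_1)$, all the constants stay uniform in $\eps \in (0,1)$ and in $t \in (t_0, t_1)$. Alternatively, one may invoke a ready-made statement such as the $L^\infty$ bootstrap lemma in the style of \cite{WinklerAggregationVsGlobal2010} to compress this; the parabolic-elliptic cases can also be handled by citing the analogous argument in \cite{HeihoffExistenceGlobalSmooth2021}.

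The main obstacle is bookkeeping the time-decay factors: the seed bounds from \Cref{lemma:u2_bound} and \Cref{lemma:elliptic_bound} are only uniform \emph{away} from $t=0$, so the argument must start the semigroup representation at $t_0/2$ rather than at $0$, and one must check that the powers of $t_0$ accumulated over the finitely many iteration steps remain finite and depend only on $t_0$ (and $n$), not on $\eps$. A secondary point requiring a little care is that, in the parabolic-parabolic case, the seed is merely $L^2$, which in dimension $n=2$ is just barely enough to start the iteration with an integrable kernel exponent; choosing $q$ in \Cref{lemma:gradz_bound} sufficiently large resolves this comfortably. Once the iteration closes, the stated bound follows with $C \equiv C(t_0, t_1)$.
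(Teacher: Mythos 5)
Your proposal is correct in substance, but it closes the argument by a different mechanism than the paper. Both proofs start from the same ingredients: the Duhamel representation of the first equation in (\ref{problem_simplified}) launched at $t_0/2$, the heat-semigroup smoothing estimates of \cite[Lemma 1.3]{WinklerAggregationVsGlobal2010}, the seed bound $\|\ue\|_\L{n}\leq K$ from \Cref{lemma:u2_bound} resp.\ \Cref{lemma:elliptic_bound}, and the gradient bound from \Cref{lemma:gradz_bound}. The paper, however, does not iterate through intermediate exponents at all: it fixes $\grad\ze$ in $L^{4n}(\Omega)$, interpolates $\|\ue\|_\L{4n}\leq\|\ue\|_\L{n}^{1/4}\|\ue\|_\L{\infty}^{3/4}$, and works with the time-weighted quantity $M_\eps=\sup_{t}\|(t-\tfrac{t_0}{2})^{1/2}\ue(\cdot,t)\|_\L{\infty}$, which after one application of Duhamel yields $M_\eps\leq K M_\eps^{3/4}+K$ and hence a bound by absorption of the sublinear power. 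This is a one-shot argument, but it needs the a priori finiteness of $M_\eps$ (available here since the approximate solutions are smooth) and the clean handling of the double singularity $(t-s)^{-3/4}(s-\tfrac{t_0}{2})^{-3/8}$. Your Moser--Alikakos-style bootstrap instead climbs $L^n\to L^{p_1}\to\cdots\to L^\infty$ in finitely many steps (with $q>2n$ two steps already suffice), which avoids any self-referential absorption and any finiteness assumption on the unknown sup, at the cost of the time-shift bookkeeping you correctly identify: each step must either restart Duhamel on a slightly later nested interval or carry a time weight, since the previous step's bound degenerates at its own starting time. One small slip in your displayed recursion: the initial-layer term should involve the \emph{lower} norm $M_{p'}(\tfrac{t_0}{2})$ (or directly the $L^n$ seed) with the corresponding $L^{p'}\!-\!L^{p}$ smoothing factor, not $M_p(\tfrac{t_0}{2})$, which is not yet known to be uniformly bounded at that stage; with that correction and the nested-interval (or weighted) bookkeeping made explicit, your route goes through and delivers the same conclusion with $C\equiv C(t_0,t_1)$ independent of $\eps$.
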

\begin{proof}
	Fix $t_1 > t_0 > 0$.
	\\[0.5em]
	Then according to \Cref{lemma:u2_bound} or \Cref{lemma:elliptic_bound} as well as \Cref{lemma:gradz_bound}, there exists $K_1 \equiv K_1(t_0, t_1) > 0$ such that 
	\[
		\|\ue(\cdot, t)\|_\L{n} \leq K_1 \stext{ and } \|\grad \ze(\cdot, t)\|_\L{4n} \leq K_1	
	\]
	for all $t \in [\frac{t_0}{2}, t_1)$. We then further define 
	\[
		M_\eps \defs \sup_{t\in(t_0/2,t_1)} \|(t-\tfrac{t_0}{2})^\frac{1}{2}  \ue(\cdot,t)\|_\L{\infty} < \infty.
	\] 
	Using the variation-of-constants representation for the first equation in (\ref{problem_simplified}) combined with the smoothing properties of the Neumann heat semigroup (cf.\ \cite[Lemma 1.3]{WinklerAggregationVsGlobal2010}) yields a constant $K_2 > 0$ such that 
	\begin{align*}
		&\hphantom{= }\;\left\|(t-\tfrac{t_0}{2})^\frac{1}{2}\ue(\cdot,t)\right\|_\L{\infty} \\
		&= \left\| \,(t-\tfrac{t_0}{2})^\frac{1}{2}e^{\left(t-\tfrac{t_0}{2}\right)\laplace}\ue(\cdot, \tfrac{t_0}{2}) + (t-\tfrac{t_0}{2})^\frac{1}{2}\int_{t_0/2}^t e^{(t-s)\laplace} \div( \ue(\cdot, s) \grad \ze(\cdot, s)) \d s \,\right\|_\L{\infty} \\
		&\leq K_2  \|\ue(\cdot, \tfrac{t_0}{2})\|_\L{n} + K_2(t-\tfrac{t_0}{2})^\frac{1}{2}\int_{t_0/2}^t  \left(1 + (t-s)^{-\frac{3}{4}}\right)\|\ue(\cdot, s) \grad \ze(\cdot, s)\|_\L{2n} \d s \\
		&\leq K_1 K_2 + K_1 K_2 (t - \tfrac{t_0}{2})^\frac{1}{2} \int_{t_0/2}^t \left(1 +(t-s)^{-\frac{3}{4}}\right) \|\ue(\cdot, s)\|^\frac{1}{4}_\L{n} \|\ue(\cdot, s)\|_\L{\infty}^\frac{3}{4} \d s \\
		&\leq K_1 K_2 + K_1^\frac{5}{4} K_2 (t - \tfrac{t_0}{2})^\frac{1}{2} M_\eps^\frac{3}{4} \int_{t_0/2}^t \left( 1 +  (t-s)^{-\frac{3}{4}}\right) (s-\tfrac{t_0}{2})^{-\frac{3}{8}} \d s
	\end{align*}
	for all $t \in (\tfrac{t_0}{2}, t_1)$ and $\eps \in (0,1)$. As further 
	\begin{align*}
		&\int_{t_0/2}^t \left( 1 + (t-s)^{-\frac{3}{4}}\right) (s-\tfrac{t_0}{2})^{-\frac{3}{8}} \d s \\
		&= \int_{t_0/2}^{t_\text{mid}} \left( 1 + (t-s)^{-\frac{3}{4}}\right) (s-\tfrac{t_0}{2})^{-\frac{3}{8}} \d s + \int_{t_\text{mid}}^t \left( 1 + (t-s)^{-\frac{3}{4}}\right) (s-\tfrac{t_0}{2})^{-\frac{3}{8}} \d s \\
		&\leq  \left( 1 + (\tfrac{t}{2}-\tfrac{t_0}{4})^{-\frac{3}{4}}\right) \int_{t_0/2}^{t_{\text{mid}}}  (s-\tfrac{t_0}{2})^{-\frac{3}{8}} \d s+ (\tfrac{t}{2}-\tfrac{t_0}{4})^{-\frac{3}{8}}\int_{t_{\text{mid}}}^t \left( 1 + (t-s)^{-\frac{3}{4}}\right) \d s\\
		&= \tfrac{13}{5} (\tfrac{t}{2}-\tfrac{t_0}{4})^\frac{5}{8} +  \tfrac{28}{5} (\tfrac{t}{2}-\tfrac{t_0}{4})^{-\frac{1}{8}} 
	\end{align*}
	with $t_{\mathrm{mid}} \defs \frac{1}{2}\left(t + \frac{t_0}{2}\right)$ and thus 
	\[
		(t - \tfrac{t_0}{2})^\frac{1}{2}\int_{t_0/2}^t \left( 1 + (t-s)^{-\frac{3}{4}}\right) (s-\tfrac{t_0}{2})^{-\frac{3}{8}} \d s \leq \sqrt{2} \left( \tfrac{13}{5} (\tfrac{t_1}{2}-\tfrac{t_0}{4})^\frac{9}{8} +  \tfrac{28}{5} (\tfrac{t_1}{2}-\tfrac{t_0}{4})^{\frac{3}{8}}  \right) \sfed K_3(t_0, t_1) \equiv K_3
	\]
	 for all $t \in (\tfrac{t_0}{2}, t_1)$, our previous considerations directly yield  
	\[
		M_\eps \leq K_4 M_\eps^{\frac{3}{4}} + K_4 \leq \frac{3}{4}M_\eps+ \frac{K_4^4}{4} + K_4
	\] 
	with $K_4 \equiv K_4(t_0, t_1) \defs \max(K_1K_2, K_1^\frac{5}{4}K_2K_3)$ and thus 
	\[
		M_\eps \leq K_4^4 + 4K_4,
	\]
	which readily implies our desired result.
\end{proof}
\noindent
We now use the above bounds to apply standard regularity theory by Porzio and Vespri (cf.\ \cite{PorzioVespriHoelder}), which conveniently is already formulated in such a way as to work with minimal initial data regularity, to gain uniform space-time Hölder bounds for all three solution components.
\begin{lemma}\label{lemma:hoelder_1}
	Assume we are in Scenario (\ref{scenario_1}), Scenario (\ref{scenario_2}) with $C_{\mathrm{S2}}$ as in \Cref{lemma:elliptic_bound} or in Scenario (\ref{scenario_3}). 
	Then for each $t_0, t_1 \in \R$ with $t_1 > t_0 > 0$, there exist $C \equiv C(t_0, t_1) > 0$ and $\theta \equiv \theta(t_0, t_1) \in (0,1)$ such that 
	\[
		\|\ue\|_{C^{\theta, \frac{\theta}{2}}(\overline{\Omega}\times[t_0, t_1])} \leq C
	\]
	and
	\[
		\|\ve\|_{C^{\theta, \frac{\theta}{2}}(\overline{\Omega}\times[t_0, t_1])} \leq C \stext{ as well as } \|\we\|_{C^{\theta, \frac{\theta}{2}}(\overline{\Omega}\times[t_0, t_1])} \leq C
	\]
	for all $\eps \in (0,1)$.
\end{lemma}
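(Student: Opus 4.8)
The plan is to obtain the Hölder estimates by invoking the interior parabolic regularity theory of Porzio and Vespri (cf.\ \cite{PorzioVespriHoelder}), which is tailored to equations with merely bounded, measurable coefficients and inhomogeneities and therefore does not require control of the initial data. First I would fix $t_1 > t_0 > 0$ and, as in the previous lemmas, apply \Cref{lemma:u_linfty_bound} together with \Cref{lemma:gradz_bound} (respectively their elliptic analogues) to obtain constants $K_1 \equiv K_1(t_0,t_1) > 0$ such that $\|\ue(\cdot,t)\|_\L{\infty} \leq K_1$, $\|\ve(\cdot,t)\|_\L{\infty} \leq K_1$, $\|\we(\cdot,t)\|_\L{\infty} \leq K_1$ and, most importantly, $\|\grad \ze(\cdot,t)\|_\L{p} \leq K_1$ for some large finite $p$ (say $p > n+2$), uniformly for $t \in [\tfrac{t_0}{2},t_1)$ and $\eps \in (0,1)$. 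Here I would use $\tfrac{t_0}{2}$ rather than $t_0$ as the left endpoint so that the Hölder bound on $[t_0,t_1]$ is genuinely interior in time.

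Next I would treat the equation for $\ue$ written in divergence form as it appears in (\ref{problem_simplified}), namely $\uet = \div(\grad \ue + \ue \grad \ze)$, and regard it as a linear equation $\uet = \div(\grad \ue + \vec b)$ with the inhomogeneous flux $\vec b \defs \ue \grad \ze$. By Hölder's inequality and the bounds just recorded, $\vec b$ is bounded in $L^p(\Omega\times(\tfrac{t_0}{2},t_1))$ with $p$ as large as we like, in particular with $p$ exceeding the exponent required by the Porzio--Vespri framework; the diffusion part has constant (hence uniformly elliptic and bounded) coefficients, and the Neumann boundary condition is exactly of the type covered there. Applying their interior-in-time Hölder estimate up to the (smooth) boundary then yields $\theta_1 \in (0,1)$ and $C_1 > 0$, depending only on $t_0$, $t_1$, $\Omega$ and the listed bounds, with $\|\ue\|_{C^{\theta_1,\theta_1/2}(\overline{\Omega}\times[t_0,t_1])} \leq C_1$, uniformly in $\eps$. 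For $\ve$ and $\we$ I would then feed this information into the second and third equations of (\ref{problem}): in the case $\tau = 1$ these are linear inhomogeneous heat equations with the now-Hölder (a fortiori bounded) right-hand sides $\alpha \ue - \beta \ve$ and $\gamma \ue - \delta \we$, so the same Porzio--Vespri estimate (or classical interior parabolic Schauder/$L^p$ theory) gives $C^{\theta_2,\theta_2/2}$ bounds on $[\tfrac{3t_0}{4},t_1]$, hence on $[t_0,t_1]$; in the case $\tau = 0$ one instead invokes elliptic regularity theory (cf.\ \cite{FriedmanPartialDifferentialEquations1969}) together with the Sobolev embedding theorem, using that $\|\ue(\cdot,t)\|_\L{\infty}$ is bounded uniformly in $t$ on the relevant interval, to obtain spatial Hölder bounds, and the time-Hölder continuity follows from the time continuity of $\ue$ in the same norms. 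Taking $\theta \defs \min(\theta_1,\theta_2)$ and $C$ the maximum of the constants finishes the proof.

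The main obstacle, and the step requiring the most care, is verifying that the flux $\ue \grad \ze$ meets precisely the integrability hypothesis demanded by the Porzio--Vespri result and that the constants produced depend only on $t_0$, $t_1$, $\Omega$, and the a priori bounds — not on $\eps$ nor on any unavailable quantity like $\|u_{0,\eps}\|_\L{\infty}$. This is why \Cref{lemma:gradz_bound} is stated for every finite $p$: we need enough integrability of $\grad \ze$ to land comfortably inside the admissible class, and the $\eps$-uniformity of all the bounds entering the argument is what guarantees the $\eps$-uniformity of the output. A secondary point is the bookkeeping of the time intervals: since every estimate invoked holds only on an interval bounded away from zero, I would consistently shrink the left endpoint by a fixed fraction at each stage ($\tfrac{t_0}{2} \to \tfrac{3t_0}{4} \to t_0$) so that each application of a regularity theorem is genuinely interior, and then restrict to $[t_0,t_1]$ at the end.
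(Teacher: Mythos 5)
Your proposal is correct and follows essentially the same route as the paper: the first equation is viewed in divergence form with flux $\grad \ue + \ue\grad\ze$, the uniform bounds from \Cref{lemma:gradz_bound} and \Cref{lemma:u_linfty_bound} make the Porzio--Vespri Hölder theory applicable uniformly in $\eps$, and $\ve$, $\we$ are then handled by the same parabolic result (for $\tau=1$) or by elliptic regularity combined with the already obtained space-time Hölder bound on $\ue$ (for $\tau=0$). The only cosmetic difference is your explicit shrinking of the time interval, which the paper handles implicitly in the same spirit.
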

\begin{proof}
	Reframing the first equation in (\ref{problem_simplified}) as 
	\[
		\uet - \div (a_\eps(x,t,\ue,\grad \ue)) = b(x,t,\ue, \grad\ue)
	\]
	with $a_\eps(x,t,\phi,\Phi) \defs \Phi + \ue(x, t)\grad \ze(x,t)$ and $b(x,t,\phi,\Phi) \defs 0$ for all $(x,t,\phi,\Phi) \in \Omega\times(0,\infty)\times\R\times\R^n$ makes it available to the results about Hölder regularity proven in \cite{PorzioVespriHoelder}. In fact, the bounds established in \Cref{lemma:gradz_bound} and \Cref{lemma:u_linfty_bound} are already sufficient to gain the first half of our desired result by \cite[Theorem 1.3]{PorzioVespriHoelder}.
	\\[0.5em] 
	Similarly if $\tau = 1$, the second equation in (\ref{problem}) can be rewritten as 
	\[
		\vet - \div (a(x,t,\ve,\grad\ve)) = b_\eps(x,t,\ve,\grad \ve)
	\]
	with $a(x,t,\phi, \Phi) \defs \Phi$ and $b_\eps(x,t,\phi,\Phi) \defs \alpha \ue(x,t) - \beta\ve(x,t)$ for all $(x,t,\phi,\Phi) \in \Omega\times(0,\infty)\times\R\times\R^n$, making it available to essentially the same argument as above due to the bounds in \Cref{lemma:gradz_bound} and \Cref{lemma:u_linfty_bound}. Given the similar structure of the third equation in (\ref{problem}) this also holds true for $\we$ and thus gives us the second half of our result.
	\\[0.5em]
	If $\tau = 0$, then our desired bounds for $\ve$ and $\we$ follow from standard elliptic regularity theory and the space-time Hölder bound for $\ue$ already established above (cf.\ \cite[Lemma 5.5]{HeihoffExistenceGlobalSmooth2021} for a similar argument). 
\end{proof}
\noindent
Using the parabolic regularity theory from \cite{LadyzenskajaLinearQuasilinearEquations1988} and \cite{LiebermanHolderContinuityGradient1987} now allows us to gain the final bounds necessary for our solution construction, namely higher order space-time Hölder bounds. As said regularity theory requires higher initial data regularity than we have uniformly available, we combine it with a straightforward cutoff function argument to decouple it from the initial data.
\begin{lemma}\label{lemma:hoelder_2}
	Assume we are in Scenario (\ref{scenario_1}), Scenario (\ref{scenario_2}) with $C_{\mathrm{S2}}$ as in \Cref{lemma:elliptic_bound} or in Scenario (\ref{scenario_3}).
	Then for each $t_0, t_1 \in \R$ with $t_1 > t_0 > 0$, there exist $C \equiv C(t_0, t_1) > 0$ and $\theta \equiv \theta(t_0, t_1) \in (0,1)$ such that 
	\[
		\|\ue\|_{C^{2+\theta, 1+\frac{\theta}{2}}(\overline{\Omega}\times[t_0, t_1])} \leq C	
	\]
	and 
	\[
		\|\ve\|_{C^{2+\theta, \tau+\frac{\theta}{2}}(\overline{\Omega}\times[t_0, t_1])} \leq C	\stext{ as well as } \|\we\|_{C^{2+\theta, \tau+\frac{\theta}{2}}(\overline{\Omega}\times[t_0, t_1])} \leq C
	\]
	for all $\eps \in (0,1)$.
\end{lemma}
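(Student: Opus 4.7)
The plan is to apply classical parabolic and elliptic Schauder-type regularity theory on the slightly enlarged cylinder $\overline{\Omega}\times[\tfrac{t_0}{2}, t_1]$, on which Lemma~\ref{lemma:hoelder_1} already provides uniform $C^{\theta, \theta/2}$-control of all three solution components. Since that theory requires initial data of matching regularity, which our approximations do not possess uniformly in $\eps$, I first introduce a fixed temporal cutoff $\eta \in C^\infty([0,t_1];[0,1])$ with $\eta \equiv 0$ on $[0, \tfrac{t_0}{2}]$ and $\eta \equiv 1$ on $[t_0, t_1]$, multiply the equations by $\eta$, and work with the resulting modified functions, which vanish at $t=0$ and hence trivially satisfy any compatibility condition. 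The desired estimates on $\overline{\Omega}\times[t_0,t_1]$ then follow by restriction.

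\textbf{Step 1: regularity of $\ve$ and $\we$.} In the parabolic-parabolic case $\tau = 1$, the product $\eta\ve$ satisfies
\[
(\eta\ve)_t - \laplace(\eta\ve) + \beta\,\eta\ve = \alpha\,\eta\,\ue + \eta'\,\ve \stext{on} \Omega\times(0,t_1),
\]
with homogeneous Neumann boundary conditions and zero initial data at $t=0$. Lemma~\ref{lemma:hoelder_1} ensures that the right-hand side lies in $C^{\theta, \theta/2}(\overline{\Omega}\times[0,t_1])$ with norm bounded uniformly in $\eps$, so the Schauder estimates of \cite{LadyzenskajaLinearQuasilinearEquations1988} yield a uniform $C^{2+\theta, 1+\theta/2}$-bound for $\eta\ve$, and the same reasoning handles $\we$. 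In the parabolic-elliptic case $\tau = 0$, the claim follows from elliptic Schauder theory applied at each fixed $t$, with temporal Hölder regularity transferred from $\ue$ to $\ve, \we$ through the linearity of $-\laplace v + \beta v = \alpha u$ (and the analogous equation for $w$), yielding the sharper exponent $\tau + \tfrac{\theta}{2} = \tfrac{\theta}{2}$ in the time variable.

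\textbf{Step 2: regularity of $\ue$.} Since $\ze = \xi\we - \chi\ve$ inherits the $C^{2+\theta, \tau + \theta/2}$ regularity from Step 1, its gradient and Laplacian are uniformly Hölder continuous on $\overline{\Omega}\times[\tfrac{t_0}{2}, t_1]$. Expanding the divergence in the first equation of (\ref{problem_simplified}) rewrites it in non-divergence form as
\[
(\ue)_t - \laplace \ue - \grad \ze \cdot \grad \ue - (\laplace \ze)\,\ue = 0,
\]
a linear, uniformly parabolic equation whose coefficients are Hölder continuous with $\eps$-independent norms. Multiplying $\ue$ by a second temporal cutoff that vanishes on $[0, \tfrac{3t_0}{4}]$ and equals one on $[t_0, t_1]$ and invoking Schauder theory from \cite{LadyzenskajaLinearQuasilinearEquations1988} once more — if convenient, using \cite{LiebermanHolderContinuityGradient1987} as an intermediate step to first secure $C^{1+\theta,(1+\theta)/2}$-regularity of $\ue$ before the final iteration — produces the desired uniform $C^{2+\theta, 1+\theta/2}$-bound for $\ue$ on $\overline{\Omega}\times[t_0, t_1]$.

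\textbf{Main difficulty.} The principal obstacle is the lack of any uniform regularity of the initial data as $\eps\searrow 0$; the multiplicative cutoff in time is the device that decouples the interior Schauder estimates from the initial layer. Beyond this, care is required that the ordering be respected — $\ve$ and $\we$ must be handled first, so that the coefficients appearing in the rewritten $\ue$-equation are already known to be Hölder continuous uniformly in $\eps$ — and that the additional $\eta'$-terms introduced by the cutoff do not compromise the uniform Hölder norms of the source, which follows immediately from the bounds of Lemma~\ref{lemma:hoelder_1}.
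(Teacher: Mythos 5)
Your proposal is correct and shares the paper's overall architecture — a smooth temporal cutoff to decouple the Schauder estimates from the irregular initial layer, treatment of $\ve$ and $\we$ first (parabolic Schauder theory from \cite{LadyzenskajaLinearQuasilinearEquations1988} if $\tau=1$, elliptic theory if $\tau=0$), and only then $\ue$ — but your handling of $\ue$ is genuinely different. The paper views the first equation of (\ref{problem_simplified}) as a heat equation with source $\rho\grad\ue\cdot\grad\ze+\rho\ue\laplace\ze+\rho'\ue$, which forces it to first establish a uniform H\"older bound for $\grad\ue$ via \cite[Theorem 1.1]{LiebermanHolderContinuityGradient1987} before that source is H\"older continuous and the Schauder estimate can be applied. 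You instead keep the first-order term inside the operator, writing the equation in non-divergence form with coefficients $\grad\ze$ and $\laplace\ze$, which by your Step 1 are uniformly bounded in $C^{\theta,\theta/2}$; the linear Schauder theory for the Neumann problem with variable H\"older lower-order coefficients (the very result on p.~320 of \cite{LadyzenskajaLinearQuasilinearEquations1988} that the paper cites) then yields the $C^{2+\theta,1+\theta/2}$ bound for the cut-off $\ue$ directly, with constants depending only on the coefficient norms and hence uniform in $\eps$, so the Lieberman step becomes optional as you note. Two routine points should be made explicit: the coefficients and the source $\eta'\ue$ must be uniformly H\"older on the slightly larger interval where your second cutoff lives, so Step 1 and \Cref{lemma:hoelder_1} have to be invoked with $t_0$ replaced by, say, $t_0/2$ (the ``standard time shift'' the paper also uses); and passing to non-divergence form with the homogeneous Neumann condition for $\ue$ is legitimate precisely because $\grad\ze\cdot\nu=0$ on $\partial\Omega$, so the no-flux condition reduces to $\grad\ue\cdot\nu=0$.
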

\begin{proof}
	Fix $t_1 > t_0 > 0$. We then further fix a cutoff function $\rho \in C^\infty([\tfrac{t_0}{4}, t_1])$ with
	\[
		\rho \equiv 0 \text{ on } [\tfrac{t_0}{4}, \tfrac{t_0}{2}], \;\;\;\;
		\rho(t) \in [0,1] \text{ for all } t\in [t_0/2, t_0] \stext{ and } \rho \equiv 1 \text{ on } [t_0, t_1].
	\]
	We now begin by treating $\ve$ and $\we$. If $\tau = 1$, we first let $\tilde{\ve}(x,t) \defs \rho(t) \ve(x,t)$ for all $x\in\overline{\Omega}$, $t\in[t_0/4, t_1]$ and $\eps \in (0,1)$. These functions then are classical solutions of 
	\[
		\left\{
		\begin{aligned}
			\tilde{\ve}_t &= \laplace{\tilde{\ve}} + f_\eps(x,t) &&\text{ on } \Omega \times (\tfrac{t_0}{4}, t_1), \\
			\grad \tilde{\ve} \cdot \nu &= 0 && \text{ on } \partial\Omega\times(\tfrac{t_0}{4}, t_1),\\
			\tilde{\ve}(\cdot, \tfrac{t_0}{4}) &= 0 &&\text{ on } \Omega 
		\end{aligned}
		\right.	
	\]
	with $f_\eps(x,t) \defs [\rho'(t)-\beta\rho(t)]\ve(x,t) + \alpha\rho(t)\ue(x,t)$ for all $x\in\overline{\Omega}$, $t\in[t_0/4, t_1]$ and $\eps \in (0,1)$. As the bounds established in \Cref{lemma:hoelder_1} ensure that there exist $K_1 \equiv K_1(t_0, t_1) > 0$ and $\theta_1 \equiv \theta_1(t_0, t_1) \in (0,1)$ such that 
	\[
		\|f_\eps\|_{C^{\theta_1, \frac{\theta_1}{2}}(\overline{\Omega}\times[t_0/4, t_1])} \leq K_1
	\]
	for all $\eps \in (0,1)$, we can apply standard parabolic regularity theory from \cite[p.170 and p.320]{LadyzenskajaLinearQuasilinearEquations1988} to gain constants $K_2 \equiv K_2(t_0, t_1) > 0$ and $\theta_2 \equiv \theta_2(t_0, t_1) \in (0,1)$ such that 
	\[
		\|\tilde{\ve}\|_{C^{2+\theta_2, \tau+\frac{\theta_2}{2}}(\overline{\Omega}\times[t_0/4, t_1])} \leq K_2
	\] for all $\eps \in (0,1)$. As $\rho \equiv 1$ on $[t_0, t_1]$, this directly implies 
	\[
		\|\ve\|_{C^{2+\theta_2, \tau+\frac{\theta_2}{2}}(\overline{\Omega}\times[t_0, t_1])} \leq K_2
	\]
	for all $\eps \in (0,1)$. By essentially the same argument, we further gain $K_3 \equiv K_3(t_0, t_1) > 0$ and $\theta_3 \equiv \theta_3(t_0, t_1) \in (0,1)$ such that 
	\[
		\|\we\|_{C^{2+\theta_3, \tau+\frac{\theta_3}{2}}(\overline{\Omega}\times[t_0, t_1])} \leq K_3
	\]
	for all $\eps \in (0,1)$.
	\\[0.5em]
	If $\tau = 0$, then the corresponding bounds follow directly from standard elliptic regularity theory (cf.\ \cite{LUElliptic}) in a similar fashion to the arguments seen in \cite[Lemma 5.5]{HeihoffExistenceGlobalSmooth2021}.
	\\[0.5em]
	These arguments of course immediately also ensure the existence of constants $K_4 \equiv K_4(t_0, t_1) > 0$ and $\theta_4 \equiv \theta_4(t_0, t_1) \in (0,1)$ such that 
	\begin{equation}\label{eq:z_hoelder2_bound}
		\|\ze\|_{C^{2+\theta_4, \tau+\frac{\theta_4}{2}}(\overline{\Omega}\times[t_0, t_1])} \leq K_4		
	\end{equation}
	for all $\eps \in (0,1)$ as $\ze$ is a linear combination of $\ve$ and $\we$.
	\\[0.5em]
	Given that we have now established the second half of our result, we can focus our attention on $\ue$. We again begin by setting $\tilde{\ue}(x,t) \defs \rho(t) \ue(x,t)$ for all $x\in\overline{\Omega}$, $t\in[t_0/4, t_1]$ and $\eps \in (0,1)$. 
	Then these functions are a classical solution of the system
	\[
		\left\{
		\begin{aligned}
			\tilde{\ue}_t &= \div (A_\eps(x,t,\tilde{\ue},\grad \tilde{\ue})) + B_\eps(x,t,\tilde{\ue},\grad \tilde{\ue}) &&\text{ on } \Omega \times (\tfrac{t_0}{4}, t_1), \\
			A_\eps(x,t,\tilde{\ue},\grad \tilde{\ue}) \cdot \nu &= 0 && \text{ on } \partial\Omega\times(\tfrac{t_0}{4}, t_1),\\
			\tilde{\ue}(\cdot, \tfrac{t_0}{4}) &= 0 &&\text{ on } \Omega 
		\end{aligned}
		\right.	
	\]
	with $A_\eps(x,t,\phi,\Phi) \defs \Phi + \rho(t)\ue(x, t)\grad \ze(x,t)$ and $B_\eps(x,t,\phi,\Phi) \defs \rho'(t) \ue(x,t)$ for all $(x,t,\phi,\Phi) \in \Omega\times(0,\infty)\times\R\times\R^n$ and $\eps \in (0,1)$, which is compatible with the notation from \cite{LiebermanHolderContinuityGradient1987}. As \Cref{lemma:hoelder_1} and (\ref{eq:z_hoelder2_bound}) ensure that there exist $K_5 \equiv K_5(t_0, t_1) > 0$ and $\theta_5 \equiv \theta_5(t_0, t_1) \in (0,1)$ such that
	\[
		\|\ue\|_{L^\infty(\Omega\times(t_0/4,t_1))} \leq K_5, \;\;\;\; \|\rho\ue\grad \ze\|_{C^{\theta_5, \frac{\theta_5}{2}}(\overline{\Omega}\times[t_0/4, t_1])} \leq K_5 \stext{ and } \| B_\eps\|_{L^\infty(\Omega\times(t_0/4,t_1)\times\R\times\R^n)} \leq K_5
	\]
	after a standard time shift for all $\eps \in (0,1)$, we can apply \cite[Theorem 1.1]{LiebermanHolderContinuityGradient1987} to gain $K_6 \equiv K_6(t_0, t_1) > 0$ and $\theta_6 \equiv \theta_6(t_0, t_1) \in (0,1)$ such that 
	\begin{equation}\label{eq:grad_u_hoelder_bound}
		\|\grad \tilde{\ue}\|_{C^{\theta_6}(\overline{\Omega}\times[t_0/4, t_1])} \leq K_6	\stext{ and thus } \|\grad \ue\|_{C^{\theta_6}(\overline{\Omega}\times[t_0, t_1])} \leq K_6
	\end{equation}
	for all $\eps \in (0,1)$.
	\\[0.5em]
	Slightly reframing the above system to make it available to the results presented in \cite{LadyzenskajaLinearQuasilinearEquations1988}, $\tilde{\ue}$ is also a classical solution to
	\[
		\left\{
		\begin{aligned}
			\tilde{\ue}_t &= \laplace{\tilde{\ue}} + f_\eps(x,t) &&\text{ on } \Omega \times (\tfrac{t_0}{4}, t_1) \\
			\grad \tilde{\ue} \cdot \nu &= 0 && \text{ on } \partial\Omega\times(\tfrac{t_0}{4}, t_1)\\
			\tilde{\ue}(\cdot, \tfrac{t_0}{4}) &= 0 &&\text{ on } \Omega 
		\end{aligned}
		\right.	
	\]
	with $f_\eps(x,t) \defs \rho(t)\grad \ue(x,t) \cdot \grad \ze(x,t) +\rho(t)\ue(x,t)\laplace\ze(x,t) + \rho'(t)\ue(x,t)$ for all $x\in\overline{\Omega}$, $t\in[t_0/4, t_1]$ and $\eps\in(0,1)$. Due to the bounds established in \Cref{lemma:hoelder_1}, (\ref{eq:z_hoelder2_bound}) as well as (\ref{eq:grad_u_hoelder_bound}), there exist $K_7 \equiv K_7(t_0, t_1) > 0$ and $\theta_7 \equiv \theta_7(t_0, t_1) \in (0,1)$ such that 
	\[
		\|f_\eps\|_{C^{\theta_7, \frac{\theta_7}{2}}(\overline{\Omega}\times[t_0/4, t_1])} \leq K_7 
	\]
	after a standard time shift for all $\eps \in (0,1)$. This then allows us to apply the same regularity theory from \cite[p.170 and p.320]{LadyzenskajaLinearQuasilinearEquations1988} as before to gain $K_8 \equiv K_8(t_0, t_1) > 0$ and $\theta_8 \equiv \theta_8(t_0, t_1) \in (0,1)$ such that 
	\[
		\|\tilde{\ue}\|_{C^{2+\theta_8, 1+\frac{\theta_8}{2}}(\overline{\Omega}\times[t_0/4, t_1])} \leq K_8 \stext{ and thus } \|\ue\|_{C^{2+\theta_8, 1+\frac{\theta_8}{2}}(\overline{\Omega}\times[t_0, t_1])} \leq K_8
	\]
	for all $\eps \in (0,1)$, which completes the proof with $\theta \equiv \theta(t_0, t_1) \defs \min(\theta_2, \theta_3, \theta_8)$ and an appropriate constant $C$. 
\end{proof}\noindent
Using these bounds, we now construct our desired solution candidates as follows:
\begin{lemma}\label{lemma:construction}
Assume we are in Scenario (\ref{scenario_1}), Scenario (\ref{scenario_2}) with $C_{\mathrm{S2}}$ as in \Cref{lemma:elliptic_bound} or in Scenario (\ref{scenario_3}). Then there exist nonnegative functions $u\in C^{2,1}(\overline{\Omega}\times(0,\infty))$ and $v,w \in C^{2,\tau}(\overline{\Omega}\times(0,\infty))$ as well as a null sequence $(\eps_j)_{j\in\N}$ such that 
\begin{align}
	\ue &\rightarrow u &&\text{ in } C^{2,1}(\overline{\Omega}\times[t_0, t_1]) \label{eq:u_convergence}\\
	\ve &\rightarrow v &&\text{ in } C^{2,\tau}(\overline{\Omega}\times[t_0, t_1]) \label{eq:v_convergence}\\\
	\we &\rightarrow w &&\text{ in } C^{2,\tau}(\overline{\Omega}\times[t_0, t_1])\label{eq:w_convergence}\
\end{align}
for all $t_1 > t_0 > 0$ as $\eps = \eps_j \searrow 0$ and such that $(u, v, w)$ is a classical solution to (\ref{problem}).
\end{lemma}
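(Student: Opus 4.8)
The plan is to obtain $(u,v,w)$ as a locally uniform limit of a subsequence of the approximate solutions $(\ue,\ve,\we)_{\eps\in(0,1)}$, exploiting the uniform higher-order Hölder bounds from \Cref{lemma:hoelder_2} together with the compactness of Hölder embeddings on bounded cylinders. Concretely, I would fix an exhausting family of compact time intervals, e.g.\ $I_k \defs [\tfrac{1}{k}, k]$ for integers $k \geq 2$. On each cylinder $\overline{\Omega}\times I_k$, \Cref{lemma:hoelder_2} supplies a constant $C_k > 0$ and an exponent $\theta_k \in (0,1)$ with $\|\ue\|_{C^{2+\theta_k, 1+\frac{\theta_k}{2}}(\overline{\Omega}\times I_k)} \leq C_k$ and $\|\ve\|_{C^{2+\theta_k, \tau+\frac{\theta_k}{2}}(\overline{\Omega}\times I_k)} + \|\we\|_{C^{2+\theta_k, \tau+\frac{\theta_k}{2}}(\overline{\Omega}\times I_k)} \leq C_k$ uniformly in $\eps\in(0,1)$. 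Since the embeddings $C^{2+\theta_k, 1+\frac{\theta_k}{2}}(\overline{\Omega}\times I_k) \hookrightarrow C^{2,1}(\overline{\Omega}\times I_k)$ and $C^{2+\theta_k, \tau+\frac{\theta_k}{2}}(\overline{\Omega}\times I_k) \hookrightarrow C^{2,\tau}(\overline{\Omega}\times I_k)$ are compact by the Arzelà--Ascoli theorem, for each fixed $k$ one can extract a subsequence along which $\ue$, $\ve$, $\we$ converge in $C^{2,1}(\overline{\Omega}\times I_k)$ respectively $C^{2,\tau}(\overline{\Omega}\times I_k)$.

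A standard diagonal procedure over $k$, nesting the subsequences (which is possible because $I_k \subseteq I_{k+1}$, so convergence on the larger cylinder passes to the smaller), then produces a single null sequence $(\eps_j)_{j\in\N}$ and limit functions $u$, $v$, $w$ on $\overline{\Omega}\times(0,\infty)$, well defined since $\bigcup_k I_k = (0,\infty)$ and the limits on overlapping cylinders agree by uniqueness of limits, such that (\ref{eq:u_convergence})--(\ref{eq:w_convergence}) hold for every $t_1 > t_0 > 0$ (each such interval being contained in some $I_k$). By construction $u \in C^{2,1}_{\loc}(\overline{\Omega}\times(0,\infty)) = C^{2,1}(\overline{\Omega}\times(0,\infty))$ and $v,w \in C^{2,\tau}(\overline{\Omega}\times(0,\infty))$. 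Since $\ue > 0$ and $\ve,\we \geq 0$ on $\overline{\Omega}\times(0,\infty)$ by \Cref{lemma:approx_exist}, passing to the locally uniform limit yields $u \geq 0$ and $v,w \geq 0$ (strict positivity of $u$ would then follow a posteriori from the strong maximum principle, but is not needed here).

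It remains to check that $(u,v,w)$ solves (\ref{problem}) classically, and here the mode of convergence is exactly tailored to the task: the locally uniform $C^{2,1}$ convergence of $\ue$ and $C^{2,\tau}$ convergence of $\ve,\we$ forces all derivatives occurring in (\ref{problem}) --- namely $\uet$, $\grad\ue$, $\grad^2\ue$, $\laplace\ue$, as well as $\grad\ve$, $\grad^2\ve$, $\grad\we$, $\grad^2\we$, and, if $\tau=1$, also $\vet$ and $\wet$ --- to converge uniformly on every cylinder $\overline{\Omega}\times[t_0,t_1]$ to the corresponding derivatives of $u$, $v$, $w$. Writing the cross-diffusion term as $\div(\ue\grad\ve) = \grad\ue\cdot\grad\ve + \ue\laplace\ve$ (and likewise with $\we$) shows that it converges locally uniformly to $\div(u\grad v)$; hence letting $\eps = \eps_j \searrow 0$ in each of the three equations of (\ref{problem}) preserves the identities on $\Omega\times(0,\infty)$, and the Neumann conditions $\grad\ue\cdot\nu = \grad\ve\cdot\nu = \grad\we\cdot\nu = 0$ on $\partial\Omega$ pass to the limit by uniform convergence of the gradients up to the boundary. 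Thus $(u,v,w)$ is a classical solution of (\ref{problem}) with the stated regularity.

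Since the genuinely difficult part --- the $\eps$-independent a priori estimates --- has already been carried out in \Cref{section:apriori} and in the preceding lemmas of the present section, this argument is essentially a bookkeeping exercise. The only points requiring some care are organizing the diagonal extraction so that a single sequence $(\eps_j)$ works simultaneously on every compact subinterval of $(0,\infty)$, and the routine verification that the regularity furnished by \Cref{lemma:hoelder_2} is strong enough to pass to the limit in the quadratically nonlinear taxis terms; neither poses a real obstacle.
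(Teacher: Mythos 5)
Your proposal is correct and follows essentially the same route as the paper: uniform Hölder bounds from \Cref{lemma:hoelder_2}, compact embedding of Hölder spaces, a diagonal extraction over an exhausting family of compact time cylinders, and transfer of the classical solution property via the resulting strong convergence. You merely spell out in detail what the paper compresses into a short paragraph, so there is nothing to add or correct.
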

\begin{proof}
	Given the bounds established in \Cref{lemma:hoelder_2}, the convergence properties (\ref{eq:u_convergence}) to (\ref{eq:w_convergence}) as well as the existence of nonnegative functions $u$, $v$, $w$ of appropriate regularity are an immediate consequence of the compact embedding properties of Hölder spaces combined with a straightforward diagonal sequence argument. As our approximate solutions $\ue$, $\ve$, $\we$ are already classical solutions to (\ref{problem}) and given the now established fairly strong convergence properties in (\ref{eq:u_convergence}) to (\ref{eq:w_convergence}), our desired solution properties immediately transfer from the approximate solutions to their limits $u$, $v$ and $w$. This completes the proof.
\end{proof}

\section{Continuity at $t = 0$}

Having now constructed solution candidates, which already solve (\ref{problem}) classically, the only thing that remains to be shown is that said candidates are connected to the initial data in the way outlined in (\ref{eq:u_convergence}) to  (\ref{eq:w_convergence}). We first focus on (\ref{eq:u_convergence}) as it is not only relevant in all of our scenarios but also the most challenging of the continuity properties. 
In fact, the key to deriving this property is to first show that our approximate solutions are continuous at $t=0$ in an appropriate and $\eps$ independent fashion. To do this, we first show that a space-time integral of a quantity connected to the taxis becomes uniformly small as $t$ goes to zero.

\begin{lemma}\label{lemma:uniform_u_continuity_help}
Assume we are in Scenario (\ref{scenario_1}), Scenario (\ref{scenario_2}) with $C_{\mathrm{S2}}$ as in \Cref{lemma:elliptic_bound} or in Scenario (\ref{scenario_3}). Then there exist $C > 0$ and $\theta > 0$ such that 
\[
	\int_0^t \| \ue(\cdot, s) \grad \ze(\cdot, s)\|_\L{1} \d s \leq C t^\theta
\]
for all $t\in(0,1)$ and $\eps \in (0,1)$.
\end{lemma}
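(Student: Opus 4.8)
The plan is to split the integral $\int_0^t \|\ue(\cdot,s)\grad\ze(\cdot,s)\|_{\L{1}}\d s$ via H\"older's inequality into a product of a bound on $\ue$ in some $L^p(\Omega)$ and a bound on $\grad\ze$ in the conjugate $L^{p'}(\Omega)$, and then argue that the time-integrability of the resulting powers of $s$ (with the appropriate decay exponents from Section \ref{section:apriori}) produces the factor $t^\theta$ with $\theta > 0$. The crucial point is that the decay exponents must be strictly better than what would merely give a bounded integral, which is exactly where the restriction $r > \frac{6}{5}$ (in the parabolic-parabolic case) and $\uparam > 1$ (in the parabolic-elliptic three-dimensional case) enter.

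First I would treat the parabolic-parabolic case (Scenario (\ref{scenario_1})). Here I would use H\"older with exponents $2$ and $2$, writing $\|\ue\grad\ze\|_{\L{1}} \leq \|\ue\|_{\L{2}}\|\grad\ze\|_{\L{2}}$. However $\|\ue\|_{\L{2}}$ is not controlled up to $t=0$, so instead I would use the entropy-type information from \Cref{lemma:energy}: the bound $\zeta\int_0^t s^\lambda \int_\Omega |\grad\ue|^2/\ue \leq C$ together with $\int_0^t s^\lambda \int_\Omega |\laplace\ze|^2 \leq C$ and the pointwise bounds $\int_\Omega \ue\ln\ue \leq Ct^{-\lambda}$, $\int_\Omega|\grad\ze|^2 \leq Ct^{-\lambda}$. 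A clean route is H\"older in space with a small exponent on $\ue$: write $\|\ue\grad\ze\|_{\L{1}} \leq \|\ue\|_{\L{p}}\|\grad\ze\|_{\L{p'}}$ with $p$ close to $1$ so that $\|\ue\|_{\L{p}}$ can be bounded via Gagliardo--Nirenberg using $\int_\Omega|\grad\ue^{1/2}|^2 = \frac14\int_\Omega|\grad\ue|^2/\ue$ and the conserved mass, while $\|\grad\ze\|_{\L{p'}}$ is controlled by interpolating between the baseline $W^{1,r}$-bound of \Cref{lemma:baseline_parabolic} and $\|\laplace\ze\|_{\L{2}}$. Then H\"older in time against the weights $s^\lambda$ converts the space-time integrals of \Cref{lemma:energy} into the bound, and the surplus in the exponents (coming from $\lambda < \frac23$ and $\lambda - \frac2r > -1$) yields a positive power $t^\theta$. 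Alternatively, and perhaps more transparently, one can bound $\|\ue(\cdot,s)\grad\ze(\cdot,s)\|_{\L{1}}$ by something like $C s^{-\mu}$ with $\mu < 1$ using the pointwise estimates plus \Cref{corollary:energy}, and integrate directly; I would choose whichever keeps the parameter bookkeeping cleanest.

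Next I would handle the parabolic-elliptic cases. In Scenario (\ref{scenario_2}) ($n=2$) I would take $p$ slightly above $2$ so that $\|\grad\ze\|_{\L{p'}}$ with $p' < 2$ is bounded up to $t=0$ by \Cref{lemma:baseline_elliptic} (which gives $W^{1,q}$ bounds for all $q < \frac{n}{n-1} = 2$), while $\|\ue\|_{\L{p}}$ decays like $t^{-\frac{n(p-1)}{2}}$ by \Cref{lemma:elliptic_bound} (applicable since $\frac52 > 2$, or using $p = \frac52$ directly); choosing $p$ close enough to $2$ makes $\frac{n(p-1)}{2} < 1$, so the time integral converges with a power to spare. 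In Scenario (\ref{scenario_3}) ($n=3$), the available exponent from \Cref{lemma:baseline_elliptic} for $\grad\ze$ is below $\frac{n}{n-1} = \frac32$, so I would pair $\|\grad\ze\|_{\L{p'}}$ with $p'$ slightly below $\frac32$ against $\|\ue\|_{\L{p}}$ for $p$ slightly above $3$; here \Cref{lemma:elliptic_bound} gives $\|\ue\|_{\L{3}} \leq Ct^{-3}$ which is not integrable, so instead I would interpolate $\ue$ between $L^{\uparam}(\Omega)$ (bounded up to $t=0$, since $u_0 \in L^\uparam$) and a higher power, exploiting that $\uparam > 1$ strictly improves the effective decay exponent below $1$.

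The main obstacle I expect is precisely this parameter juggling in the three-dimensional parabolic-elliptic case: one must verify that the decay exponent of the relevant $L^p(\Omega)$-norm of $\ue$, after interpolation against the $L^\uparam$-baseline, stays strictly below $1$ while simultaneously the exponent of $\grad\ze$ stays integrable near $t=0$, and that these are mutually compatible for some admissible $p$ --- this is where $\uparam \in (1,2)$ is genuinely used, and the inequality chain is delicate. The parabolic-parabolic case is less about a single hard inequality and more about correctly converting the two space-time integral bounds of \Cref{lemma:energy} (and \Cref{corollary:energy}) into a pointwise-in-time decay estimate with exponent $< 1$ via H\"older in time; the constraint $r > \frac65$ is exactly what buys the needed slack. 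Once the decay estimate $\|\ue(\cdot,s)\grad\ze(\cdot,s)\|_{\L{1}} \leq Cs^{-\mu}$ with $\mu = 1-\theta < 1$ is in hand in all scenarios, integrating from $0$ to $t$ gives $\frac{C}{\theta}t^\theta$ and the proof concludes.
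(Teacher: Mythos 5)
Your plan has a genuine gap in Scenario (\ref{scenario_1}): it silently assumes $\zeta>0$. Scenario (\ref{scenario_1}) only requires $\xi\gamma-\chi\alpha\geq 0$, and in the borderline case $\zeta=0$ the dissipative estimate from \Cref{lemma:energy} that you rely on, namely $\zeta\int_0^t s^{\lambda}\int_\Omega |\grad\ue|^2/\ue \leq C$, carries the factor $\zeta$ and is therefore vacuous (as is the entropy bound $\zeta\int_\Omega \ue\ln\ue\leq Ct^{-\lambda}$). Near $t=0$ there is then no uniform control of $\ue$ beyond the mass $\int_\Omega\ue=m$, so neither your Gagliardo--Nirenberg bound for $\|\ue\|_\L{p}$ with $p>1$ via the Fisher information nor your fallback pointwise-decay route can be carried out; pairing mass with $\grad\ze$ would require an $L^\infty$ bound on $\grad\ze$, which cannot be obtained by interpolating the $W^{1,r}$-baseline ($r<2$) of \Cref{lemma:baseline_parabolic} against the weighted $L^2$ control of $\laplace\ze$ in two dimensions. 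The paper therefore treats $\zeta=0$ separately: since the $\ze$-equation in (\ref{problem_simplified}) then no longer contains $\ue$, the variation-of-constants formula together with \Cref{corollary:v_l2} yields $\|\grad\ze(\cdot,t)\|_\L{\infty}\leq Ct^{-1/r}$, which is integrable because $r>1$, and this is simply multiplied by the conserved mass. Your argument needs this (or an equivalent) case distinction.

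For $\zeta>0$ your splitting is essentially the paper's: there the term $\|\ue\grad\ze\|_\L{1}$ is split by Young's inequality into $s^{-\frac{2}{3}\lambda-\frac{1}{3}\theta}\int_\Omega\ue^{4/3}$ plus $s^{2\lambda+\theta}\int_\Omega|\grad\ze|^4$ and then absorbed using \Cref{lemma:energy} and \Cref{corollary:energy}, exactly the bookkeeping you sketch. Scenario (\ref{scenario_2}) matches the paper's proof ($p=\frac52$ against $p'=\frac53$). In Scenario (\ref{scenario_3}) your pairing (keeping $\grad\ze$ at an exponent below $\frac32$ from the mass baseline and interpolating $\ue$ between $L^\uparam(\Omega)$ and a higher norm) is numerically consistent for $\uparam>1$, but it requires decay estimates for $\|\ue\|_\L{q}$ with $q>3$ that \Cref{lemma:elliptic_bound} does not provide as stated (its proof would have to be rerun for such exponents). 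The paper instead spends the $L^\uparam$ information on the gradient side: elliptic regularity and Sobolev embedding give a uniform bound on $\grad\ze$ in $L^{\frac{3\uparam}{3-\uparam}}(\Omega)$, which is paired with $\|\ue\|_{\L{\frac{3\uparam}{4\uparam-3}}}\leq Ct^{-\frac{3-\uparam}{2\uparam}}$, an exponent that is integrable precisely because $\uparam>1$; this keeps all required exponents within the list of \Cref{lemma:elliptic_bound}.
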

\begin{proof}
We begin by considering Scenario (\ref{scenario_1}) and more specifically we start by treating the simpler case of $\zeta$ being equal to zero. Here, we can use the variation-of-constants representation of the second equation in (\ref{problem_simplified}), smoothing properties of the Neumann heat semigroup (cf.\ \cite[Lemma 1.3 (ii)]{WinklerAggregationVsGlobal2010}), (\ref{eq:approx_vw0_properties}) combined with the Sobolev inequality and \Cref{corollary:v_l2} to find $K_1 > 0$ such that 
\begin{align*}
	\|\grad \ze(\cdot, t)\|_\L{\infty} &= \left\| \, \grad e^{t(\laplace - \delta)}\ze(\cdot, 0) + \sigma\int_0^t \grad e^{(t-s)(\laplace - \delta)} \ve(\cdot, s) \d s \,\right\|_{\L{\infty}} \\
	&\leq K_1 (1 + t^{-\frac{1}{2} - \frac{2-r}{2r}}) + |\sigma| K_1 \int_0^t (1 + (t-s)^{-\frac{1}{2} - \frac{2-r}{2r}}) \d s \\ 
	&\leq 2K_1 t^{-\frac{1}{r}} + 2|\sigma| K_1 \int_0^t (t-s)^{-\frac{1}{r}} \d s \\ 
	&= 2K_1 t^{-\frac{1}{r}} +  2\tfrac{r}{r-1}|\sigma| K_1 t^\frac{r-1}{r} \leq 2(1+ |\sigma|)\tfrac{r}{r-1}K_1t^{-\frac{1}{r}}
\end{align*}
for all $t\in(0,1)$ and $\eps \in (0,1)$ as $\zeta = 0$. Due to the mass conservation property (\ref{eq:mass_conservation}), this directly implies 
\begin{align*}
	\int_0^t\|\ue(\cdot, s) \grad \ze(\cdot, s)\|_\L{1} \d s &\leq m \int_0^t\|\grad \ze(\cdot,s)\|_\L{\infty} \d s \\ 
	&\leq 2m(1+ |\sigma|)\tfrac{r}{r-1}K_1 \int_0^t s^{-\frac{1}{r}} \d s \\	 
	&= 2m(1+ |\sigma|)(\tfrac{r}{r-1})^2 K_1 t^\frac{r-1}{r}
\end{align*}
for all $t\in(0,1)$ and $\eps \in (0,1)$, which sufficiently addresses this case as $r > 1$.
\\[0.5em]
Let us now consider the more subtle case of $\zeta$ being positive in Scenario (\ref{scenario_1}). Here, we first fix $\lambda \in (0, \frac{2}{3})$ as in \Cref{lemma:energy} and an appropriately small constant $\theta \in (0,1)$ such that
\[
	-\frac{3}{2}\lambda - \theta > -1 + \theta \stext{ as well as }-\frac{2}{3}\lambda - \frac{1}{3}\theta > -1 + \theta.	
\]
Using Young's inequality, we then see that 
\begin{equation}\label{eq:t0_continuity_estimate_parab}
	\|\ue(\cdot, t) \grad \ze(\cdot, t)\|_\L{1} \leq t^{-\frac{2}{3}\lambda-\frac{1}{3}\theta}\int_\Omega \ue^{\frac{4}{3}} + t^{2\lambda+\theta}\int_\Omega |\grad \ze|^4
\end{equation}
for all $t\in(0,1)$ and $\eps \in (0,1)$.
Due to \Cref{corollary:energy}, we already know that there exists $K_2 > 0$ such that 
\begin{equation}\label{eq:t0_continuity_estimate_gradv}
	\int_0^t s^{2\lambda+\theta}\int_\Omega |\grad \ze(x, s)|^4 \d x \d s \leq t^\theta\int_0^t s^{2\lambda}\int_\Omega |\grad \ze(x, s)|^4 \d x \d s \leq K_2 t^{\theta} 
\end{equation}
for all $t\in(0,1)$ and $\eps \in (0,1)$. As such, we can now focus on the remaining term involving $\ue$ in (\ref{eq:t0_continuity_estimate_parab}). To this end, we use the Gagliardo--Nirenberg inequality as well as the mass conservation property (\ref{eq:mass_conservation}) to gain $K_3 > 0$ such that 
\begin{align*}
	t^{-\frac{2}{3}\lambda-\frac{1}{3}\theta}\int_\Omega \ue^{\frac{4}{3}} &= t^{-\frac{2}{3}\lambda-\frac{1}{3}\theta} \|\ue^\frac{1}{2}\|^\frac{8}{3}_\L{\frac{8}{3}} \leq K_3 t^{-\frac{2}{3}\lambda-\frac{1}{3}\theta}\left(\int_\Omega \frac{|\grad \ue|^2}{\ue} \right)^\frac{1}{3} + K_3t^{-\frac{2}{3}\lambda-\frac{1}{3}\theta} \\
	&= K_3 t^{-\lambda-\frac{2}{3}\theta}\left( t^{\lambda + \theta}\int_\Omega \frac{|\grad \ue|^2}{\ue} \right)^\frac{1}{3} + K_3t^{-\frac{2}{3}\lambda-\frac{1}{3}\theta} \\
	&\leq K_3 t^{\lambda + \theta}\int_\Omega \frac{|\grad \ue|^2}{\ue} + K_3 t^{-\frac{3}{2}\lambda - \theta} + K_3t^{-\frac{2}{3}\lambda-\frac{1}{3}\theta} \\
	&\leq K_3 t^{\lambda + \theta}\int_\Omega \frac{|\grad \ue|^2}{\ue} + 2K_3t^{-1+\theta}
\end{align*}
for all $t\in(0,1)$ and $\eps \in (0,1)$, where the last step is facilitated by our choice of $\theta$. Thus by application of \Cref{lemma:energy}, we gain $K_4 > 0$ such that
\begin{align*}
	\int_0^t s^{-\frac{2}{3}\lambda-\frac{1}{3}\theta}\int_\Omega \ue^{\frac{4}{3}}(x,s) \d x \d s 
	&\leq K_3 \int_0^t s^{\lambda + \theta}\int_\Omega \frac{|\grad \ue(x,s)|^2}{\ue(x,s)} \d x \d s + 2K_3\int_0^t s^{-1+\theta} \d s \\
	&\leq K_3 t^\theta \int_0^t s^{\lambda}\int_\Omega \frac{|\grad \ue(x,s)|^2}{\ue(x,s)} \d x \d s + 2K_3\int_0^t s^{-1+\theta} \d s \\
	&\leq K_3 K_4 t^{\theta} + \frac{2K_3}{\theta} t^\theta 
\end{align*}
for all $t\in(0,1)$ and $\eps \in (0,1)$ as $\zeta > 0$. Combining this with the similar estimate (\ref{eq:t0_continuity_estimate_gradv}) and the estimate (\ref{eq:t0_continuity_estimate_parab}) then directly yields our desired result for Scenario (\ref{scenario_1}) with $\zeta > 0$.
\\[0.5em]
We next look at Scenario (\ref{scenario_2}). Using \Cref{lemma:elliptic_bound} and \Cref{lemma:baseline_elliptic}, we here immediately gain $K_5 > 0$ such that 
\begin{align*}
	\int_0^t \| \ue(\cdot, s) \grad \ze(\cdot, s)\|_\L{1} \d s &\leq \int_0^t \| \ue(\cdot, s) \|_\L{\frac{5}{2}} \|\grad \ze(\cdot, s)\|_\L{\frac{5}{3}} \d s \leq K_5\int_0^t s^{-\frac{3}{2}\cdot\frac{2}{5}} \d s = \frac{5K_5}{2} \, t^\frac{2}{5}
\end{align*}
for all $t \in (0,1)$ and $\eps \in (0,1)$, which completes the proof for this case.
\\[0.5em]
Lastly, we will handle Scenario (\ref{scenario_3}) as follows: According to \Cref{lemma:elliptic_bound}, there exists a uniform bound for $\ue$ in $L^\uparam(\Omega)$ on the time interval $(0,1)$ and thus by elliptic regularity theory there further exist uniform bounds for $\ve$, $\we$ and therefore $\ze$ in $W^{2,\uparam}(\Omega)$ on the same time interval. Using the Sobolev inequality, this means we can fix $K_6 > 0$ such that 
\[
	\|\grad \ze(\cdot,t)\|_\L{\frac{3\uparam}{3-\uparam}} \leq K_6
\]
for all $t\in (0,1)$ and $\eps \in (0,1)$. Having established this, we can then use \Cref{lemma:elliptic_bound} to find $K_7 > 0$ such that
\begin{align*}
	\int_0^t \| \ue(\cdot, s) \grad \ze(\cdot, s)\|_\L{1} \d s 
	&\leq \int_0^t \| \ue(\cdot, s) \|_\L{\frac{3\uparam}{4\uparam-3}} \|\grad \ze(\cdot, s)\|_\L{\frac{3\uparam}{3-\uparam}} \d s \\
	&\leq K_6K_7 \int_0^t s^{-\frac{3-\uparam}{2\uparam}} \d s = \frac{2\uparam K_5 K_6}{3\uparam-3} \, t^{\frac{3\uparam-3}{2\uparam}}
\end{align*}
for all $t\in (0,1)$ and $\eps \in (0,1)$. As $\frac{3\uparam-3}{2\uparam} > 0$, this in fact completes the proof.
\end{proof}
\noindent
Using the fundamental theorem of calculus and the first equation in (\ref{problem_simplified}), we now derive the following uniform continuity property for our approximate solutions from the above result.

\begin{lemma}\label{lemma:t0_ue_continuity}
Assume we are in Scenario (\ref{scenario_1}), Scenario (\ref{scenario_2}) with $C_{\mathrm{S2}}$ as in \Cref{lemma:elliptic_bound} or in Scenario (\ref{scenario_3}). Then for each $\phi \in C^0(\overline{\Omega})$ and $\eta > 0$, there exists $t_0 \equiv t_0(\phi, \eta) \in (0,1)$ such that 
\[
	\left| \int_\Omega \ue(\cdot, t) \phi - \int_\Omega u_{0, \eps}\phi  \right| \leq \eta
\]
for all $t \in (0,t_0)$ and $\eps \in (0,1)$.
\end{lemma}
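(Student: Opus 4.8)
The plan is to combine the fundamental theorem of calculus, applied to $s\mapsto\int_\Omega\ue(\cdot,s)\phi$, with the space-time smallness estimate from \Cref{lemma:uniform_u_continuity_help}, after first reducing to test functions that are smooth and satisfy the homogeneous Neumann condition, so that the integrations by parts produce no boundary terms.

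First I would treat the claim with $\phi$ replaced by some $\psi\in C^2(\overline{\Omega})$ satisfying $\grad\psi\cdot\nu=0$ on $\partial\Omega$. For such $\psi$ and any fixed $\eps\in(0,1)$, the map $s\mapsto\int_\Omega\ue(\cdot,s)\psi$ is continuous on $[0,\infty)$ since $\ue\in C^0(\overline{\Omega}\times[0,\infty))$, equals $\int_\Omega u_{0,\eps}\psi$ at $s=0$, and is continuously differentiable on $(0,\infty)$ with, using the first equation in (\ref{problem_simplified}) together with two integrations by parts in which every boundary contribution vanishes thanks to $\grad\ue\cdot\nu=\grad\ze\cdot\nu=\grad\psi\cdot\nu=0$,
\[
	\frac{\d}{\d s}\int_\Omega \ue(\cdot,s)\psi = \int_\Omega \ue(\cdot,s)\laplace\psi - \int_\Omega \ue(\cdot,s)\grad\ze(\cdot,s)\cdot\grad\psi .
\]
By the mass identity (\ref{eq:mass_conservation}) this derivative is bounded in modulus by $m\|\laplace\psi\|_\L{\infty}+\|\grad\psi\|_\L{\infty}\|\ue(\cdot,s)\grad\ze(\cdot,s)\|_\L{1}$, which is integrable near $s=0$ by \Cref{lemma:uniform_u_continuity_help}. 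Letting $a\searrow0$ in the identity $\int_\Omega\ue(\cdot,t)\psi-\int_\Omega\ue(\cdot,a)\psi=\int_a^t\frac{\d}{\d s}\int_\Omega\ue(\cdot,s)\psi\,\d s$ (valid on $[a,t]$ since $\ue\in C^{2,1}$ there) and invoking dominated convergence yields
\[
	\left|\int_\Omega\ue(\cdot,t)\psi-\int_\Omega u_{0,\eps}\psi\right| \leq m\|\laplace\psi\|_\L{\infty}\,t + \|\grad\psi\|_\L{\infty}\int_0^t\|\ue(\cdot,s)\grad\ze(\cdot,s)\|_\L{1}\,\d s \leq m\|\laplace\psi\|_\L{\infty}\,t + C\|\grad\psi\|_\L{\infty}\,t^{\theta}
\]
for all $t\in(0,1)$ and $\eps\in(0,1)$, with $C,\theta>0$ from \Cref{lemma:uniform_u_continuity_help}. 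Since the right-hand side tends to $0$ as $t\searrow0$ uniformly in $\eps$, the claim holds for every such $\psi$.

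To pass to a general $\phi\in C^0(\overline{\Omega})$, I would use that (\ref{eq:mass_conservation}) and (\ref{eq:approx_u0_properties}) give $\int_\Omega\ue(\cdot,t)=\int_\Omega u_{0,\eps}=m$ with $\ue,u_{0,\eps}\geq0$, so that for any admissible $\psi$
\[
	\left|\int_\Omega\ue(\cdot,t)\phi-\int_\Omega u_{0,\eps}\phi\right| \leq \left|\int_\Omega\ue(\cdot,t)\psi-\int_\Omega u_{0,\eps}\psi\right| + 2m\|\phi-\psi\|_\L{\infty}.
\]
Given $\eta>0$, I would first pick $\psi\defs e^{s_0\laplace}\phi$ for a sufficiently small $s_0>0$: this lies in $C^\infty(\overline{\Omega})$, satisfies $\grad\psi\cdot\nu=0$ on $\partial\Omega$, and satisfies $\|\phi-\psi\|_\L{\infty}\leq\frac{\eta}{4m}$ by the standard continuity of the Neumann heat semigroup on $C^0(\overline{\Omega})$ at time zero; then I would choose $t_0\equiv t_0(\phi,\eta)\in(0,1)$ so small that $m\|\laplace\psi\|_\L{\infty}\,t_0+C\|\grad\psi\|_\L{\infty}\,t_0^{\theta}\leq\frac{\eta}{2}$. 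Combining the last two displays gives the asserted bound for all $t\in(0,t_0)$ and $\eps\in(0,1)$.

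The substance of the argument is already contained in \Cref{lemma:uniform_u_continuity_help}; the only points needing mild care are the reduction to test functions with vanishing normal derivative — so that the integrations by parts produce no boundary integrals, which we could not control uniformly in $\eps$ near $t=0$ — and the justification of the fundamental theorem of calculus down to $t=0$, both of which are routine.
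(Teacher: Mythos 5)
Your proposal is correct and follows essentially the same route as the paper: reduce to a smooth test function $\psi$ with $\grad\psi\cdot\nu=0$, use the fundamental theorem of calculus with the first equation of (\ref{problem_simplified}), integrate by parts, and control the taxis term via \Cref{lemma:uniform_u_continuity_help}, then transfer to general $\phi\in C^0(\overline{\Omega})$ by a mass-conservation triangle inequality. The only cosmetic difference is that you build $\psi$ explicitly as $e^{s_0\laplace}\phi$ (and split $\eta$ slightly differently), whereas the paper simply invokes density of Neumann-compatible $C^2(\overline{\Omega})$ functions in $C^0(\overline{\Omega})$; both are fine.
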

\begin{proof}
	Let $\eta > 0$ and $\phi \in C^0(\overline{\Omega})$ be fixed but arbitrary.
	\\[0.5em]
	As $C^2(\overline{\Omega})$ with Neumann zero is dense in $C^0(\overline{\Omega})$, we can further fix  $\psi \in C^2(\overline{\Omega})$ with $\grad \psi \cdot \nu = 0$ on $\partial \Omega$ and 
	\begin{equation}\label{eq:psi_property}
		\|\phi - \psi\|_\L{\infty} \leq \frac{\eta}{3m}.
	\end{equation}
	Using the fundamental theorem of calculus, the first equation in (\ref{problem_simplified}), partial integration and \Cref{lemma:uniform_u_continuity_help}, we can then fix $K_1 > 0$ and $\theta > 0$ such that 
	\begin{align*}
		\left| \int_\Omega \ue(\cdot, t)\psi - \int_\Omega u_{0, \eps} \psi \right| &= \left| \int_0^t \int_\Omega u_{\eps t}\psi \right|
		= \left| \int_0^t \int_\Omega \laplace \ue \psi + \int_0^t\int_\Omega \div (\ue \grad \ze) \psi \right| \\
		&= \left| \int_0^t \int_\Omega \ue \laplace \psi - \int_0^t\int_\Omega (\ue \grad \ze) \cdot \grad\psi \right| \\
		&\leq m\|\laplace \psi \|_\L{\infty}t + \|\grad \psi\|_\L{\infty}\int_0^t \|\ue(\cdot, s) \grad \ze(\cdot, s)\|_\L{1} \d s \\
		&\leq m\|\laplace \psi \|_\L{\infty}t + K_1 \|\grad \psi\|_\L{\infty} t^\theta 
	\end{align*}
	for all $t\in(0,1)$ and $\eps \in (0,1)$. Thus we can find a sufficiently small $t_0 > 0$ such that 
	\[
		\left| \int_\Omega \ue(\cdot, t)\psi - \int_\Omega u_{\eps, 0} \psi \right| \leq \frac{\eta}{3}
	\]
	for all $t\in(0,t_0)$ and $\eps \in (0,1)$.
	\\[0.5em]
	From this as well as (\ref{eq:psi_property}), we can further conclude that 
	\begin{align*}
		\left| \int_\Omega \ue(\cdot, t) \phi - \int_\Omega u_{0, \eps}\phi \right| &\leq \left| \int_\Omega \ue(\cdot, t) \phi - \int_\Omega \ue(\cdot, t) \psi \right| \\
		&\hphantom{=}+ \left|\int_\Omega \ue(\cdot, t) \psi - \int_\Omega u_{0,\eps} \psi \right| \\
		&\hphantom{=}+\left|\int_\Omega u_{0,\eps} \psi - \int_\Omega u_{0,\eps} \phi \right| \\
		&\leq m \frac{\eta}{3m} + \frac{\eta}{3} + m \frac{\eta}{3m} = \eta
	\end{align*}
	for all $t\in(0,t_0)$ and $\eps \in (0,1)$, which is exactly our desired result.
\end{proof}
\noindent
We now only need to argue that the above property survives the limit process undertaken in \Cref{lemma:construction} to gain the first of the continuity properties at $t=0$ for our solution candidates.
\begin{lemma}\label{lemma:t0_u_continuity}
	Assume we are in Scenario (\ref{scenario_1}), Scenario (\ref{scenario_2}) with $C_{\mathrm{S2}}$ as in \Cref{lemma:elliptic_bound} or in Scenario (\ref{scenario_3}). If $u$ is the function constructed in \Cref{lemma:construction}, then 
	\[
		u(\cdot, t) \rightarrow u_0 \stext{ in } \Mp  	
	\]
	as $t\searrow 0$, where $\Mp$ is the set of positive Radon measures with the vague topology and we interpret the functions $u(\cdot, t)$, $t > 0$, as the positive Radon measures $ u(x,t)\mathrm{d} x$ with $\mathrm{d} x$ being the standard Lebesgue measure on $\Omega$.
\end{lemma}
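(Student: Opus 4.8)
The plan is to deduce this from the uniform-in-$\eps$ continuity of the approximate solutions established in \Cref{lemma:t0_ue_continuity}, the locally uniform convergence $\ue \to u$ from \Cref{lemma:construction}, and the convergence $u_{0,\eps} \to u_0$ in $\Mp$ recorded in (\ref{eq:approx_u0_properties}). Since the vague topology on $\Mp$ is generated by testing against $C^0(\overline{\Omega})$ and $\overline{\Omega}$ is compact, it suffices to fix an arbitrary $\phi \in C^0(\overline{\Omega})$ and $\eta > 0$ and to exhibit some $t_0 \in (0,1)$ such that $\left| \int_\Omega u(\cdot,t)\phi - \int_{\overline{\Omega}}\phi\d u_0 \right| \leq \eta$ for all $t \in (0,t_0)$, where we interpret $u(\cdot,t)$ as the measure $u(x,t)\d x$ as in the statement.

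First I would apply \Cref{lemma:t0_ue_continuity} with this $\phi$ and with $\eta/3$ in place of $\eta$ to obtain $t_0 \equiv t_0(\phi, \eta/3) \in (0,1)$ with
\[
	\left| \int_\Omega \ue(\cdot, t)\phi - \int_\Omega u_{0,\eps}\phi \right| \leq \frac{\eta}{3}
\]
for all $t \in (0,t_0)$ and all $\eps \in (0,1)$; the crucial point is that $t_0$ does not depend on $\eps$. Then for a fixed $t \in (0,t_0)$ and along the null sequence $(\eps_j)_{j\in\N}$ from \Cref{lemma:construction} I would estimate, via the triangle inequality,
\[
	\left| \int_\Omega u(\cdot, t)\phi - \int_{\overline{\Omega}}\phi\d u_0 \right| \leq \left| \int_\Omega (u - u_{\eps_j})(\cdot, t)\phi \right| + \left| \int_\Omega u_{\eps_j}(\cdot, t)\phi - \int_\Omega u_{0,\eps_j}\phi \right| + \left| \int_\Omega u_{0,\eps_j}\phi - \int_{\overline{\Omega}}\phi\d u_0 \right|.
\]
The middle term is bounded by $\eta/3$ for every $j$ by the choice of $t_0$; the first term tends to $0$ as $j \to \infty$ because (\ref{eq:u_convergence}), applied on any compact time interval containing $t$, yields uniform convergence $u_{\eps_j}(\cdot,t) \to u(\cdot,t)$ on $\overline{\Omega}$; and the last term tends to $0$ as $j \to \infty$ by (\ref{eq:approx_u0_properties}). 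Since the left-hand side is independent of $j$, taking $\limsup_{j\to\infty}$ on the right yields $\left| \int_\Omega u(\cdot,t)\phi - \int_{\overline{\Omega}}\phi\d u_0 \right| \leq \eta/3 < \eta$, which is exactly the claim.

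I do not expect a genuine obstacle here: the argument is a routine passage to the limit in a three-term triangle inequality. The only delicate point — and the reason \Cref{lemma:t0_ue_continuity} was phrased with an $\eps$-independent $t_0$ — is that the limit $\eps_j \to 0$ is taken at a fixed small $t$ rather than simultaneously with $t \searrow 0$, so without the uniformity in $\eps$ of the smallness near $t=0$ the middle term could not be controlled after passing to the limit. Everything else (compactness of $\overline{\Omega}$, the definition of the vague topology, locally uniform convergence on compact time slabs) is standard.
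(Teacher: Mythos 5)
Your argument is correct and follows essentially the same route as the paper: the identical three-term triangle-inequality decomposition, with the middle term controlled uniformly in $\eps$ by \Cref{lemma:t0_ue_continuity} and the outer terms handled via (\ref{eq:approx_u0_properties}) and the convergence in \Cref{lemma:construction}. The only cosmetic difference is that you take $\limsup_{j\to\infty}$ along the null sequence, whereas the paper simply selects, for each fixed $t$, one $\eps(t)$ making both outer terms at most $\eta/3$; both are valid.
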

\begin{proof}
	Let $\eta > 0$ and $\phi \in C^0(\overline{\Omega})$ be fixed but arbitrary.
	\\[0.5em]
	According to \Cref{lemma:t0_ue_continuity}, we can then fix $t_0 > 0$ such that 
	\[
		\left| \int_\Omega \ue(\cdot, t) \phi - \int_\Omega u_{0,\eps}\phi  \right| \leq \frac{\eta}{3}
	\]
	for all $t\in(0,t_0)$ and $\eps \in (0,1)$.
	\\[0.5em]
	Using (\ref{eq:approx_u0_properties}) as well as \Cref{lemma:construction}, we can further, for each $t\in(0,t_0)$, find $\eps(t) \in (0,1)$, such that  
	\[
		\left|\int_\Omega u_{0, \eps(t)}\phi - \int_{\overline{\Omega}} \phi \d u_0 \right| \leq \frac{\eta}{3} \stext{ and } \left| \int_\Omega u(\cdot, t)\phi - \int_\Omega u_{\eps(t)}(\cdot, t)\phi\right| \leq \frac{\eta}{3}.
	\]
    Combining the above estimates then yields
	\begin{align*}
		\left| \int_\Omega u(\cdot, t)\phi - \int_{\overline{\Omega}} \phi \d u_0  \right| &\leq \left| \int_\Omega u(\cdot, t)\phi - \int_\Omega u_{\eps(t)}(\cdot, t)\phi \right| \\
		&\hphantom{=}+ \left| \int_\Omega u_{\eps(t)}(\cdot, t)\phi - \int_\Omega u_{0,\eps(t)}\phi  \right| \\
		&\hphantom{=}+ \left|\int_\Omega u_{0,\eps(t)}\phi - \int_{\overline{\Omega}} \phi \d u_0  \right| \\
		&\leq \frac{\eta}{3} + \frac{\eta}{3} + \frac{\eta}{3} = \eta 
	\end{align*}
	for all $t\in(0,t_0)$. This completes the proof.
\end{proof}
\noindent
As our last significant step of this paper, we will now treat the remaining continuity properties at $t=0$ for the parabolic-parabolic case. Similar to our prior arguments in this section, we will do this by first showing that the approximate solutions are uniformly continuous at $t=0$ by way of semigroup methods and then translate this continuity property to our limit solutions.

\begin{lemma}\label{lemma:t0_vw_continuity}
	Assume we are in Scenario (\ref{scenario_1}). If $v$, $w$ are the functions constructed in \Cref{lemma:construction}, then 
	\[
		v(\cdot, t) \rightarrow v_0 \stext{ and } w(\cdot, t) \rightarrow w_0 \stext{ in } W^{1,r}(\Omega) 	
	\]
	as $t\searrow 0$ with $r\in(\frac{6}{5}, 2)$ as fixed at the beginning of \Cref{section:apriori}.	
\end{lemma}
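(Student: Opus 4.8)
The plan is to first establish a uniform-in-$\eps$ continuity estimate for the approximate solutions $\ve$, $\we$ at $t=0$ in $W^{1,r}(\Omega)$, and then to transfer it to the limit functions $v$, $w$ by the same three-term triangle inequality used in the proof of \Cref{lemma:t0_u_continuity}. I will only spell out the argument for $\ve$ and $v$; the one for $\we$ and $w$ is identical with $\beta$, $\alpha$ replaced by $\delta$, $\gamma$.

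\emph{Step 1: uniform continuity of $\ve$ at $t=0$.} I would show that for every $\eta > 0$ there is $t_* \in (0,1)$ with $\|\ve(\cdot,t) - v_{0,\eps}\|_{W^{1,r}(\Omega)} \le \eta$ for all $t \in (0,t_*)$ and all $\eps \in (0,1)$. Starting from the variation-of-constants representation
\[
  \ve(\cdot,t) = e^{t(\laplace - \beta)} v_{0,\eps} + \alpha \int_0^t e^{(t-s)(\laplace-\beta)} \ue(\cdot,s) \d s,
\]
I split $\ve(\cdot,t) - v_{0,\eps}$ into the Duhamel term and the initial-data term $\bigl(e^{t(\laplace-\beta)} - \mathrm{id}\bigr) v_{0,\eps}$. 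For the Duhamel term, the smoothing estimates for the Neumann heat semigroup (cf.\ \cite[Lemma 1.3]{WinklerAggregationVsGlobal2010}) together with the mass conservation property (\ref{eq:mass_conservation}) give
\[
  \Bigl\| \alpha \int_0^t e^{(t-s)(\laplace-\beta)} \ue(\cdot,s) \d s \Bigr\|_{W^{1,r}(\Omega)} \le C m \int_0^t \bigl( 1 + (t-s)^{\frac1r - \frac32} \bigr) e^{-\beta(t-s)} \d s \le C' \bigl( t + t^{\frac1r - \frac12} \bigr),
\]
which tends to $0$ as $t \searrow 0$ uniformly in $\eps$ because $r < 2$ forces $\tfrac1r - \tfrac12 > 0$. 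For the initial-data term I would use the convenient structure of the regularized data fixed in (\ref{eq:v0_w0_definition}), namely $v_{0,\eps} = e^{\eps(\laplace-\beta)} v_0$, which turns it into a difference along a single semigroup orbit,
\[
  \bigl( e^{t(\laplace-\beta)} - \mathrm{id} \bigr) v_{0,\eps} = e^{(t+\eps)(\laplace-\beta)} v_0 - e^{\eps(\laplace-\beta)} v_0.
\]
Since the Neumann heat semigroup is strongly continuous on $W^{1,r}(\Omega)$ — the same fact underlying (\ref{eq:approx_vw0_properties}) — the map $\sigma \mapsto e^{\sigma(\laplace-\beta)} v_0$ is continuous, hence uniformly continuous, on the compact interval $[0,2]$; so for $t$ small enough (and $t < 1$, so that $t + \eps < 2$) this difference has $W^{1,r}(\Omega)$-norm at most $\eta/2$ simultaneously for all $\eps \in (0,1)$. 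Combining the two estimates proves Step 1. (Alternatively, one may note that $\{v_{0,\eps}\}_{\eps \in (0,1)}$ is relatively compact in $W^{1,r}(\Omega)$ by (\ref{eq:approx_vw0_properties}) and use that $e^{t(\laplace-\beta)} \to \mathrm{id}$ strongly, hence uniformly on compact sets.)

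\emph{Step 2: passage to the limit.} This repeats the bookkeeping of \Cref{lemma:t0_u_continuity}. Given $\eta > 0$, fix $t_*$ from Step 1 so that $\|\ve(\cdot,t) - v_{0,\eps}\|_{W^{1,r}(\Omega)} \le \eta/3$ for all $t \in (0,t_*)$ and $\eps \in (0,1)$. For each such $t$, using (\ref{eq:approx_vw0_properties}) and the convergence $\ve \to v$ in $C^{2,\tau}(\overline{\Omega} \times [\tfrac t2, 1])$ from \Cref{lemma:construction} (which in particular yields $v_{\eps_j}(\cdot,t) \to v(\cdot,t)$ in $C^2(\overline{\Omega}) \hookrightarrow W^{1,r}(\Omega)$ along the null sequence), I pick $\eps(t)$ in that null sequence with $\|v_{0,\eps(t)} - v_0\|_{W^{1,r}(\Omega)} \le \eta/3$ and $\|v_{\eps(t)}(\cdot,t) - v(\cdot,t)\|_{W^{1,r}(\Omega)} \le \eta/3$. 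The triangle inequality then gives $\|v(\cdot,t) - v_0\|_{W^{1,r}(\Omega)} \le \eta$ for all $t \in (0,t_*)$, which is the claim for $v$; the same argument with $\we$, $w_{0,\eps}$, $w_0$ gives the claim for $w$.

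The only point that needs genuine care is the uniform-in-$\eps$ smallness of the initial-data term $\bigl( e^{t(\laplace-\beta)} - \mathrm{id} \bigr) v_{0,\eps}$: without the structural identity $v_{0,\eps} = e^{\eps(\laplace-\beta)} v_0$ (equivalently, without relative compactness of $(v_{0,\eps})_{\eps \in (0,1)}$ in $W^{1,r}(\Omega)$) this uniformity could fail, even though for each fixed $\eps$ the term clearly vanishes as $t \searrow 0$. Everything else — the semigroup smoothing estimate and the triangle-inequality bookkeeping — is routine.
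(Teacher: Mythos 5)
Your proposal is correct and follows essentially the same route as the paper: the same variation-of-constants splitting, the same semigroup smoothing bound for the Duhamel term (vanishing like $t + t^{\frac1r-\frac12}$ since $r<2$), exploitation of the structural identity $v_{0,\eps}=e^{\eps(\laplace-\beta)}v_0$ for the initial-data term, and the same three-term triangle-inequality passage to the limit as in \Cref{lemma:t0_u_continuity}. The only cosmetic difference is that the paper handles the initial-data term by factoring out $e^{\eps(\laplace-\beta)}$ and using its uniform boundedness on $W^{1,r}(\Omega)$, whereas you merge the exponents and invoke uniform continuity of the orbit $\sigma\mapsto e^{\sigma(\laplace-\beta)}v_0$ on a compact interval; both are valid and rest on the same observation.
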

\begin{proof}
	Let $\eta > 0$ be fixed but arbitrary.
	\\[0.5em]
	Using the variation-of-constants representation of the second equation in (\ref{problem}), we first note that 
	\begin{align*}
		\|\ve(\cdot, t) - v_{0,\eps}\|_{W^{1,r}(\Omega)} &\leq  \|e^{t(\laplace-\beta)}v_{0,\eps} - v_{0,\eps}\|_{W^{1,r}(\Omega)} + \alpha\int_0^t\|e^{(t-s)(\laplace - \beta)}\ue(\cdot, s)\|_{W^{1,r}(\Omega)} \d s 
	\end{align*}
	for all $t \in (0,1)$ and $\eps \in (0,1)$.
	\\[0.5em]
	To treat the first summand, we recall our definition of $v_{0,\eps}$ in (\ref{eq:v0_w0_definition}) and standard properties of the Neumann heat semigroup to gain $K_1 > 0$ such that 
	\begin{align*}
		\|e^{t(\laplace-\beta)}v_{0,\eps} - v_{0,\eps}\|_{W^{1,r}(\Omega)}
		&= \|e^{t(\laplace-\beta)}e^{\eps(\laplace - \beta)}v_0 - e^{\eps(\laplace - \beta)}v_0\|_{W^{1,r}(\Omega)}  
		= \|e^{\eps(\laplace - \beta)} (e^{t(\laplace-\beta)}v_0 - v_0 )\|_{W^{1,r}(\Omega)}  \\
		&\leq K_1 \| e^{t(\laplace-\beta)}v_0 - v_0  \|_{W^{1,r}(\Omega)} 
	\end{align*}
	for all $t\in(0,1)$ and $\eps \in (0,1)$. Due to further continuity properties of the Neumann heat semigroup and the fact that $v_0 \in W^{1,r}(\Omega)$, this then allows us to fix $t_1 > 0$ such that 
	\[
		\|e^{t(\laplace-\beta)}v_{0,\eps} - v_{0,\eps}\|_{W^{1,r}(\Omega)} \leq \frac{\eta}{6}
	\]
	for all $t\in(0, t_1)$ and $\eps \in (0,1)$.
	\\[0.5em]
	Regarding the second summand, we begin by employing the smoothing properties of the Neumann heat semigroup (cf.\ \cite[Lemma 1.3]{WinklerAggregationVsGlobal2010}) to gain $K_2 > 0$ such that 
	\[
		\int_0^t\|e^{(t-s)(\laplace - \beta)}\ue(\cdot, s)\|_{W^{1,r}(\Omega)} \d s \leq K_2 \int_0^t ( 1+ (t-s)^{-\frac{3}{2} + \frac{1}{r}}) \|\ue(\cdot, s)\|_\L{1} \d s = mK_2t + \tfrac{m K_2}{\frac{1}{r} - \frac{1}{2}} \, t^{\frac{1}{r} - \frac{1}{2}}  
	\]
	for all $t \in (0,1)$ and $\eps \in (0,1)$. As $r \in (\frac{6}{5},2)$, we can therefore fix $t_2 > 0$ such that 
	\[
		\alpha\int_0^t\|e^{(t-s)(\laplace - \beta)}\ue(\cdot, s)\|_{W^{1,r}(\Omega)} \leq \frac{\eta}{6}
	\]
	for all $t\in (0,t_2)$ and $\eps \in (0,1)$. Thus 
	\begin{equation}\label{eq:ve_uniform_continuity}
		\|\ve(\cdot, t) - v_{0,\eps}\|_{W^{1,r}(\Omega)} \leq \frac{\eta}{3}
	\end{equation}
	for all $t \in (0,t_0)$ and $\eps \in (0,1)$ with $t_0 \defs \min(t_1, t_2)$.
	\\[0.5em]
	Due to (\ref{eq:approx_vw0_properties}) and \Cref{lemma:construction}, we can, for each $t \in (0,1)$, further find $\eps(t) \in (0,1)$ such that 
	\[
		\|v_{0,\eps(t)} - v_0\|_{W^{1,r}(\Omega)} \leq \frac{\eta}{3} \stext{ and } 	\|v(\cdot, t) - v_{\eps(t)}(\cdot, t)\|_{W^{1,r}(\Omega)} \leq \frac{\eta}{3}.
	\]
 	Using these estimates as well as (\ref{eq:ve_uniform_continuity}), we then observe that 
	\begin{align*}
		\|v(\cdot, t) - v_0\|_{W^{1,r}(\Omega)} &\leq \|v(\cdot, t) - v_{\eps(t)}(\cdot, t)\|_{W^{1,r}(\Omega)} \\
		&\hphantom{=}+ \|v_{\eps(t)}(\cdot, t) - v_{0,\eps(t)}\|_{W^{1,r}(\Omega)} \\
		&\hphantom{=}+ \|v_{0,\eps(t)} - v_0\|_{W^{1,r}(\Omega)} \\
		&\leq \frac{\eta}{3} + \frac{\eta}{3} + \frac{\eta}{3} = \eta
	\end{align*}
	for all $t\in(0,t_0)$. Therefore, we have proven our desired result for $v$. Essentially the same argument applied to $w$ then completes the proof.
\end{proof}
\noindent
As we have already given all the necessary arguments in previous lemmata, the proof of our main theorem can now be given quite succinctly.
\begin{proof}[Proof of \Cref{theorem:main}]
	Let $u$, $v$, $w$ be the functions constructed in \Cref{lemma:construction}. By construction, said functions are nonnegative classical solutions to (\ref{problem}) of appropriate regularity and further, by \Cref{lemma:t0_u_continuity} and \Cref{lemma:t0_vw_continuity}, they have our desired continuity properties at $t = 0$. 
\end{proof}

\section*{Acknowledgment} The author acknowledges support of the \emph{Deutsche Forschungsgemeinschaft} in the context of the project \emph{Fine structures in interpolation inequalities and application to parabolic problems}, project number 462888149.

\end{document}